\newfont{\bb}{msbm10 at 11pt}
\newfont{\bbsmall}{msbm8 at 8pt}
\def\rth{\mathbb{R}^3}
\def\R{\mathbb{R}}
\def\N{\mathbb{N}}
\def\Z{\mathbb{Z}}
\def\C{\mathbb{C}}
\def\Hip{\mathbb{H}}
\def\D{\mathbb{D}}
\def\esf{\mathbb{S}}
\newcommand{\la}{\looparrowright}
\newcommand{\ben}{\begin{enumerate}}
\newcommand{\bit}{\begin{itemize}}
\newcommand{\een}{\end{enumerate}}
\newcommand{\eit}{\end{itemize}}
\newcommand{\wh}{\widehat}
\newcommand{\Int}{\mbox{\rm Int}}
\newcommand{\wt}{\widetilde}
\newcommand{\sol}{\mbox{Sol}_3}
\newcommand{\ed}{\end{document}}
\newcommand{\ov}{\overline}
\definecolor{rr}{rgb}{.8,0,.3}
\definecolor{drr}{rgb}{.5,0,.4}
\definecolor{rrr}{rgb}{.9,0,.1}
\def\a{{\alpha}}
\def\t{{\theta}}
\def\g{{\gamma}}
\def\G{{\Gamma}}
\def\l{{\lambda}}
\def\de{{\delta}}
\def\be{{\beta}}
\def\ve{{\varepsilon}}
\def\cM{\mathcal{M}}
\def\cF{\mathcal{F}}
\def\cW{\mathcal{W}}
\let\alfa=\alpha
\let\parc=\partial
\def\esiz{\langle}
\def\esde{\rangle}
\def\cte.{\mathop{\rm cte.}\nolimits}
\def\det{\mathop{\rm det}\nolimits}
\def\div{\mathop{\rm div }\nolimits}
\def\cosh{\mathop{\rm cosh }\nolimits}
\def\N{\mathbb{N}}
\def\R{\mathbb{R}}
\def\Z{\mathbb{Z}}
\def\C{\mathbb{C}}
\def\D{\mathbb{D}}
\def\H{\mathbb{H}}
\def\S{\mathbb{S}}
\newtheorem{theorem}{Theorem}[section]
\newtheorem{lemma}[theorem]{Lemma}
\newtheorem{proposition}[theorem]{Proposition}
\newtheorem{remark}[theorem]{Remark}
\newtheorem{corollary}[theorem]{Corollary}
\newtheorem{definition}[theorem]{Definition}
\newtheorem{conjecture}[theorem]{Conjecture}
\newtheorem{assertion}[theorem]{Assertion}
\newcommand{\su}{\mbox{SU}(2)}
\newcommand{\EE}{\wt{\mbox{E}}(2)}
\renewcommand{\sl}{\wt{\mbox{SL}}(2,\R)}
\numberwithin{equation}{section}
\begin{document}

\begin{title}
{The geometry of stable minimal surfaces in metric Lie groups}
\end{title}
\today
\author{William H. Meeks III}
\address{William H. Meeks III, Mathematics Department,
University of Massachusetts, Amherst, MA 01003}
\email{profmeeks@gmail.com}
\author{Pablo Mira}
\address{Pablo Mira, Department of
Applied Mathematics and Statistics, Universidad
Polit\'ecnica de Cartagena, E-30203 Cartagena, Murcia, Spain.}

\email{pablo.mira@upct.es}

\author{Joaqu\'\i n P\'erez}
\address{Joaqu\'\i n P\'erez, Department of Geometry and Topology {and Institute of Mathematics IEMath-GR},
University of Granada, 18001 Granada, Spain}
 \email{jperez@ugr.es}

\begin{abstract}
We study geometric properties of compact stable minimal surfaces with boundary
in homogeneous 3-manifolds $X$ that can be expressed as a semidirect product
of $\R^2$ with $\R$ endowed with a left invariant metric. For any such compact
minimal surface $M$, we provide a priori radius estimate which depends only on
the maximum distance of points of the boundary $\partial M$ to a vertical geodesic
of $X$. 
We also give a
generalization of the classical Rado's Theorem~\cite{ra2} in $\R^3$ to the context
of compact minimal surfaces with graphical boundary over a convex horizontal
domain in $X$, and we study the geometry, existence and uniqueness of this type of Plateau
problem.

\par
\vspace{.1cm} \noindent{\it Mathematics Subject Classification:}
Primary 53A10, Secondary 49Q05, 53C42.

\vspace{.1cm} \noindent {\em Key words and phrases:} { Minimal surface,
radius estimates, stability, Rado's Theorem,
metric Lie group, homogeneous 3-manifold, left invariant metric.
}

\end{abstract}
\maketitle

\vspace{-.4cm}

\section{Introduction.}  \label{sec:introduction}
In this paper we study the geometry of compact minimal surfaces with boundary in homogeneous manifolds
diffeomorphic to $\R^3$. By classification, each such homogeneous manifold $X$ is a \emph{metric Lie group}, i.e.,
a simply connected 3--dimensional Lie group equipped with a left invariant Riemannian metric $\esiz,\esde$. For such
an $X$ there are two possibilities; either $X$ is isometric to the universal cover of the special linear
group ${\rm SL}(2,\R)$ endowed with some left invariant metric, or $X$ is a \emph{metric semidirect product}.
By definition, a metric semidirect product $X=\R^2\rtimes_A \R$ is given as a Lie group $(\R^3\equiv \R^2 \times \R,*)$
together with a certain left invariant metric (its \emph{canonical metric}, see Definition~\ref{def2.1}),
where the product operation $*$ is
expressed in terms of some real $2\times2$ matrix $A\in \cM_2(\R)$ as
\begin{equation*}
 ({\bf p}_1,z_1)*({\bf p}_2,z_2)=({\bf p}_1+ e^{z_1 A}\  {\bf
 p}_2,z_1+z_2);
\end{equation*}
see Subsection~\ref{subsem} for more details. When ${\rm trace}(A)=0$, $X$ is a \emph{unimodular}
semidirect product; typical examples of Riemannian manifolds in this situation are the Euclidean space $\R^3$, the
Heisenberg space ${\rm Nil}_3$ or the solvable Lie group ${\rm Sol}_3$ with its usual Thurston geometry.
When $\mathrm{trace}(A)\neq 0$, we obtain the \emph{non-unimodular} semidirect products, among which we highlight
the hyperbolic space $\H^3$ and the Riemannian product $\H^2\times \R$.

The geometry of minimal surfaces in homogeneous 3-manifolds of
non-constant
sectional curvature has been deeply studied in the last decade, specially in
the case that the isometry group of the homogeneous manifold has dimension four.
To indicate just a few relevant works in this area, we may cite
\cite{AbRo1,AbRo2,cor2,da1,da7,dh1,fm3,fm1,
mper1,ner2,MMRo4,rose2,tor2}.
An outline of the beginning of the theory of constant mean curvature surfaces
in homogeneous 3-manifolds with a 4-dimensional
isometry  group can be consulted in~\cite{dhm1,fm4}.

For the generic, \emph{non-symmetric} case of homogeneous
3-manifolds with
an isometry group of dimension three, the theory of minimal surfaces is less
developed. For some works dealing with this more general situation, see
e.g., \cite{dmr1,dm2,des,iw1,Kru1,lomu,mmpr4,mpe17,mpe11,men,Ng,ra1}. For an
introduction to the geometry of general simply connected homogeneous
3-manifolds, see~\cite{mpe11}.

In this paper we develop some aspects of the theory of compact  minimal
surfaces with boundary in metric semidirect products $X=\R^2\rtimes_A \R$.
We shall be specially interested in the geometry, existence and uniqueness of
solutions to the Plateau problem for graphical boundaries on convex
{domains of $\R^2\rtimes_A\{ 0\} $,} and on estimating the radius of compact stable minimal surfaces
with boundary in metric semidirect products. We recall that the \emph{radius}
of a compact Riemannian surface $M$ with boundary is the maximum distance of
points in the surface to its boundary $\parc M$.

An important classical result of Rado~\cite{ra2} states that a simple closed
curve $\G$ in $\rth$ that  has a 1-1 orthogonal projection to a convex curve
in a plane $P\subset \rth$, is the boundary of a minimal disk of finite area
that is a graph over its projection to $P$, and furthermore, any branched
minimal disk in $\rth$ with boundary $\G$ has similar properties.
Other classical Rado type results for minimal surfaces in $\R^3$ were obtained
in~\cite{me1,ni3}. In Section~\ref{section:Plateau} of this paper we will
extend Rado's theorem to the context of minimal surfaces in metric semidirect
products; see Theorems~\ref{Plateau:unique} and ~\ref{lemma2}.

In Section~\ref{sec:4}, Theorems~\ref{Plateau:unique} and ~\ref{lemma2} are used
to study the geometry of compact minimal surfaces $\Sigma$ in a non-unimodular semidirect
product, such that $\Sigma $ is the boundary of a round Euclidean circle in $\R^2\rtimes_A \{0\}$;
namely we prove that as the radius of such a circle goes to infinity, then the angles that
$\Sigma$ makes with $ \R^2\rtimes_A \{0\}$ along its boundary circle
converge uniformly to $\pi/2$; see Theorem~\ref{thm:Plateau}
for a generalization of this result and also see the related application given in Corollary~\ref{corol5.4}
to the existence of a minimal annulus bounded by two circles in $ \R^2\rtimes_A \{0\}$
of large radius, so that these circles can be taken arbitrarily far away from each other.
All of these results  are then applied in Section~\ref{sec:estim} to obtain
radius estimates of compact minimal surfaces with boundary in metric
semidirect products $X=\R^2\rtimes_A \R$, as we explain next. Given $A\in \mathcal{M}_2(\R)$,
any vertical line $\Gamma=\{(x_0,y_0,z) \mid z\in \R\}$
in $X=\R^2\rtimes_A \R$ is a geodesic of $X$ (endowed with its canonical metric),
which we call a \emph{vertical geodesic}. By a \emph{metric solid cylinder}
of radius $r>0$ in $X$ around $\Gamma$ we mean the set of points $\cW(\G ,r)$ in $X$ whose distance to $\Gamma$
is at most $r$. With these definitions in mind, the next theorem summarizes another main result of the paper.

\begin{theorem}
\label{th:intro}
Let $X=\R^2\rtimes_A \R$ be a metric semidirect product, and let $\cW(\G ,r)$
be a solid metric cylinder in $X$ of radius $r>0$ around a vertical geodesic $\Gamma$.
There exists some $R=R(r)>0$ such that if $M$ is a compact, stable minimal surface
in $X$ whose boundary $\partial M$ is contained in $\cW(\G ,r)$, then $M$ has radius at most $R$.
In particular, there are no complete stable minimal surfaces contained in $\cW(\G ,r)$.
\end{theorem}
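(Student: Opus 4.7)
The plan is to argue by contradiction via a blow-up/compactness argument, with the contradiction coming from the Plateau-type barriers constructed in Sections~\ref{section:Plateau} and~\ref{sec:4}. Assume no such $R$ exists; then there is a sequence $\{M_n\}$ of compact stable minimal surfaces in $X$ with $\parc M_n\subset \cW(\G,r)$ and points $p_n\in M_n$ satisfying $d_{M_n}(p_n,\parc M_n)\to\infty$. Since left-translation by $p_n^{-1}$ is an isometry of $X$, I replace each $M_n$ by its image under $L_{p_n^{-1}}$, so that the new base point is the identity $e\in X$; the translated boundary then lies in $\cW(\G_n,r)$, where $\G_n=L_{p_n^{-1}}(\G)$ is again a vertical geodesic of $X$.

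Next I would apply the standard curvature estimate for stable minimal surfaces in a fixed Riemannian 3-manifold of bounded geometry, which applies here since $X$ is homogeneous: there exists $C=C(X)$ such that $|A_{M_n}|(q)\,\min\{1,d_{M_n}(q,\parc M_n)\}\leq C$ for every $q\in M_n$. Because $d_{M_n}(e,\parc M_n)\to\infty$, this yields uniform local $C^k$-bounds for the $M_n$ near $e$, so a subsequence converges smoothly on compact subsets of $X$ to a complete stable minimal surface $M_\infty\subset X$ passing through $e$. After extracting a further subsequence, I may also assume either that $\G_n$ converges to a vertical geodesic $\G_\infty$ or that it escapes to infinity in $X$; by composing with an additional translation along $\G$ from the start (which does not change the hypothesis $\parc M_n\subset \cW(\G,r)$), one can arrange the former.

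The crux is then to use the Plateau-barriers of the paper to rule out the existence of $M_\infty$. Theorems~\ref{Plateau:unique}, \ref{lemma2} and \ref{thm:Plateau}, together with Corollary~\ref{corol5.4}, produce compact minimal surfaces in $X$ with graphical boundary over arbitrarily large round circles in horizontal slices $\R^2\rtimes_A\{t\}$, whose contact angle with the horizontal slice tends to $\pi/2$ as the radius grows. Stacking such surfaces vertically and translating horizontally yields a family of compact minimal one-sided barriers in every fixed exterior $X\setminus \ov{\cW(\G_n,R')}$. A maximum-principle comparison of each $M_n$ with a suitable member of this family, together with the inclusion $\parc M_n\subset \cW(\G_n,r)$, first confines $M_n\subset \cW(\G_n,R_0)$ for some $R_0=R_0(r)$ independent of $n$, and hence, passing to the limit, $M_\infty\subset \cW(\G_\infty,R_0)$. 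A second comparison with members of the same barrier family of progressively smaller radius then produces an interior tangency of $M_\infty$ with a barrier, violating the maximum principle. This last step simultaneously proves the ``in particular'' statement.

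The main obstacle I anticipate is precisely this barrier step: one must extract from the Plateau results of the paper a family of minimal surfaces rich enough to serve as one-sided barriers in $X\setminus\ov{\cW(\G_\infty,r)}$ and in its interior thickenings, while respecting the non-isotropic geometry of $X=\R^2\rtimes_A\R$ whose horizontal slices are in general not totally geodesic. Setting this family up (in particular, tracking how the angle and position of the Plateau solutions depend on the boundary circle as it is translated and rescaled) is what forces use of the full strength of Theorem~\ref{thm:Plateau} and Corollary~\ref{corol5.4} rather than a soft Rado-type statement; once the family is in place, the two maximum-principle comparisons and the blow-up convergence proceed by standard arguments.
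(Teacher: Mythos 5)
Your overall architecture (contradiction, curvature estimates for stable surfaces, smooth limit, barrier plus maximum principle) matches the paper's strategy, but two of your steps have genuine gaps. First, the order of operations: you translate each $M_n$ by $p_n^{-1}$ and claim that an extra translation \emph{along} $\G$ lets you assume the translated geodesics $\G_n$ converge. That cannot work: a priori the points $p_n$ may drift \emph{horizontally} arbitrarily far from $\G$ (only $\parc M_n$ is known to lie in $\cW(\G ,r)$, not $M_n$ itself), and translations along the vertical geodesic do not reduce horizontal distance. You must first prove the confinement $M_n\subset \cW(\G ,R_0(r))$ and only then renormalize --- and then only by elements of $\G$, which is exactly how the paper sets up the compactness argument in Theorem~\ref{ass:box5}.

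Second, and more seriously, your confinement mechanism --- ``stacking the Plateau solutions vertically and translating horizontally'' --- does not produce a usable barrier family. Vertical translation $(x,y,z)\mapsto (x,y,z+t)$ is not an isometry of the canonical metric (the paper points this out explicitly), and the left translation by $(0,0,t)$, which \emph{is} an isometry, distorts a round circle in $\R^2\rtimes_A\{0\}$ by $e^{tA}$; when the Milnor invariant is positive this shrinks the stacked boundary curves exponentially as $t\to+\infty$, so the ``stack'' degenerates instead of enclosing a vertical cylinder. This is precisely why the paper proves confinement by two separate ad hoc constructions rather than by Plateau barriers: for non-unimodular $X$ with positive $D$-invariant, the strictly mean-convex (not minimal) $F_3$-invariant solid cylinders over ellipses of Proposition~\ref{Plateau2}, whose eccentricity $\mu$ must be chosen as a function of $\a ,\be$ via an explicit second-fundamental-form computation; and for the remaining cases, the equidistant surfaces of vertical planar subgroups in Proposition~\ref{prop5.1}, with special arguments for $\EE$ and ${\rm Nil}_3$. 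Your final tangency step is close in spirit to the paper's (which slides the least-area annuli of Corollary~\ref{corol5.4}, or minimal planes in the unimodular case, to a last point of contact with the closure of the union of all $\G$-translates of $M_\infty$ --- that saturation being needed so that last contact is attained on an actual minimal disk), but without a correct confinement step the argument does not close.
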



Theorem~\ref{th:intro} will be proved in Section~\ref{sec:estim};
see Theorem~\ref{corrad2}.
Another tool used to prove Theorem~\ref{corrad2}
is Proposition~\ref{Plateau2}, where we will construct a certain family of mean
convex solid cylinders over appropriately defined ellipses in non-unimodular
metric semidirect products with positive Milnor $D$-invariant. For this and
other purposes, we will prove in the
Appendix a few additional technical results about the geometry of these metric semidirect products.

\section{Background material on 3-dimensional metric Lie groups.}
 \label{sec:background}

This preliminary section is devoted to state some basic properties of
 3-dimensional Lie groups endowed
with a left invariant metric that will be used
freely in later sections. For details
of these basic properties, see the general reference~\cite{mpe11}.

Let $Y$ denote a simply connected, homogeneous Riemannian
3-manifold, and  assume that it is not isometric to the
Riemannian product of the 2-sphere
$\S^2(\kappa)$ of constant curvature $\kappa >0$ with the real line.
Then $Y$ is isometric to a simply connected, 3-dimensional Lie group
$G$ equipped with a left invariant metric $\langle ,\rangle $,
i.e., for every $p\in G$, the left translation $l_p\colon G\to G$, $l_p(q)=p\, q$, is an isometry
of $\langle ,\rangle $.
We will call such a space a
\emph{metric Lie group}, $X=(G,\langle ,\rangle )$. When $X$ is simply connected, there are three possibilities:

\begin{enumerate}[$\bullet $]
\item $X$ is isometric to the special unitary group $\su$ with a left invariant metric.
This is the only case
in which $X$ is not diffeomorphic to $\R^3$, and the family of
left invariant metrics is 3-dimensional.
\item
$X$ is isometric to the universal cover $\sl$ of the special linear group, equipped
with a left invariant metric. Again,
there is a 3-dimensional family of such metrics.
\item
$X$ is isometric to a semidirect product $\R^2\rtimes_A \R$ equipped
with its {\it canonical metric,} which is the left
invariant metric introduced in Definition~\ref{def2.1} below.
In this third case, the underlying Lie group is
$(\R^3\equiv \R^2\times \R,*)$, where the group operation $*$ is
expressed in terms of some real $2\times2$ matrix $A\in \cM_2(\R)$ as
\begin{equation}
\label{eq:5}
 ({\bf p}_1,z_1)*({\bf p}_2,z_2)=({\bf p}_1+ e^{z_1 A}\  {\bf
 p}_2,z_1+z_2);
\end{equation}
\end{enumerate}
here ${\bf p}_1,{\bf p}_2\in \R^2$, $z_1,z_2\in \R$ and
$e^B=\sum _{k=0}^{\infty }\frac{1}{k!}B^k$ denotes the usual exponentiation
of a matrix $B\in \cM_2(\R )$.

\subsection{Semidirect products.}
\label{subsem}
Consider the semidirect product $\R^2\rtimes_A
\R$, where
\begin{equation} \label{equationgenA} A=\left(
\begin{array}{cr}
a & b \\
c & d \end{array}\right) .
\end{equation}
Then, in terms of the coordinates $(x,y)\in \R^2$, $z\in \R $, we
have the following basis $\{ F_1,F_2,F_3\} $ of the
{linear} space of {\it right invariant}
vector fields on $\R^2\rtimes_A
\R$:
\begin{equation}
\label{eq:6}
 F_1=\partial _x,\quad F_2=\partial _y,\quad F_3(x,y,z)=
(ax+by)\partial _x+(cx+dy)\partial _y+\partial _z.
\end{equation}
In the same way, a {\it left invariant} frame $\{ E_1,E_2,E_3\} $ of $X$
is given by
\begin{equation}
\label{eq:6*}
 E_1(x,y,z)=a_{11}(z)\partial _x+a_{21}(z)\partial _y,\quad
E_2(x,y,z)=a_{12}(z)\partial _x+a_{22}(z)\partial _y,\quad
 E_3=\partial _z,
\end{equation}
where
\begin{equation}
\label{eq:exp(zA)}
 e^{zA}=\left(
\begin{array}{cr}
a_{11}(z) & a_{12}(z) \\
a_{21}(z) & a_{22}(z)
\end{array}\right) .
\end{equation}
In terms of $A$, the Lie bracket relations are:
\begin{equation}
\label{eq:8a} [E_1,E_2]=0,
\quad
[E_3,E_1]=aE_1+cE_2,
\quad
 [E_3,E_2]=bE_1+dE_2.
 \end{equation}
Observe that ${\rm Span}\{ E_1,E_2\}$ is an integrable
2-dimensional distribution of $\R^2\rtimes_A
\R$, whose integral surfaces are the leaves of the
foliation $\mathcal{F}= \{ \R^2\rtimes _A\{ z\} \mid z\in \R \} $ of
$\R^2\rtimes _A\R$.

\begin{definition}
 \label{def2.1}
 {\rm
We define the {\it canonical left invariant metric} on the
semidirect product $\R^2\rtimes _A\R $ to be that one for which the
left invariant basis $\{ E_1,E_2,E_3\} $ given by (\ref{eq:6*}) is
orthonormal. Equivalently, it is the left invariant extension to $\R^2\rtimes _A\R $
of the inner product on the tangent space $T_{\vec{0}} (\R^2\rtimes_A
\R)$ at the identity element $\vec{0}=(0,0,0)$
that makes $\{(\partial _x)_{\vec{0}},(\partial _y)_{\vec{0}},(\partial _z)_{\vec{0}}\}$
an orthonormal basis.}
\end{definition}

We next emphasize some other metric properties of the canonical left
invariant metric $\langle ,\rangle $ on $\R^2\rtimes_A \R$:

\begin{enumerate}[$\bullet $]
\item The mean curvature of each leaf of the foliation $\mathcal{F}=
\{ \R^2\rtimes _A\{ z\} \mid z\in \R \}$ with respect to the unit
normal vector field $E_3$ is the constant $H=\mbox{trace}(A)/2$. All
the leaves of the foliation $\mathcal{F}$ are intrinsically flat.
\item The change from the orthonormal basis $\{ E_1,E_2,E_3\} $ to the
basis $\{ \partial _x,\partial _y,\partial _z\} $ given by
(\ref{eq:6*}) produces the following expression for the metric
$\langle ,\rangle $ in the $x,y,z$ coordinates of $X:=(\R^2\rtimes_A
\R,\esiz,\esde)$:
\begin{equation}
\label{eq:13}
 \left.
\begin{array}{rcl}
\langle ,\rangle & =&
 \left[ a_{11}(-z)^2+a_{21}(-z)^2\right] dx^2+
\left[ a_{12}(-z)^2+a_{22}(-z)^2\right] dy^2 +dz^2 \\
& + & \rule{0cm}{.5cm} \left[
a_{11}(-z)a_{12}(-z)+a_{21}(-z)a_{22}(-z)\right] \left( dx\otimes
dy+dy\otimes dx\right)
\\
&=& \rule{0cm}{.5cm} e^{-2\mbox{\footnotesize trace}(A)z} \left\{
 \left[ a_{21}(z)^2+a_{22}(z)^2\right] dx^2+
\left[ a_{11}(z)^2+a_{12}(z)^2\right] dy^2\right\} +dz^2 \\
& - & \rule{0cm}{.5cm}
 e^{-2\mbox{\footnotesize trace}(A)z} \left[
a_{11}(z)a_{21}(z)+a_{12}(z)a_{22}(z)\right] \left( dx\otimes
dy+dy\otimes dx\right) .
\end{array}
\right.
\end{equation}

\item The Levi-Civita connection associated to the canonical left invariant
metric is easily deduced from the Koszul formula and~(\ref{eq:8a}) as follows:
{
\begin{equation}
\label{eq:12}
\begin{array}{l|l|l}
\rule{0cm}{.5cm}
\nabla _{E_1}E_1=a\, E_3 & \nabla _{E_1}E_2=\frac{b+c}{2}\, E_3
& \nabla _{E_1}E_3=-a\, E_1-\frac{b+c}{2}\, E_2 \\
\rule{0cm}{.5cm}
\nabla _{E_2}E_1=\frac{b+c}{2}\, E_3 & \nabla _{E_2}E_2=d\, E_3
& \nabla _{E_2}E_3=-\frac{b+c}{2}\, E_1-d\, E_2 \\
\rule{0cm}{.5cm}
\nabla _{E_3}E_1=\frac{c-b}{2}\, E_2 & \nabla
_{E_3}E_2=\frac{b-c}{2}\, E_1 & \nabla _{E_3}E_3=0.
\end{array}
\end{equation}
}
\end{enumerate}

\begin{remark}
\label{rem3.2}
{\rm
It follows from equation (\ref{eq:13}) that given $(x_0,y_0)\in \R^2$, the map
$$(x,y,z)\stackrel{\phi }{\mapsto }(-x+2x_0,-y+2y_0,z)$$ is an
isometry of $(\R^2\rtimes _A\R ,\langle ,\rangle )$ into itself.
Note that $\phi $ is the rotation by angle $\pi $ around the line
$l=\{ (x_0,y_0,z)\ | \ z\in \R \} $, and the fixed point set of
$\phi $ is the geodesic $l$. In particular, vertical lines in the
$x,y,z$-coordinates of $\R^2\rtimes_A\R$ are geodesics of its canonical
metric, which are the axes or fixed point sets of the isometries
corresponding to rotations by angle $\pi$ around them.
For any line $L$ in $\R^2\rtimes_A \{0\}$, let $P_L$ denote the vertical plane
$\{(x,y,z) \mid (x,y,0)\in L, \, z\in \R\}$ containing the set of
vertical lines passing though $L$. It follows that the plane $P_L$
is  ruled by vertical geodesics and furthermore,  since the rotation by
angle $\pi$ around any vertical line in $P_L$ is an isometry that leaves
$P_L$ invariant, then $P_L$ has zero mean curvature. Thus, every
metric Lie group that can be expressed as a semidirect product of
the form $\R^2\rtimes_A\R$ with its canonical metric has many
minimal foliations by parallel vertical planes, where by parallel we
mean that the related lines in $\R^2\rtimes_A\{0\}$ for these planes
are parallel in the intrinsic metric.
}
 \end{remark}

\subsection{Unimodular groups.}
\label{secunimodgr}
Among all simply connected, 3-dimensional Lie groups, the cases
$\su$, $\sl$, 
$\sol$ (whose underlying group arises in the so called
{\it Sol geometry}), $\EE $
(universal cover of the Euclidean group of orientation-preserving rigid motions of the plane),
Nil$_3$ (Heisenberg group) and $\R^3$
comprise the \emph{unimodular} Lie groups; the cases of $\sol$,
$\widetilde{\mbox{E}}(2)$, Nil$_3$ and $\R^3$ with their left invariant metrics correspond to the
metric semidirect products $\R^2 \rtimes_A\R$, where the trace of
$A$ is zero.
We refer the
reader to~\cite{mpe11} for further details.

\subsection{Non-unimodular groups.}
The case $X=\R^2\rtimes_A \R$ with ${\rm trace} (A)\neq 0$
corresponds to the simply connected, 3-dimensional, {\em non-unimodular}
metric Lie groups. In this case,
up to the rescaling of the metric of $X$, we may assume that
${\rm trace} (A)=2$. {\it This normalization in the non-unimodular case
will be assumed from now on throughout the paper}.
After an appropriate orthogonal change of the left invariant frame that fixes
the vertical field $E_3$,
we may express the matrix $A$ uniquely as (see Section 2.5 in~\cite{mpe11}):
\begin{equation}
\label{Axieta} A=A(\alfa,\beta)= \left(
\begin{array}{cc}
1+\alfa & -(1-\alfa)\beta\\
(1+\alfa)\beta & 1-\alfa \end{array}\right), \hspace{1cm} \alfa,\beta\in [0,\infty).
\end{equation}

The \emph{canonical basis} of the
non-unimodular metric Lie group $X$ is, by definition, the
left invariant orthonormal frame $\{E_1,E_2,E_3\}$ given in
\eqref{eq:6*} by the matrix $A$ in~(\ref{Axieta}).
In other words, every simply connected, non-unimodular metric Lie
group is isomorphic and isometric (up to possibly rescaling the
metric) to $\R^2\rtimes _A\R $ with its canonical metric, where $A$
is given by (\ref{Axieta}). If $A=I_2$ where $I_2$ is the identity
matrix, we get a metric Lie group that we denote by $\H^3$, which is
isometric to the  hyperbolic 3-space with its standard metric of
{constant sectional curvature $-1$ and
where the underlying Lie group structure is} isomorphic to that of
the set of similarities of $\R^2$.
Under the assumption that $A\neq I_2$, the
determinant of $A$ determines uniquely the Lie group structure.

\begin{definition} \label{milnor-invariant}
{\rm
The \emph{Milnor $D$-invariant} of $X=\R^2\rtimes_A \R$
is the determinant of $A$:
\begin{equation}
\label{Dxieta} D=(1-\alfa^2)(1+\beta^2) ={\rm det} (A).
\end{equation}
}
\end{definition}
\vspace{.2cm}

Assuming $A\neq I_2$, given $D\in \R $, one can solve
(\ref{Dxieta}) for $\alfa=\alfa(D,\beta)$, producing a related matrix $A(D,\beta)$
by equation (\ref{Axieta}), and the space of canonical left
invariant metrics on the corresponding non-unimodular Lie group
structure is parameterized by the values of $\beta \in [m(D),\infty )$,
where
\begin{equation}
\label{eq:m(D)}
m(D)=\left\{ \begin{array}{cl}
\sqrt{D-1} & \mbox{ if $D>1$,}\\
0 & \mbox{ otherwise}.
\end{array}\right.
\end{equation}
In particular, after scaling so that ${\rm trace} (A)=2$ and assuming that $A\neq I_2$, the
space of simply connected, 3-dimensional, non-unimodular metric Lie groups with a given
$D$-invariant is 1-dimensional.

\begin{remark}\label{clasemi}
\emph{From now on, by a metric semidirect product $X$ we will mean
(without loss of generality, see the explanation below) a semidirect product
$\R^2\rtimes_A \R$ endowed with its canonical left invariant metric
$\esiz,\esde$, and such that the matrix $A\in \mathcal{M}_2(\R)$ either has
trace zero (unimodular case) or is given by expression \eqref{Axieta} for some
$\alfa,\beta \in [0,\8)$ (non-unimodular case). We must observe that we do not
lose any generality with this normalization, since by the previous discussion,
every metric semidirect product $\R^2\rtimes_B \R$ whose associated matrix $B$
has non-zero trace is both isomorphic and isometric (after an adequate
rescaling) to a metric semidirect product $\R^2\rtimes_A \R$ where $A$ is
given by \eqref{Axieta}. Moreover, the corresponding isomorphism takes the
horizontal foliation $\{ \R^2\rtimes_B \{z\}\ | \ z\in \R \} $ of
$\R^2\rtimes_B \R$ to the horizontal foliation $\{ \R^2\rtimes_A \{z\} \ | \
z\in \R \} $ of $\R^2\rtimes_A \R$ and also
preserves the left invariant vertical vector fields $E_3$ of their respective
canonical frames.
}
\end{remark}
Along the paper, we will denote by $\Pi\colon \R^2\rtimes_A\R\to \R^2\rtimes_A\{0\}$ the
projection $\Pi(x,y,z)=(x,y,0)$.

\section{Rado's Theorem in metric semidirect products.}
 \label{section:Plateau}
In this section we prove some results concerning the geometry of
solutions to Plateau type problems in metric semidirect products $X=\R^2
\rtimes_A \R$, when there is some geometric constraint on the
boundary values of the solution. The first of these
results is Theorem~\ref{Plateau:unique} below.
We remark that several versions of this theorem
in the classical setting of $X=\rth=\R^2\times \R$ were proved by
Rado~\cite{ra2}, Nitsche~\cite{ni3} and  Meeks~\cite{me1}.
We point out that one of the difficulties in obtaining Rado-type
results in the situation below is that the vertical translation
$(x,y,z)\mapsto (x,y,z+t)$ might not be an isometry of the canonical metric on
$\R^2\rtimes _A\R $.

\begin{theorem}[Rado's Theorem in metric semidirect products]
\label{Plateau:unique}
 Let $X=\R^2\rtimes_A \R$ be a metric semidirect product.
 Suppose that $E$ is a compact convex
disk in $\R^2\rtimes_A\{0\}$, $C=\partial E$ and $\G\subset
\Pi^{-1}(C)$ is a continuous simple closed curve such that
$\Pi|_{\G }\colon \Gamma \to C$ monotonically parameterizes\footnote{This
means that for every point $p\in C$, $\Pi^{-1}(p)\cap \G$ is a
compact interval or a single point.} $C$. Then:

\ben[1.]
\item $\G$ is the boundary of a compact embedded disk $D$ of finite least area.
\item The interior of $D$ is a smooth $\Pi$-graph over the interior of $E$.
\een
\end{theorem}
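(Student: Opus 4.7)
I would first apply Morrey's solution to the Plateau problem (valid since $X$ is homogeneously regular) to produce a continuous map $f\colon\overline{\D}\to X$ whose restriction to $\partial\D$ monotonically parametrizes $\Gamma$ and whose image $D=f(\overline{\D})$ minimizes area in this class. Standard regularity makes $f$ a conformal, branched minimal immersion, smooth in the interior except possibly at finitely many branch points.

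\textbf{Confinement via convexity.} By Remark~\ref{rem3.2}, for every line $\ell\subset\R^2\rtimes_A\{0\}$ the vertical plane $P_\ell=\Pi^{-1}(\ell)$ is minimal. Since $E$ is convex, it is the intersection of the halfplanes bounded by its supporting lines; for each such line $\ell$, the one-parameter family $\{P_{\ell+t\vec{n}}\}_{t\in\R}$ is a foliation of $X$ by minimal planes parallel to $P_\ell$. Because $\Gamma\subset\Pi^{-1}(E)$, a standard maximum principle ``sweeping'' argument against this foliation forces $D$ to lie in the halfspace over the $E$-side of $\ell$; intersecting over all supporting lines gives $D\subset\Pi^{-1}(E)$.

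\textbf{Graph property (main step).} This is the Rado-type argument. For any line $\ell$ meeting the interior of $E$, convexity gives $\ell\cap C=\{p_1,p_2\}$, and the monotonicity hypothesis on $\Pi|_\Gamma$ forces $P_\ell\cap\Gamma=\Pi^{-1}(p_1)\cup\Pi^{-1}(p_2)$ to have exactly two connected components, each a compact interval or a point. Suppose for contradiction that the interior of $D$ fails to be a smooth $\Pi$-graph over the interior of $E$. Then there exists an interior point $q\in D$, with $\Pi(q)$ in the interior of $E$, at which either $T_qD$ is vertical, or $f$ has a branch point. Choose a line $\ell$ through $\Pi(q)$ such that $P_\ell\supset T_qD$ in the first case, or any line through $\Pi(q)$ in the branched case. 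The local structure of two minimal surfaces in tangential (or branch) contact forces $D\cap P_\ell$ near $q$ to consist of $2k$ analytic arcs through $q$ with $k\geq 2$. Globally, $D\cap P_\ell$ is an analytic $1$-complex: any closed component interior to $D$ would bound subdisks on both $D$ and $P_\ell$ sharing a common boundary curve, forbidden by the maximum principle applied to the nearby minimal leaves of $\{P_{\ell+t\vec{n}}\}$. Hence every component of $D\cap P_\ell$ terminates at $\Gamma\cap P_\ell$, which has only two components. With $2k\geq 4$ arcs issuing from $q$ into only two boundary components, a pigeonhole-plus-disk-topology argument produces a subdisk $D'\subset D$ with $q$ in its interior whose $\Pi$-projection lies entirely in one closed halfplane determined by $\ell$. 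Sweeping the foliation $\{P_{\ell+t\vec{n}}\}$ from that side until reaching a first interior contact with $D'$ then violates the maximum principle, completing the contradiction.

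\textbf{Conclusion and main obstacle.} Once the interior of $D$ is a $\Pi$-graph, it is automatically embedded and smooth by elliptic regularity; in particular branch points are ruled out, embeddedness extends to all of $D$ since $\Gamma$ is a Jordan curve, and finite least area is inherent in the construction. The main obstacle is the Rado-type argument in Step~3: the classical Euclidean proof exploits vertical translations as isometries, which is not available in a general metric Lie group $\R^2\rtimes_A\R$. This is circumvented by replacing vertical translations with the horizontal foliation of $X$ by minimal vertical planes from Remark~\ref{rem3.2}. A secondary technicality is that $\Pi|_\Gamma$ need not be injective, but each preimage is connected, so the count of components of $\Gamma\cap P_\ell$ is still two and the combinatorial argument goes through unchanged.
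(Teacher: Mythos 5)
Your proposal is correct and follows essentially the same route as the paper: confinement to $\Pi^{-1}(E)$ by sweeping foliations of parallel minimal vertical planes (the paper's Lemma~\ref{lemma1}), and the Rado-type rank-two argument analyzing the $1$-complex $D\cap P_\ell$ at a vertical-tangency or branch point against the two components of $\Gamma\cap P_\ell$, concluding with the same maximum-principle sweep (the paper's item~1 of Theorem~\ref{lemma2}). The only cosmetic difference is that you invoke Morrey's solution of the Plateau problem for existence of the least-area disk, whereas the paper cites Theorem~1 of Meeks--Yau~\cite{my2}, which also hands them embeddedness directly; your route recovers embeddedness instead from the graph property, exactly as the paper does in deducing item~1a from item~1b.
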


Theorem~\ref{Plateau:unique} will be a direct consequence of Theorem~\ref{lemma2} below,
which actually gives a more complete statement. The proof of Theorem~\ref{lemma2} also depends
on the following Lemma~\ref{lemma1}; both of these results will also be used
in the proof of Theorem~\ref{thm:Plateau}  in
Section~\ref{sec:4}.

\begin{lemma}
\label{lemma1}
 Suppose $X=\R^2\rtimes_A \R$ is a metric semidirect product. Let $E\subset \R^2\rtimes_A\{0\}$ be a compact
convex disk with boundary curve $C$. If $M$ is a compact branched
minimal surface in $X$ with boundary contained in $\Pi^{-1}(C)$,
then:
\ben[1.]
\item  $\Int(M)$ is contained in the interior of $\Pi^{-1}(E)$.
\item If $\partial M$ is of class $C^2$, then $M$ is an
immersion near its boundary and
transverse to $\Pi^{-1}(C)$ along $\partial M$. \een \end{lemma}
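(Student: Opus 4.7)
The plan rests on two observations from Remark~\ref{rem3.2}: for every affine line $L\subset\R^2\rtimes_A\{0\}$, the vertical plane $P_L:=\Pi^{-1}(L)$ is a minimal surface, and left translation by a horizontal element $(\mathbf{p}_0,0)$ is an ambient isometry of $X$ that acts on $\R^2\rtimes_A\{0\}$ as Euclidean translation by $\mathbf{p}_0$. Consequently, the family $\{P_L^t\}_{t\in\R}$ of horizontal parallel translates of $P_L$ is a foliation of $X$ by minimal vertical planes.

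For Part 1, I would run a standard sliding argument. Fix a supporting line $L$ of $E$, let $\nu$ be the unit horizontal normal to $L$ pointing away from $E$, and let $L^t=L+t\nu$; for $t>0$ the line $L^t$ misses $E$, so $P_L^t$ is disjoint from $\Pi^{-1}(C)$ and hence from $\partial M$. Set $t^*:=\sup\{t\ge 0:M\cap P_L^t\neq\emptyset\}$, which is finite by compactness of $M$. If $t^*>0$, then $M$ lies on the $E$-side of $P_L^{t^*}$ and touches it at an interior point; the strong interior maximum principle for branched minimal surfaces (Micallef--White) forces $M\subset P_L^{t^*}$, and hence $\partial M=\emptyset$. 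But $X$ admits no closed minimal surface: sweeping any such surface with $\{P_L^t\}$ and invoking unique continuation would identify it with a non-compact plane of that foliation. So $t^*\le 0$, and $M\subset\Pi^{-1}(E_L)$, where $E_L$ is the closed half-plane bounded by $L$ and containing $E$. Intersecting over all supporting lines of the convex disk $E$ yields $M\subset\Pi^{-1}(E)$. For the strict inclusion $\Int(M)\subset\Pi^{-1}(\Int E)$, if some $p\in\Int(M)$ projected onto $C$, a (strict) supporting line $L$ through $\Pi(p)$ would provide an interior tangency of $M$ with the minimal plane $P_L$; the strong maximum principle would then force $M\subset P_L$, whence $\Pi(M)\subset L\cap E=\{\Pi(p)\}$, so $M$ would lie in a single vertical line, contradicting $\dim M=2$.

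For Part 2, the $C^2$ regularity of $\partial M$ lets the boundary maximum principle enter. Fix $p\in\partial M$ and let $L_0$ be the tangent line to $C$ at $\Pi(p)$; then $P_0:=\Pi^{-1}(L_0)$ is simultaneously a minimal plane and the tangent plane to the cylinder $\Pi^{-1}(C)$ along the vertical line through $p$. By Part 1, $M$ lies on the $E$-side of $P_0$. If $M$ were tangent to $P_0$ at $p$---in particular if $p$ were a branch point of $M$, whose tangent cone collapses to a plane through $p$---then E.~Hopf's boundary point lemma, applied to the minimal surface equation for the height of $M$ over $P_0$, would force $M\equiv P_0$ in a neighborhood of $p$ and, by unique continuation, globally. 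As in Part 1 this is impossible, so simultaneously $\partial M$ carries no branch points and $M$ meets $\Pi^{-1}(C)$ transversally along $\partial M$.

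The main obstacle is the rigorous deployment of the strong interior and boundary maximum principles in the branched setting. Away from branch points both are classical; at an interior branch point one uses the Micallef--White local representation of a branched minimal surface by a pseudoholomorphic graph, and transferring this into a clean boundary Hopf lemma at a possible branch point of $\partial M$ lying on the vertical cylinder $\Pi^{-1}(C)$ requires a careful local model in order to legitimately compare $M$ with the tangent vertical plane $P_0$.
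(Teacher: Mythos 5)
Your Part~1 is essentially the paper's argument: both proofs slide a foliation of parallel minimal vertical planes $\Pi^{-1}(L_t)$ until the last plane touching $M$, observe that the touching point must be interior because $\partial M\subset\Pi^{-1}(C)$ is disjoint from the planes with $t>0$, and invoke the interior maximum principle there; the only organizational difference is that you first prove $M\subset\Pi^{-1}(E)$ by intersecting half-spaces over all supporting lines and then treat the strict inclusion, whereas the paper argues directly by contradiction from a point of $\Int(M)$ outside $\Int(\Pi^{-1}(E))$. For Part~2 you genuinely diverge: the paper simply cites Theorem~2 of Meeks--Yau, while you sketch the underlying proof via Hopf's boundary point lemma applied to the height of $M$ over the tangent minimal plane $P_0=\Pi^{-1}(L_0)$. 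Your sketch is the right mechanism, and you correctly identify that its delicate part is running a boundary maximum principle at a possible boundary branch point --- that is precisely the technical content packaged in the cited Meeks--Yau theorem, so what the citation buys is exactly the local model you flag as missing. One small caveat shared by both arguments: when you force $M$ locally into a vertical plane $P_L$ via the strong maximum principle, unique continuation gives $M\subset P_L$ globally, and ruling this out when $L\cap C$ is a nondegenerate segment (so that $\Pi^{-1}(L\cap C)$ is a vertical strip that could contain a compact planar domain) requires either strict convexity of $E$ or the boundary hypotheses under which the lemma is actually applied later in the paper; your appeal to a ``strict'' supporting line quietly assumes this, as does the paper.
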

\begin{proof}
The proof of this lemma uses the fact stated in
Remark~\ref{rem3.2} that for every line $L\subset
\R^2 \rtimes_A \{0\}$, the vertical plane $\Pi^{-1}(L)$ has zero
mean curvature.

Suppose that $M$ is a compact branched minimal surface with
$\partial M \subset \Pi ^{-1}(C)$ and we will prove the first item in the
lemma. Arguing by contradiction, assume
there exists a point $p\in  {\rm Int}(M)$ which is not contained in
the interior of $\Pi^{-1}(E)$. Since $E$ is convex, there exists a
line  $L\subset \R^2 \rtimes_A \{0\}$ such that  $\Pi(p)\in L$ and
$L$ is disjoint from ${\rm Int}(E)$. Hence the vertical minimal
plane $\Pi^{-1}(L)$ intersects $\Int(M)$ at $p$ and so, by the
maximum principle, $M$ contains interior points on both sides of
$\Pi^{-1}(L)$ near $p$.

Consider the product foliation ${\mathcal F}(L)=\{L_{t}\}_{t\in \R}$
of lines in $\R^2\rtimes_A\{0\}$ parallel to $L=L_0$ and
parameterized so that $E \subset \cup_{t\leq 0}L_t$. Let
$\{\Pi^{-1}(L_t)\}_{t\in\R}$ be the related foliation of $X$ by
minimal vertical planes. By compactness of $M$, there is a largest
value $t_0>0$, such that $\Pi^{-1}(L_{t_0})\cap M \not = \mbox{\rm \O}$. But at
any point of this non-empty intersection, we obtain a contradiction
to the maximum principle applied to the minimal surfaces
$\Pi^{-1}(L_{t_0})$ and $M$. This contradiction proves item~1 of
the lemma. Item~2 of the lemma follows from Theorem~2 in~\cite{my2}.
\end{proof}

\begin{theorem}
\label{lemma2}
 Let $X=\R^2\rtimes_A\R$ be a metric semidirect product,  $E$ be a compact convex
 disk in $\R^2\rtimes_A\{0\}$
and $C=\partial E$. Suppose $\G\subset \Pi^{-1}(C)$ is a continuous
simple closed curve such that the projection $\Pi\colon \Gamma \to C$ monotonically
parameterizes $C$. Let $W=\Pi^{-1}(E)$. Then:
\ben[1.]
\item If $D$ is a compact, branched minimal disk in $X$ with $\partial D=\G$, then
the following properties hold:
\begin{description}
\item[{\it 1a}] $D$ is an embedded disk.
\item[{\it 1b}] The interior of $D$ is a smooth $\Pi$-graph over the interior of $E$,
i.e., $\Pi|_{\mbox{\small \rm Int}(D)}\colon {\rm Int}(D)\to
\Int(E)$ is a diffeomorphism.
\item[{\it 1c}] If \,${\Pi|_{\G }}\colon \Gamma \to C$  is a homeomorphism, then
${\Pi|_D}\colon D\to E$
is a homeomorphism.
\item[{\it 1d}] If \,$\G$ is of class $C^2$, then the inclusion map of $D$ is an immersion
along $\partial D$ and $D$ is transverse to $\Pi ^{-1}(C)$
along $\G $.
\item[{\it 1e}] If \,${\Pi|_{\G}}\colon \Gamma \to C$  is a diffeomorphism,
then ${\Pi|_D}\colon D\to E$
is a diffeomorphism.
\end{description}
\item There exist compact minimal disks $D_L, D_T,D_B$ in $W$
with boundary $\G$ such that
\ben[2a:]
\item $D_L$ is an embedded disk of finite least area in $X$.
\item $D_T$ is an embedded disk of finite least area in the closed region of $W$ above the graph $D_T$.
\item $D_B$ is an embedded disk of finite least area in the closed region of $W$ below the graph $D_B$.
\item Any compact branched minimal surface $M$ in $X$ whose boundary lies in the compact
set $W(D_T,D_B)\subset W$ between the graphs $D_T$ and $D_B$,
satisfies $M\subset W(D_T,D_B)$. In particular, the disks $D_T$ and $D_B$ are uniquely
defined by Properties 2b, 2c and 2d; hence,  $\G$ is the boundary of
a unique compact branched minimal surface if and only if $D_T=D_B$.
\een
\een
\end{theorem}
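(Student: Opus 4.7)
The plan is to first prove item~1 for an arbitrary compact branched minimal disk $D$ with $\partial D=\Gamma$ via a Rado-type argument, and then use the resulting graph structure to produce the three disks in item~2 by combining Plateau theory with a maximal-graph envelope construction.

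For items~\textit{1a} and~\textit{1b}, the key fact is that every vertical plane $P_L=\Pi^{-1}(L)$ over a line $L\subset \R^2\rtimes_A\{0\}$ is minimal (Remark~\ref{rem3.2}); Lemma~\ref{lemma1} then places $\Int(D)\subset \Int(W)$. Suppose for contradiction that $\Pi|_D$ fails to be a local diffeomorphism at some $p_0\in\Int(D)$ (i.e.\ $p_0$ is either a branch point of $D$ or a smooth point with vertical tangent plane). Then $T_{p_0}D$ is vertical and equals $T_{p_0}P_L$ for a unique line $L$ through $\Pi(p_0)$. The maximum-principle structure theorem for two minimal surfaces sharing a tangent plane (with one possibly branched) implies that $D\cap P_L$ near $p_0$ is the union of $2k\ge 4$ analytic arcs crossing at $p_0$, each of which, extended in $D$, must reach $\partial D=\Gamma$. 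But by the monotonicity of $\Pi|_\Gamma$, $\Gamma\cap P_L$ consists of at most two compact vertical subarcs of $P_L$, providing at most four boundary endpoints on $D$; a Jordan-curve count on the disk $D$ then yields a contradiction. Thus $\Pi|_{\Int(D)}\colon\Int(D)\to\Int(E)$ is a local diffeomorphism; since it is also proper (boundary points of $D$ project to $C$, not into $\Int(E)$), it is a covering of the simply connected disk $\Int(E)$ by the connected $\Int(D)$, hence a global diffeomorphism, giving~\textit{1a} and~\textit{1b}. The remaining parts of item~1 are then routine: \textit{1c} is immediate from~\textit{1b} since $\Pi|_D\colon D\to E$ becomes a continuous bijection between compact Hausdorff spaces; \textit{1d} is a direct reapplication of Lemma~\ref{lemma1}, part~2 under the $C^2$ hypothesis on $\Gamma$; and \textit{1e} combines~\textit{1b}, \textit{1c}, and~\textit{1d}.

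For item~2, the disk $D_L$ is produced by solving the Plateau problem for $\Gamma$ inside the mean-convex solid cylinder $W=\Pi^{-1}(E)$, which is mean-convex because $C$ is convex, so the cylinder $\Pi^{-1}(C)$ has inward mean curvature vector and each vertical plane $P_L$ over a supporting line of $E$ is a minimal barrier; the resulting least-area disk automatically satisfies item~1, giving~\textit{2a}. To produce $D_T$, let $\mathcal G$ be the nonempty (by item~1, since $D_L\in\mathcal G$) family of minimal $\Pi$-graphs in $W$ with boundary~$\Gamma$, set $f_T(\mathbf x)=\sup\{f(\mathbf x):\mathrm{graph}(f)\in\mathcal G\}$ on $\Int(E)$, and let $D_T$ be the graph of $f_T$; interior a priori gradient estimates for minimal graphs combined with Arzel\`a--Ascoli show that $D_T\in\mathcal G$ and realizes the supremum, which yields~\textit{2b}. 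Symmetrically one obtains $D_B$ and~\textit{2c}.

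Property~\textit{2d} then follows from the maximum principle in the mean-convex lens $W(D_T,D_B)$, whose boundary is $D_T\cup D_B\cup(\Pi^{-1}(C)\cap W(D_T,D_B))$: if a compact branched minimal surface $M$ with $\partial M\subset W(D_T,D_B)$ exited, say, above $D_T$, then the component of $M\cap\{\text{strictly above }D_T\}$ would either produce an interior tangential contact with $D_T$ (ruled out by the strong maximum principle for minimal surfaces) or, by splicing its $\Pi$-image with $D_T$ outside that image, yield an element of $\mathcal G$ whose graph exceeds $f_T$ somewhere, contradicting maximality. The uniqueness of $D_T,D_B$, and hence the final equivalence that $\Gamma$ bounds a unique compact branched minimal surface iff $D_T=D_B$, are immediate consequences of~\textit{2d} applied in both directions. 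The main obstacle in this plan is the Rado intersection count in item~1: with branch points allowed and $\Pi|_\Gamma$ merely monotone, so that $\Gamma\cap P_L$ may consist of full vertical intervals rather than two points, the local structure at $p_0$ and the boundary accounting on $P_L$ require careful bookkeeping to rule out all failure modes of $\Pi|_D$.
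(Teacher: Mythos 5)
Your overall architecture matches the paper's (a Rado-type rank argument for item~1, then a barrier/envelope construction for $D_T,D_B$), but there are genuine gaps at exactly the two points where the semidirect-product geometry forces the paper to replace classical arguments by maximum-principle sweeps. In item~1 your contradiction is never actually reached: you assert that each of the $2k\ge 4$ nodal arcs of $D\cap\Pi^{-1}(L)$ emanating from $p_0$ ``must reach $\partial D$'' and that $\Gamma\cap\Pi^{-1}(L)$ provides ``at most four boundary endpoints.'' Neither holds as stated. Since the defining function of the vertical plane is not harmonic on $D$ (only the plane itself is minimal), nothing a priori prevents a component of the nodal set from containing a closed loop; and the two compact intervals of $\Gamma\cap\Pi^{-1}(L)$ can absorb arbitrarily many arc endpoints, so no Jordan-curve count closes the argument --- the classical Rado count relies on sign alternation of a harmonic function along the boundary, which is unavailable here. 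The paper's fix, precisely the step you flag as ``the main obstacle'' but do not supply, is to note that the component of $f^{-1}(P)$ through $p_0$ either contains a simple closed curve or a properly embedded arc with both ends in the \emph{same} component of $f^{-1}(P\cap\Gamma)$, to extract from this a compact subdisk $\D'\subset\D$ with $f(\partial\D')\subset\Pi^{-1}(L)$, and then to sweep $f(\D')$ with the foliation of parallel vertical minimal planes $\Pi^{-1}(L_t)$: at the extremal plane touching $f(\D')$ the interior maximum principle gives the contradiction. Without this, item~1b (hence 1a--1e) is unproved.

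In item~2, defining $D_T$ as the graph of $f_T=\sup f$ over the family $\mathcal{G}$ of minimal $\Pi$-graphs bounded by $\Gamma$ has two problems. First, the pointwise supremum of an infinite family of minimal graphs is in general only a subsolution; to conclude $D_T\in\mathcal{G}$ one needs the family to be directed (any two members admit a common upper bound \emph{in the family}), which the paper obtains by using the union of two graphs as a barrier and minimizing area above it, and it needs the family to be uniformly bounded above --- in the non-unimodular case the horizontal planes have mean curvature $1$ and are not barriers from above, so the paper proves boundedness by a separate argument combining stability curvature estimates with the exponential area decay $dA_z=e^{-2z}\,dx\wedge dy$ of horizontal disks; your gradient-estimate-plus-Arzel\`a--Ascoli sentence does not address either point. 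Second, for item~2d the disk $D_T$ must lie above \emph{every} compact branched minimal surface $\Sigma$ with boundary $\Gamma$, not only above the graphs in $\mathcal{G}$; the paper secures this by constructing, for each such $\Sigma$, a least-area disk $D_T(\Sigma)$ in the region above $\Sigma$ and taking a maximal element of that larger, partially ordered family. These are the ingredients your proposal would need to add.
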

\begin{proof}
We first prove item~1 of the theorem.
Let $D$ be a compact (possibly branched) minimal disk with boundary
$\G $. Consider $D$ to be the image of a conformal
harmonic map $f\colon \D \to X$, where $\D $ is the closed
unit disk in $\C$ and $f|_{\partial \D}$ is
a homeomorphism to $\G$.
To prove that
$(\Pi\circ f)|_{\mbox{\small Int}(\D)} \colon \Int(\D) \to \Int(E)$ is
a diffeomorphism, we will modify a classical argument of
Rado~\cite{ra2} who proved a similar result for minimal
surfaces in $\rth$ whose boundaries have an orthogonal injective
projection to a convex planar curve.  Since $E$ is simply connected
and $(\Pi\circ f)|_{\mbox{\small Int}(\D)}\colon \Int(\D) \to \Int(E)$ is a
proper map, to prove item~1b it suffices to check that the
differential of $\Pi\circ f$ has rank two at every point of $\Int(\D)$.  By
contradiction, suppose that $p\in \Int(\D)$ is a point where the
differential of $\Pi\circ f$ has rank less than two.
In this case, either $f$ is unbranched at $p$ and the tangent plane $T_pD$ is vertical,
or $p$ is a branch point for $f$. We first consider the special case that
$f$ is unbranched at $p$ and the tangent plane $T_pD$ is vertical, or equivalently, there exists a
line $L\subset \R^2 \rtimes _A\{0\}$ passing through $(\Pi\circ f)(p)\in
\Int(E)$ such that the vertical plane $P=\Pi^{-1}(L)$ is tangent to
$D$ at the point $f(p)$.

The set $f^{-1}(P)\cap{\rm Int}(\D)$ is a
1-dimensional subset of $\D$ that contains no isolated points (at regular
points of $f$, this is a consequence of the maximum principle, while at branch
points of $f$ this follows from well known properties of branched minimal
surfaces). The $\Pi$-projection
of the boundary of $f[f^{-1}(P)]$ consists of two points in $L\cap C$ and
$f^{-1}(P)$ has locally around $p\in \Int (\D)$
the appearance of a system of at least two analytic segments crossing at $p$
(see e.g., Lemma~2 in Meeks and Yau~\cite{my1}).
Since $f(\Int(\D))$ is a proper analytic (possibly branched) surface in $\Int(W)$,
then we conclude that $f^{-1}(P)$ contains
the closure of the properly embedded analytic 1-complex
$f^{-1}(P)\cap \Int(\D)$. Furthermore, $f^{-1}(P\cap \G )$ consists of
two components, each of which is a closed interval (possibly a point;
this follows from the facts that $P\cap \G$ consists of two components,
$f(\Int (\D))=\Int (D)\subset \Int (W)$ and
$f|_{\partial \D}\colon \partial \D
\to \G $ is a homeomorphism), and
the limit set of $f^{-1}(P)\cap \Int(\D)$ intersects both of these components.
Note that each vertex in $f^{-1}(P)\cap \Int(\D)$
has a positive even number of
 associated edges with the number of edges at
 the vertex $p$ being at least 4.
As the component $\Delta $ of $f^{-1}(P)\cap \Int (\D )$
containing $p$ has at least 4 ends
or it contains  a simple closed curve,
then we conclude that either there is a simple closed curve
$\a$ in $\Delta $ or there is a
properly embedded arc $\a$ in $\Delta $
whose two ends are  contained
in the same component of $f^{-1}(P\cap \G)$;
in either case, there is a
compact subset $\D '$ of $\D$ with non-empty interior
bounded by $\a$ together with some connected subset of
$f^{-1}(P\cap \G)\subset \partial \D$.
Consider the product foliation $\{L_{t}\}_{t\in \R}$ of lines in
$\R^2\rtimes_A\{0\}$ parallel to $L=L_0$. Let $\{\Pi^{-1}(L_t)\} _t$
be the related foliation of $X$ by minimal vertical planes, and
without loss of generality, suppose that $f(\D') \cap [\cup_{t>0}
\Pi^{-1}(L_t)]\neq \mbox{\rm \O}$. Note that
$f(\parc \D')\subset \Pi^{-1} (L)$. By compactness of $\D'$, there is a
largest value $t_0>0$, such that $\Pi^{-1}(L_{t_0})\cap f(\D') \not =
\mbox{\rm \O}$. But at any point of this non-empty intersection, we
obtain a contradiction with the maximum principle applied to the
minimal surfaces $\Pi^{-1}(L_{t_0})$ and $f(\D')$. This contradiction
proves that if $p$ is not a branch point of $f$, then the differential of
$\Pi \circ f$ has rank two at $p$.

On the other hand, if $p$ is a branch point of $f$, then choose a horizontal line
$L\subset \R^2 \rtimes _A\{0\}$ through $\Pi(p)$ so that the
associated vertical plane $P=\Pi^{-1}(L)$ intersects $\G$ in exactly two points.
Then, the set $f^{-1}(P)\cap{\rm Int}(\D)$ is a
1-dimensional subset of $\D$ that contains no isolated points and
$f^{-1}(P)$ has locally around $p\in \Int (\D)$
the appearance of a system of at least two analytic segments crossing at $p$.
Arguing as in the previous case gives a contradiction, which proves that
$f$ cannot have interior branch points; therefore, item~1b holds. Clearly item~1b implies 1a and 1c.

Note that if $\G$ is of class $C^2$ and ${\Pi|_{\G }}\colon \G \to C$ a
$C^2$-immersion, then  by the statement and proof of
Lemma~\ref{lemma1}, $\Pi|_{D}$ has rank two at every point of
$\partial D$ and $D$ is an embedded disk transverse to $\partial W$
along $\G$. The remaining items of item~1 of the theorem follow directly from
item~1b and this rank-two property of $\Pi|_{ D}$ along $\partial
D$.

We next prove item~2 of the theorem.
By Theorem~1 in~\cite{my2}, there exists a  disk $D_L$ of finite
least area in $W$ with boundary $\G $ and every such
least-area disk  is an embedding.  By Lemma~\ref{lemma1}, the
interior of any compact branched minimal disk  in $X$ with $\partial
D=\Gamma$ must be contained in the interior of $W$ and so, any
least-area disk in $X$ with boundary $\G $ is contained in~$W$.  The existence of $D_L$
proves item~2a of
the theorem.

The existence of $D_T$ can be found by constructing barriers. First suppose that $\G$ is smooth.
Consider a compact branched minimal surface $\Sigma $ in $X$ with $\partial \Sigma =\G $.
By Lemma~\ref{lemma1}, $\Sigma \subset W$. Consider the closure $C_{\Sigma }$ of  the component of
$W-\Sigma$ that contains a representative of the top end of $W$, by which we mean  a closed region in $W$ above
some horizontal plane $\R^2\rtimes _A\{t_0\}$.   Then $\G$ is homotopically trivial in $C_{\Sigma }$
and  by the barrier results in~\cite{my2}, $\G$ is the boundary of a finite  least-area disk in $C_{\Sigma }$,
which must be embedded (in fact, the interior of such a disk is a $\Pi $-graph
over the interior of $E$ by item~1b of this theorem);
furthermore, any two such least-area disks in $C_{\Sigma }$ intersect only along their common boundary
$\G$. Since this collection of 'disjoint' least-area embedded disks in $C_{\Sigma}$ with boundary $\G$
forms a sequentially compact set (since they all have the same finite area in the homogeneously regular
manifold $X$) and these disks are ordered by the relative heights of their graphing functions,
then there exists a unique highest such least-area
disk above $\Sigma $ that we denote by $D_T(\Sigma )$.  Approximation results
in~\cite{my1,my2} imply that when $\G$ is only continuous, then there also
exists a similar  embedded highest least-area disk $D_T(\Sigma )$ in $C_{\Sigma}$.

We claim that all the least-area disks $D_T(\Sigma )$ defined in the last paragraph lie in a compact
region of $W$, independent of $\Sigma $. If trace$(A)=0$ (equivalently, $X$ is unimodular),
then $\mathcal{F}=\{ \R^2\rtimes _A\{ z\} \ | \ z\in \R \} $
is a minimal foliation of $X$, and a simple application of the maximum principle to any compact branched minimal
surface $\Sigma'$ in $X$ with boundary $\G $ and to the leaves of $\cF$ gives that $\max (z|_{\Sigma'})\leq
\max (z|_{\G })$, which proves the claim in this case. If $X$ is non-unimodular, we can assume after
scaling its metric that trace$(A)=2$. Suppose that the claim fails to hold. Then, there exists a sequence of least-area
disks $D_T(\Sigma _n){\subset W}$ associated to compact branched minimal surfaces $\Sigma _n$, with
$\partial \Sigma _n=\partial D_T(\Sigma _n)=\G $ for all $n$ and $\max (z|_{D_T(\Sigma _n)})\to \infty $ as $n\to \infty $
{(observe that the mean curvature comparison principle applied to $D_T(\Sigma _n)$ and to the leaves of
$\mathcal{F}$ ensures that $\min (z|_{D_T(\Sigma _n)})\geq \min (z|_{\G})$).}
Given $n\in \N$, let $p_n\in D_T(\Sigma _n)$ be a point where $z|_{D_T(\Sigma _n)}$ attains its maximum value.
By taking $n$ large enough, we can assume that the intrinsic distance from $p_n$ to $\G $ is greater than 2. As
the $D_T(\Sigma _n)$ are stable, they have uniform curvature estimates away from their boundaries;
in particular, the norm of the second fundamental form of $D_T(\Sigma _n)$ in the intrinsic ball of radius 1 centered
at $p_n$ is less than some positive constant $C$ independent of $n$. This implies that there exists
$\ve >0$ such that, for $n$ large enough, the intrinsic disk $D(p_n,\ve )$ of
radius $\ve $ in $\R^2\rtimes _A\{ z(p_n)\} $ centered at $p_n$
satisfies the following property:
\par
\vspace{.2cm}
(P)\ Every vertical line passing through a point in $D(p_n,\ve )$ intersects $D_T(\Sigma _n)$
near $p_n$.
\par
\vspace{.2cm}
\noindent
Property (P) follows from the following observation, whose proof we leave to the reader:
\par
\vspace{.2cm}
(O)\ Let $\Sigma_1,\Sigma_2$ be two smooth surfaces in a homogeneous 3-manifold $X$
 that are tangent at a common point~$p$, such that the intrinsic distance from $p$ to the
boundaries of these surfaces is at least 1.
If the norms of the second fundamental forms of these surfaces are less than some $C>0$,
 then there exists an $\ve=\ve(C)\in(0,1/2)$, less than the injectivity radius of $X$,
such that every point in the intrinsic ball $B_{\Sigma_1}(p,\ve)=\{ q\in \Sigma _1\ | \
d_{\Sigma _1}(p,q)<\ve \} $ (here $d_{\Sigma _1}$  denotes intrinsic distance in $\Sigma _1$)
is a normal graph over a subdomain of the corresponding intrinsic ball
$B_{\Sigma_2}(p,2\ve)$, of absolute value less than $\ve$.
\par
\vspace{.2cm}

\noindent
With property (P) at hand, we next find the desired contradiction that will prove our claim
in the case $X$ is non-unimodular.
Since the area element $dA_z$ for the restriction of the canonical metric
to the plane $\R^2\rtimes _A\{ z\} $ is $dA_z=e^{-2z}\, dx\wedge dy$, then we conclude that
$$
\text{Area}(E)\geq \text{Area}[\Pi (D(p_n,\ve ))]=e^{2z(p_n)}\text{Area}[D(p_n,\ve )]=\pi \ve ^2e^{2z(p_n)},
$$
which implies that $z(p_n)$ is bounded from above, a contradiction. Now our claim is proved.

Once we know that all the least-area disks $D_T(\Sigma )$ lie in a compact
region of $W$ independent of $\Sigma $, then we conclude that they also have
uniformly bounded area by the following argument:
consider the disk $D_{z_0}:= [\R^2\rtimes _A\{ z_0\} ]\cap \Pi^{-1}(E)$
where $z_0\gg 1$ is chosen sufficiently large so that
$D_T(\Sigma )$ lies under $D_{z_0}$ for every compact branched minimal surface $\Sigma $ in $X$ with boundary $\G $.
Consider the union $D'$ of  $D_{z_0}$ with the annular portion of $\Pi^{-1}(\G )$ below $D_{z_0}$ and above $\G $.
$D'$ clearly has finite area.  Since $D_T(\Sigma )$ has least area
among surfaces in the region of $W$ above $D_T(\Sigma )$ with boundary $\G $, then
the area of $D'$ is greater than or equal to the area of $D_T(\Sigma )$, as desired.

Given two of the disks,  $D_T(\Sigma_1)$, $D_T(\Sigma_2)$, then using their union as a barrier,
our previous arguments demonstrate that there is a least-area graphical disk $D'$ with boundary $\G$
that lies in the region of $\Pi^{-1}(E)$ above both of them;
here "above" means in the sense that the $\Pi $-graphing function $h\colon \Int(E)\to \R$ for $\Int(D')$
is greater than or equal to the graphing functions for the disks  $D_T(\Sigma_1)$, $D_T(\Sigma_2)$.
This notion of "above" induces a partial ordering on the set of disks of the form $D_T(\Sigma)$.
A standard compactness argument
using that the areas of these disks are uniformly bounded proves the existence of a minimal
disk $D_T$ with boundary $\G $  that is a maximal element in the partial ordering.
By item~1a of this theorem, $D_T$ is embedded,
and by construction, $D_T$ has least area among all compact surfaces in the closed
region of $W$ above $D_T$. This proves item~2b of the theorem. Item~2c about $D_B$ can be proven by
similar reasoning as in the proof of item~2b for $D_T$.

It remains to prove item~2d of the theorem.
Suppose that $M$ is a compact branched minimal surface in $X$ whose
boundary lies in the closed set $W(D_T,D_B)\subset W$
between the graphs $D_T$ and $D_B$. Note that $M\subset W$ by the arguments in the proof of Lemma~\ref{lemma1}.
Suppose that some point $p$ of $M$ lies in $W-W(D_T,D_B)$. First suppose that $p$ lies in the
closed  region
$D_T^+\subset W$ that lies above $D_T$.  Then using $D_T\cup (M\cap D_T^+)$ as a barrier, we obtain
a minimal disk $D_T'$ of least-area that lies above $D_T$, which contradicts that $D_T$ is the highest
disk that bounds $\G$.  This contradiction implies $M$ does not intersect the
interior of $D_T^+$; similar arguments imply that  $M$ does not intersect the
interior of the region of $W$ that lies below $D_B$.  Hence, the main statement of item~2d is proved.
Elementary separation arguments now imply
that $D_T$ and $D_B$ are uniquely defined, and clearly if $D_T=D_B$, then every compact branched minimal
surface in $X$ with boundary $\G$ is equal to $D_T$.
This completes the proof of Theorem~\ref{lemma1}.
\end{proof}

By item~2d of Theorem~\ref{lemma2},  if
a curve $\G$ satisfying the hypotheses of Theorem~\ref{lemma2}
is the boundary of a unique minimal disk, then it is also the boundary of
a unique compact branched minimal surface.  Our belief that
graphical minimal disks bounding such a $\G$ are unique lead us to the following conjecture.

\begin{conjecture} \label{conj}
 Let $X=\R^2\rtimes_A \R$ be a metric semidirect product.
Suppose $E$ is a compact convex
disk in $\R^2\rtimes_A\{0\}$, $C=\partial E$ and $\G\subset
\Pi^{-1}(C)$ is a continuous simple closed curve such that
$\Pi\colon \Gamma \to C$ monotonically parameterizes $C$.
Then  the compact embedded disk $D_L$ of finite least area
given in Theorem~\ref{Plateau:unique} is the unique compact branched minimal surface in $X$ with boundary
$\G$.
\end{conjecture}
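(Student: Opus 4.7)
The plan is to reduce the conjecture, via item 2d of Theorem~\ref{lemma2}, to the assertion that the highest and lowest minimal disks $D_T$ and $D_B$ coincide. By item 1b of the same theorem, both are smooth $\Pi$-graphs of functions $u_T, u_B\colon \Int(E)\to \R$ with $u_B\leq u_T$ on $\Int(E)$. Thus the conjecture becomes a comparison principle for the minimal graph equation in $X$ coupled with sufficient control of $u_T-u_B$ near $\partial E$, for the rough boundary data encoded by $\Gamma$.

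First I would write out the minimal surface equation for a $\Pi$-graph $z=u(x,y)$ using formula~\eqref{eq:13} for the canonical metric. This yields a divergence-form quasilinear elliptic PDE $L(u)=0$ with smooth coefficients depending on $(x,y,u,\nabla u)$. The genuine dependence on $u$ reflects the fact that vertical translation is not in general an isometry of $X$, which is precisely why Rado's classical uniqueness argument for $\R^3$ does not transplant directly. Setting $w=u_T-u_B\geq 0$ and subtracting $L(u_T)=L(u_B)=0$ via the fundamental theorem of calculus along the segment joining $(u_B,\nabla u_B)$ and $(u_T,\nabla u_T)$, one obtains a linear elliptic equation $\mathcal{L}(w)=0$ with smooth bounded coefficients on $\Int(E)$. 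The strong maximum principle then forces either $w\equiv 0$, in which case we are done, or $w>0$ throughout $\Int(E)$.

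The second alternative must be excluded using boundary information. Let $S\subset C$ denote the set of points $p$ with $\Pi^{-1}(p)\cap \Gamma$ a single point; by the monotone projection hypothesis, $C\setminus S$ is at most countable (the intervals $\Pi^{-1}(p)\cap\Gamma$ over distinct $p\in C$ are pairwise disjoint), hence $S$ is dense in $C$, and $w\to 0$ upon approach to any $p\in S$ from $\Int(E)$. When $\Pi|_\Gamma$ is a homeomorphism (so $S=C$, as in item 1c of Theorem~\ref{lemma2}), Schauder estimates up to the boundary combined with the Hopf boundary point lemma for $\mathcal{L}$ contradict the strict interior positivity of $w$, establishing the conjecture in this graphical case. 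I would then attack the general continuous case by approximating $\Gamma$ by $C^2$ curves $\Gamma_n$ that project homeomorphically onto slightly shrunken convex curves $C_n\subset C$, applying the previous case to obtain unique minimal disks $D_n$, and extracting a subsequential limit using the uniform area bounds and the interior curvature estimates for stable minimal graphs employed in essentially the same way as in the proof of item 2b of Theorem~\ref{lemma2}.

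The hard part, and the reason this remains conjectural, is the passage to the limit at the at-most-countable set $C\setminus S$, where $\Gamma$ contains nontrivial vertical segments. One must verify that the subsequential limit of the $D_n$ is genuinely bounded by all of $\Gamma$ (rather than by a subset that collapses the vertical intervals) and that it equals both $D_T$ and $D_B$ independently of the approximation; equivalently, one must rule out the possibility that $w$ fails to extend continuously to $\partial E$ along the exceptional countable set. This amounts to a delicate boundary regularity and compactness analysis near the vertical segments of $\Gamma$, where $D_T$ and $D_B$ approach $\Gamma$ only continuously and where the ambient geometry of $X$ enters nontrivially through the non-translation-invariance of the canonical metric; that this analysis genuinely requires ideas beyond those developed in the present paper is the main reason I would state the result only as a conjecture.
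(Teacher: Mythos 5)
The statement you are addressing is stated in the paper only as a conjecture; the authors do not prove it. They observe that it holds trivially when $X$ is unimodular and $\G\subset\R^2\rtimes_A\{0\}$, and they prove one genuinely nontrivial special case, Proposition~\ref{circles1} (non-unimodular $X$, $\G$ a $C^2$ convex curve in $\R^2\rtimes_A\{0\}$ almost-transverse to $F_3^H$), by a flux argument: the right invariant Killing field $F_3$ has zero flux across both $D_T$ and $D_B$, while the boundary maximum principle forces a strict inequality between the corresponding boundary integrands along $\G$. Your reduction to showing $D_T=D_B$ via item~2d of Theorem~\ref{lemma2}, and your diagnosis that the obstruction is the failure of vertical translation to be an isometry, both agree with the paper's point of view.

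However, your claim to settle the case where $\Pi|_{\G}$ is a homeomorphism has a genuine gap. Subtracting the two equations does yield a linear elliptic operator $\mathcal{L}=a^{ij}\partial_{ij}+b^i\partial_i+c$ annihilating $w=u_T-u_B\geq 0$, but the zeroth-order coefficient $c$ is an average along the segment of Jacobi-type potentials of the form $|\sigma|^2+\mathrm{Ric}(N)$ (with $\sigma$ the second fundamental form), and it has no sign in these geometries; this is exactly the quantitative expression of the non-invariance of the metric under $(x,y,z)\mapsto(x,y,z+t)$, and it is what distinguishes this problem from Rado's in $\R^3$, where the operator depends only on $\nabla u$ and $c\equiv 0$. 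With $c$ of indefinite sign, the configuration ``$w>0$ in $\Int(E)$, $w=0$ on $\partial E$, Hopf's lemma at the boundary'' is not a contradiction: a positive first Dirichlet eigenfunction of $\Delta+\lambda_1$ satisfies all three conditions, including the strictly negative outward normal derivative that Hopf's lemma predicts. To close the argument one needs either $c\leq 0$ (unavailable here), a sliding family of ambient isometries (unavailable here), or an integral identity of the kind used in Proposition~\ref{circles1}. A secondary problem is that the Schauder estimates up to $\partial E$ that you invoke require uniform ellipticity of the graph equation up to the boundary, whereas $D_T$ and $D_B$ may become vertical along $\G$ (transversality to $\Pi^{-1}(C)$ in item~1d of Theorem~\ref{lemma2} does not prevent $|\nabla u_i|\to\infty$ there, and for merely continuous $\G$ even that transversality is not asserted). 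Your closing assessment that the countable set $C\setminus S$ presents a further difficulty is reasonable, but the argument already fails before that point, so the homeomorphism case should also be regarded as open.
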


The uniqueness property stated in Conjecture~\ref{conj} is clear in the
particular case that $X=\R^2\rtimes_A\R$ is unimodular and $\Gamma$ is
contained in the plane $\R^2\rtimes_A\{0\}$, by the maximum principle applied
to $D_L=E$ and to the foliation of minimal planes $\{ \R^2\rtimes_A\{z\} \ | \ z\in \R \}$.

Next we prove the following particular case of Conjecture~\ref{conj} for the case that $X$ is non-unimodular.%

\begin{proposition}
\label{circles1}
Let $X= \R^2\rtimes_A\R$ be a non-unimodular
metric semidirect product, and let $F_3(x,y,z)=
F_3^H+\partial _z$ be the right invariant vector field in $X$ given by \eqref{eq:6}.
Suppose that $E\subset \R^2\rtimes_A\{0\}$ is a compact convex disk
with $C^2$ boundary $\G$ that is {\em almost-transverse to $F_3^H$,} in the sense that
the inner product of the outward pointing unit conormal to $E$ along $\G$ with $F_3^H$ is greater than or
equal to zero.
Then, $\G$ is the boundary of a unique compact branched minimal surface which must therefore be the least-area,
embedded minimal disk $D_L$ given in
Theorem~\ref{Plateau:unique}.
\end{proposition}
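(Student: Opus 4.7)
My plan is to apply item~2d of Theorem~\ref{lemma2} to reduce the uniqueness claim to the equality $D_T=D_B$, and to prove this equality by contradiction. Suppose $D_T\neq D_B$. Since $\G=\partial E$ is $C^2$ and $\Pi|_{\G}$ is the identity, items~1c and~1e of Theorem~\ref{lemma2} show that both $D_T$ and $D_B$ are smooth $\Pi$-graphs over $E$; write their graph functions as $h_T\geq h_B$ with $h_T|_{\partial E}=h_B|_{\partial E}=0$. Because the normalization $\mathrm{trace}(A)=2$ makes each horizontal leaf $\R^2\rtimes_A\{z\}$ have mean curvature $1$ with respect to $\partial_z$, the geometric maximum principle applied at any interior minimum of the height function on a compact minimal surface with boundary in $\R^2\rtimes_A\{0\}$ forces such a surface to lie in $\{z\geq 0\}$. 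Hence $h_T,h_B\geq 0$, and the strong maximum principle (applied to $h_T-h_B$, which solves a linear elliptic equation) together with the non-minimality of the horizontal planes yields $h_B>0$ and $h_T>h_B$ strictly on $\Int(E)$.

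The heart of the argument will be a sliding argument driven by the right invariant, hence Killing, vector field $F_3$, whose flow is the one-parameter group of isometries $\phi_t(x,y,z)=(e^{tA}(x,y),z+t)$. First I would interpret the almost-transversality hypothesis $\langle F_3^H,\nu_E\rangle\geq 0$ on $\G$ as saying that $-F_3^H$ weakly points into $E$ along $\partial E$, so by Nagumo's invariance theorem $e^{tA}(E)\supseteq E$ for every $t\geq 0$; in particular $e^{tA}(\G)\subset \R^2\setminus\Int(E)$. Since $\phi_t$ preserves vertical lines, each $\phi_t(D_B)$ remains a smooth minimal $\Pi$-graph, now over $e^{tA}(E)$ and with boundary $\phi_t(\G)\subset \R^2\rtimes_A\{t\}$; its restriction to $E$ has graph function $\widetilde h_t(p):=h_B(e^{-tA}p)+t$. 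One then checks that $\widetilde h_t\geq t>0=h_T$ on $\partial E$, while $\widetilde h_t-h_T<0$ at interior points where $h_T>h_B$ for sufficiently small $t>0$, so the intermediate value theorem produces interior crossings and $\phi_t(D_B)\cap D_T\neq \emptyset$ for all small $t>0$.

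I then set $t^*:=\inf\{t\geq 0\mid \phi_t(D_B)\cap D_T=\emptyset\}$, which satisfies $0<t^*<\infty$: the lower bound is the previous step, while the upper bound follows from the disjointness of $\phi_t(D_B)\subset\{z\geq t\}$ from $D_T\subset\{z\leq\max h_T\}$ for $t>\max h_T$. Hence at $t=t^*$ there must be a non-transverse tangential contact between $\phi_{t^*}(D_B)$ and $D_T$. The step I expect to be the main obstacle is showing that this first contact is \emph{interior-interior}: boundary points of $\phi_{t^*}(D_B)$ lie at $z=t^*>0$ and project to $e^{t^*A}(\G)\subset \R^2\setminus\Int(E)$, while $\partial D_T=\G$ lies at $z=0$ and $D_T$ projects into $\overline{E}$. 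The combination of these height and projection asymmetries rules out every contact that involves a boundary point of either surface.

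Once the contact is forced to occur at interior points of both surfaces, the interior maximum principle for two tangent minimal surfaces with matching normal shows that $\phi_{t^*}(D_B)$ and $D_T$ coincide in a neighborhood of the contact point, and real-analyticity of minimal surfaces in the real-analytic Riemannian manifold $X$ upgrades this to global coincidence as compact embedded surfaces. But then $\phi_{t^*}(\G)=\G$, contradicting $\phi_{t^*}(\G)\subset \R^2\rtimes_A\{t^*\}$ and $\G\subset \R^2\rtimes_A\{0\}$ with $t^*>0$. This contradiction establishes $D_T=D_B$, and by item~2d of Theorem~\ref{lemma2} the least-area disk $D_L$ from Theorem~\ref{Plateau:unique} is the unique compact branched minimal surface in $X$ with boundary~$\G$.
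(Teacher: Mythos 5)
Your proof is correct, but it takes a genuinely different route from the paper's. The paper's argument is a flux computation: since $F_3$ is Killing and $D_T,D_B$ are minimal, the divergence theorem forces $\int_\G \langle \eta_i,F_3^H\rangle + \int_\G\langle \eta_i,\partial_z\rangle=0$ for $i=T,B$; the boundary maximum principle (giving that $D_B$ lies strictly below $D_T$ near $\G$, both transverse to $\R^2\rtimes_A\{0\}$) yields $\langle\eta_T,\partial_z\rangle<\langle\eta_B,\partial_z\rangle<0$ pointwise, while almost-transversality yields $0\leq\langle\eta_T,F_3^H\rangle\leq\langle\eta_B,F_3^H\rangle$, and adding the two strict inequalities contradicts the vanishing of both fluxes. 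You instead run an Alexandrov-type sliding argument along the one-parameter group of isometries generated by the same Killing field, with almost-transversality entering as the forward-invariance $E\subseteq e^{tA}(E)$ rather than as a sign condition on a boundary integrand. Both are legitimate: the flux proof is shorter and only needs information along $\G$ (conormal comparison via the Hopf boundary point lemma), whereas your proof is more elementary (only the interior maximum principle) and makes the geometric role of the hypothesis transparent — it is exactly what guarantees that the translated graph $\phi_t(D_B)$ stays defined over all of $E$ with its boundary projecting outside $\Int(E)$, so that the first contact is interior--interior. One detail you should tighten: the one-sidedness at $t^*$ does not follow merely from the contact being tangential. The clean way is to note that for the disjoint parameters $t_n\searrow t^*$ the difference $H_{t_n}-h_T$ of graphing functions is nonvanishing on the connected set $E$ and positive on $\partial E$, hence positive on all of $E$; passing to the limit gives $H_{t^*}-h_T\geq 0$ with an interior zero, which is the configuration the interior maximum principle needs, and then the open-closed argument gives $H_{t^*}\equiv h_T$ on $E$, contradicting $H_{t^*}\geq t^*>0=h_T$ on $\partial E$.
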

\begin{proof}
Arguing by contradiction,
suppose that the proposition fails to hold. Then item~2 of Theorem~\ref{Plateau:unique}
implies that the embedded minimal disks $D_T,D_B$ described there with boundary
$\G$ are not equal.
Let $\eta_T,\eta_B$ denote the respective outward pointing unit conormals
to these minimal disks along $\G $.  Since
$F_3$ is a Killing vector field and $D_T,D_B$ are minimal, then the divergence theorem
gives
\begin{equation} \label{eqFLUX}
0=\int _{D_i}\div _{D_i}(F_3^{T_i})=\int_\G \langle \eta_i, F_3\rangle
=\int_\G \langle \eta_i, F_3^H\rangle+\int_\G \langle \eta_i, \partial_z\rangle,
\end{equation}
where $i=T,B$ and $\div _{D_i}(F_3^{T_i})$ denotes the intrinsic divergence in $D_i$
of the tangential component $F_3^{T_i}$ of $F_3$ to $D_i$.

 On the other hand, observe that by the boundary maximum principle, $D_B$ lies strictly
 below $D_T$ near their common boundary $\G $.

As $X$ is non-unimodular, then $\R^2\rtimes _A\{ 0\} $ has mean curvature 1, and so
both $D_T,D_B$ lie in $\R^2\rtimes_A [0,\infty)$
(this follows from the interior maximum principle applied
to $D_i$, $i=T,B$, and to $\R^2\rtimes_A \{ z_0\} $ for a suitable $z_0<0$)
and $D_T,D_B$ are transverse to $\R^2\rtimes_A\{0\}$ along $\G $ (by the
boundary maximum principle applied to $D_i$, $i=T,B$, and to $\R^2\rtimes _A\{ 0\} $).
Therefore, we deduce that
\begin{equation}
 \label{eqFLUX2}
 \langle \eta_T, \partial_z\rangle
< \langle \eta_B, \partial_z\rangle<0.
\end{equation}
Expressing $\eta _T$ as a sum of its horizontal and vertical components, we have
\[
\eta _T=\langle \eta _T,\eta _H\rangle \eta _H+\langle \eta _T,E_3\rangle E_3,
\]
where $\eta _H$ is the outward pointing unit conormal to $E$ along $\G $.
As $\langle F_3^H,E_3\rangle =0$, then
\[
\langle \eta _T,F_3^H\rangle =\langle \eta _T,\eta _H\rangle \ \langle \eta _H,F_3^H\rangle .
\]
Note that $\langle \eta _H,F_3^H\rangle \geq 0$ since $\G $ is almost transverse to $F_3^H$,
and that $\langle \eta _T,\eta _H\rangle \geq 0$ as $D_T$ lies in $\Pi^{-1}(E)$. Thus
$\langle \eta _T,F_3^H\rangle \geq 0$. Arguing similarly with $D_B$ we will obtain
$\langle \eta _B,F_3^H\rangle =\langle \eta _B,\eta _H\rangle \ \langle \eta _H,F_3^H\rangle $.
As $D_B$ lies below $D_T$ near $\G $, then $\langle \eta _T,\eta _H\rangle
\leq \langle \eta _B,\eta _H\rangle $. Altogether we deduce that
\begin{equation}
\label{eqFLUX3}
0\leq\langle \eta_T, F_3^H\rangle \leq \langle \eta_B, F_3^H\rangle.
\end{equation}
The inequalities \eqref{eqFLUX2} and \eqref{eqFLUX3} imply
\[
\int_\G \langle \eta_T, F_3^H\rangle+\int_\G \langle \eta_T, \partial_z\rangle
<\int_\G \langle \eta_B, F_3^H\rangle+\int_\G \langle \eta_B, \partial_z\rangle,
\]
which contradicts \eqref{eqFLUX}.  This contradiction proves the proposition.
%
\end{proof}

\section{Asymptotic  behavior of certain compact minimal surfaces in
  non-unimodular metric Lie groups.}
  \label{sec:4}

If $I_2$ is the identity matrix in $\mathcal{M}_2(\R )$, then the metric
Lie group  $X=\R^2\rtimes_{I_2} \R$ is isometric to hyperbolic 3-space, and the
planes $\R^2\rtimes_A\{t_0\}$ correspond to a family of horospheres with the same
point at the ideal boundary of $X$. In this case,
every  circle $\G _R$ of Euclidean radius $R>0$
in  the (flat) plane $\R^2 \rtimes_{I_2}\{0\} $ is the
boundary of a least-area compact disk
$\Sigma_R\subset \R^2 \rtimes_{I_2}[0, \infty) $,
which is contained in a totally geodesic hyperbolic plane $H$ in
$X$; in fact, $\Sigma _R$ is a geodesic disk of some hyperbolic radius in $H$.
  Furthermore, as the Euclidean radius of $\G _R$ in $\R^2 \rtimes_{I_2}\{0\}$ goes to infinity,
then the Riemannian radius of $\Sigma _R$ also goes to infinity and the constant angle that
$\Sigma _R$ makes with
$\R^2 \rtimes_{I_2}\{0\} $ approaches $\pi/2$, see Figure~\ref{fig1}.
\begin{figure}
\begin{center}
\includegraphics[height=6cm]{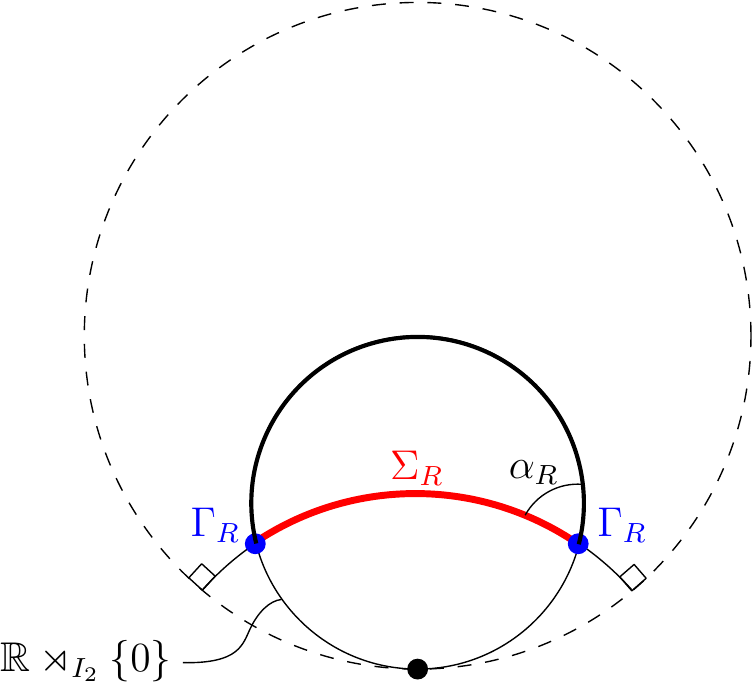}
\caption{The constant angle $\a _R$ that the least-area disk $\Sigma _R$ makes with the horosphere
$\R^2\rtimes _{I_2}\times \{ 0\}$ along $\partial \Sigma _R=\G_R$ approaches $\pi /2$ as the
Euclidean radius $R>0$ of the circle $\G _R$ in the flat plane $\R^2\rtimes _{I_2}\{ 0\}$ tends to infinity.
{Here we are using the Poincar\'e ball model of $\Hip ^3$, which is}
{isometric to $\R^2\rtimes _{I_2}\R $.}}
\label{fig1}
\end{center}
\end{figure}
The next theorem
shows that a similar result holds in any non-unimodular metric Lie group, since every such
a non-unimodular metric Lie group is isomorphic and isometric to a non-unimodular semidirect product.

\begin{theorem}
 \label{thm:Plateau}
Let $X=\R^2\rtimes_A \R$ be a metric semidirect product, where  $A\in\mathcal{M}_2(\R )$
satisfies equation \eqref{Axieta}.
\begin{enumerate}[1.]
\item For each $\ve>0$, there exists a $\de>0$ such that the following
property holds. Given a $C^2$ simple closed convex curve $\G \subset
\R^2 \rtimes_A \{0\}$ with geodesic curvature in $\R^2 \rtimes_A \{
0\} $ less than $\de$ in absolute value, then every compact branched minimal surface
$M\subset X$ with $\partial  M=\G$ satisfies that
\begin{enumerate}[(a)]
\item $M\subset \R^2\rtimes _A[0,\infty )$.
\item $\Int(M)\subset \Pi ^{-1}(\Int (E))\cap [\R^2\rtimes
_A(0,\infty)]$, where $E$ is the convex disk in $\R^2\rtimes_A \{0\}$ bounded by $\G $.
\item $\langle \eta,\partial_z\rangle <-1+\ve$ along $\G $, where $\eta $ is the
exterior unit conormal vector to $M$ along its boundary.
\end{enumerate}
\item Suppose that $\G(n)\subset \R^2 \rtimes_A \{0\}$  is a
sequence of $C^2$ simple closed convex curves  with $\vec{0}=(0,0,0)\in \G(n) $
such that the geodesic curvatures of  $\G (n)$ converge uniformly to $0$
and the curves $\G (n)$ converge on compact subsets to a line $L$ with $\vec{0}\in
L$ as $n \to \infty$. Then, for any sequence $M(n)$ of compact branched minimal disks
(or compact stable minimal surfaces)
with $\partial M(n)=\G(n)$,  the surfaces $M(n)$ converge $C^2$ on compact
subsets of $X$ to the vertical halfplane
$\Pi^{-1}(L)\cap[\R^2 \rtimes_A [0,\infty)]$, as $n\to \infty $.
\end{enumerate}
\end{theorem}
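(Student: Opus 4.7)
The plan is to prove items (a) and (b) of Part 1 by maximum-principle arguments, deduce item (c) from Part 2 by contradiction, and prove Part 2 itself by a curvature-estimate compactness argument.

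For (a) and (b), the normalization \eqref{Axieta} forces $\mbox{trace}(A)=2$, so every horizontal leaf $\R^2\rtimes_A\{z\}$ has mean curvature $1$ with respect to $E_3$. If $M$ had an interior point with $z<0$, at the lowest such point the horizontal leaf would be tangent to $M$ from below, violating the mean-curvature comparison $1>0$; this proves (a), and combining the same boundary comparison with Lemma~\ref{lemma1}, item~1, proves (b). For (c) I argue by contradiction using item 2: if (c) fails for some $\ve_0>0$, there are $\Gamma_n$, $M_n$ and $p_n\in \Gamma_n$ with geodesic curvatures of $\Gamma_n$ tending to zero and $\langle \eta_n(p_n),\partial_z\rangle\geq -1+\ve_0$. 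Since left translations by elements of $\R^2\rtimes_A\{0\}$ are isometries of $X$ preserving both the horizontal foliation and $\partial_z$, after translating we may assume $p_n=\vec 0$. Passing to a subsequence, the unit tangents to $\Gamma_n$ at $\vec 0$ converge to a direction $v$, and the vanishing-geodesic-curvature hypothesis forces $\Gamma_n$ to converge on compact subsets to the line $L$ through $\vec 0$ in the direction of $v$. Part 2 (applied, in the branched-disk case, to the least-area disk of Theorem~\ref{lemma2}, item~2a, if stability is needed) then yields $M_n\to \Pi^{-1}(L)\cap [\R^2\rtimes_A[0,\infty)]$ in $C^2$ on compact subsets, so $\eta_n(\vec 0)\to -\partial_z$, contradicting $\langle \eta_n(\vec 0),\partial_z\rangle\geq -1+\ve_0$.

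For Part 2, I would first establish uniform interior curvature bounds on the sequence $\{M(n)\}$: in the stable case these come from the classical Schoen-type curvature estimate for stable minimal surfaces in the homogeneously regular manifold $X$, and in the branched-disk case, Theorem~\ref{lemma2}, item~1b, realizes $M(n)$ as a $\Pi$-graph of a nonnegative function $u_n$ over $E(n)$ with zero boundary values, from which interior gradient estimates for the minimal-graph equation yield $C^2$-bounds on compact subsets at positive distance from $\Gamma(n)$. A subsequence of $\{M(n)\}$ then converges $C^2$ on compact subsets to a minimal lamination $\mathcal{L}$ of an open subset of $X$, and by items (a)--(b) of Part 1 together with Lemma~\ref{lemma1}, $\mathcal{L}\subset \Pi^{-1}(\overline H)\cap [\R^2\rtimes_A[0,\infty)]$, where $H$ is the open halfplane of $\R^2\rtimes_A\{0\}$ bounded by $L$ that contains the compact-set limits of $\Int E(n)$.

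The main obstacle is identifying $\mathcal{L}$ with the vertical minimal halfplane $P:=\Pi^{-1}(L)\cap [\R^2\rtimes_A[0,\infty)]$. My approach is to show first that any leaf of $\mathcal{L}$ meeting a point of $L$ has $L$ in its boundary and, being trapped in $\Pi^{-1}(\overline H)\cap \{z\geq 0\}$, must be tangent to $P$ along $L$: a horizontal tangent plane is excluded by the boundary maximum principle against the mean-curvature-one leaf $\R^2\rtimes_A\{0\}$, and any intermediate tilt is ruled out by the Hopf boundary lemma applied to $P$ and this leaf, both minimal and sharing the boundary $L$. Unique continuation then forces this leaf to coincide with $P$. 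Extra leaves of $\mathcal{L}$ in $\Pi^{-1}(H)\cap \{z>0\}$ are excluded by sweeping with the minimal foliation $\{\Pi^{-1}(L_t)\}_{t>0}$ of vertical planes parallel to $L$ pushed into $H$: any such extra leaf would be touched interiorly by some $\Pi^{-1}(L_t)$, contradicting the interior maximum principle. Once $\mathcal{L}=P$ is established, the desired $C^2$-convergence $M(n)\to P$ on compact subsets follows from the uniform curvature bounds.
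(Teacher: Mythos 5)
Your treatment of items 1(a) and 1(b) matches the paper's (mean curvature comparison with the leaves $\R^2\rtimes_A\{z_0\}$, which have mean curvature $1$ because \eqref{Axieta} forces $\mathrm{trace}(A)=2$, plus Lemma~\ref{lemma1}). But the core of the theorem is elsewhere, and there you have a genuine gap. You invert the paper's logic (the paper proves 1(c) first and feeds it into Part~2; you derive 1(c) from Part~2), which would be legitimate only if your proof of Part~2 were self-contained and correct. It is not: the decisive step --- showing that a limit leaf with boundary $L$, trapped in the quarter-space $\Pi^{-1}(\overline H)\cap\{z\ge 0\}$, cannot leave $L$ at an angle strictly between $0$ and $\pi/2$ --- is asserted via ``the Hopf boundary lemma applied to $P$ and this leaf, both minimal and sharing the boundary $L$.'' The Hopf boundary point lemma requires the two surfaces to be \emph{tangent} at the boundary point with one on one side of the other; two minimal surfaces sharing the boundary curve $L$ but meeting transversally along it violate no maximum principle (think of two half-planes in $\R^3$ with a common edge). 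Horizontal tangency is excluded by your mean-curvature comparison and vertical tangency does force coincidence with $P$ by Hopf, but every intermediate tilt survives all local arguments. This is exactly the hard case, and it is what the paper's Assertion~\ref{ass11.6} is built to kill: one first shows the limit leaf $D_\infty$ is invariant under translation along $L$ (a delicate sliding argument using the ordering property $(\star)$ of the lowest disks $D_B(r)$ from Theorem~\ref{lemma2}), so that $D_\infty$ is ruled by horizontal lines, and then one runs a flux computation in the quotient $X/\mathcal{I}$ using the exponential decay of horizontal right-invariant Killing fields (Proposition~\ref{propos2app}, split according to the sign of the Milnor $D$-invariant). No soft replacement is available: the conclusion genuinely uses the non-unimodular structure beyond the one-sided confinement, and your argument never invokes it at this point.

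Two secondary problems would also need repair even if the main gap were filled. First, item 1(c) is asserted for \emph{every} compact branched minimal surface with boundary $\G$, while Part~2 applies only to branched disks or stable surfaces; your reduction must therefore pass through a graphical barrier lying below $M_n$ with the same boundary (e.g.\ the disk $D_B$ of Theorem~\ref{lemma2}) and compare conormals, rather than applying Part~2 to $M_n$ itself --- this is fixable but not spelled out. Second, your exclusion of extra leaves of $\mathcal L$ by sweeping with the vertical planes $\Pi^{-1}(L_t)$ presupposes a first contact plane, which need not exist for a complete, boundaryless leaf unbounded in the $t$-direction; the paper instead rules such leaves out by analyzing the behavior near the infimum of their $z$-coordinate (a minimal graph cannot be asymptotic to the mean-curvature-one plane $\R^2\rtimes_A\{z_0\}$).
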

\begin{proof}
We will first prove a key assertion which, together with
properties of convex simple closed curves in $\R^2$, will lead to the proof of the
theorem.

\begin{assertion}
\label{ass11.6}
Given $\ve \in (0,\frac{\pi }{2})$, there exists $R=R(\ve )>0$ such that for any $r>R$
the following property holds. Let $C_r\subset \R^2\rtimes _A\{ 0\} $ be a circle of Euclidean radius
$r>0$ and let $D_B(r)$ be the minimal disk given by item 2c of Theorem~\ref{lemma2} with boundary $C_r$.
Then, the angle that the tangent plane to
$D_B(r)$ makes with $\R^2\rtimes _A\{ 0\}$ is greater than $\frac{\pi }{2}-\ve $ at every
point of $C_r$.
\end{assertion}
\begin{proof}[Proof of the Assertion]
Arguing by contradiction, suppose there exists $\ve \in (0,\frac{\pi }{2})$, a sequence of
circles $C_{r(n)}\subset \R^2\rtimes _A\{ 0\}$ with Euclidean radii $r(n)\nearrow \infty $ and points
$p_n\in C_{r(n)}$ such that the angle that the tangent plane to
$D_B(r(n))$ at $p_n$ makes with $\R^2\rtimes _A\{ 0\}$ lies in $(0,\frac{\pi }{2}-\ve ]$.
After left translating these disks we can assume that $p_n=\vec{0}$, and after choosing
a subsequence, the $C_{r(n)}$ converge as $n\to \infty $ to a line $L\subset \R^2\rtimes _A\{ 0\} $ and
the closed disks $E_{r(n)}\subset \R^2\rtimes _A\{ 0\} $ bounded by $C_{r(n)}$ converge to
one of the two closed halfplanes bounded by $L$, which we denote by  $L^+$.

By item 2c of Theorem~\ref{lemma2}, given $n\in \N$ the unique compact embedded, stable minimal disk
$D_B(r(n))$ associated to the circle $C_{r(n)}$ is a $\Pi $-graph
over $E_{r(n)}$, and item 2d of the same theorem implies the following property:
\par
\vspace{.2cm}
{\it
$(\star )$ Let $W_n$ be the closure of the non-compact complement of
$D_B(r(n))\cup [\R^2\rtimes _A\{ 0\} ]$ in $\R^2\rtimes _A[0,\infty )$.
If $M\subset X$ is a compact, connected branched minimal surface whose boundary lies in $W_n$, then $M\subset W_n$.
}
\par
\vspace{.2cm}
To see why property $(\star)$ above holds, we only need to observe the
following: let $D_T(r(n))$ denote the compact embedded stable minimal disk
with boundary $C_{r(n)}$ given in item 2b of Theorem~\ref{lemma2}. Note that
$D_T(r(n))$ lies in $W_n$. Hence, if $M$ is not contained in $W_n$,
there must exist a compact piece $M'$ of $M$ such that $\parc M'$ lies
in the region of $\R^2\rtimes_A [0,\8)$ bounded by $D_T(r(n))$ and
$D_B(r(n))$, but $M'$ has points outside that region. This would contradict
item 2d of Theorem~\ref{lemma2}, which proves property $(\star)$.

Since the circles $C_{r(n)}$ converge on compact subsets to the horizontal
straight line $L$ as $n\to \infty $, then a standard argument shows that after
choosing a subsequence, the $D_B(r(n))-C_{r(n)}$ converge to a minimal lamination
${\mathcal L}$
of $X-L$ (local uniform curvature estimates for the $D_B(r(n))$
hold because these minimal surfaces are stable, see~\cite{ros9,sc3}). ${\mathcal L}$ is contained in
$\Pi ^{-1}(L^+)\cap [\R^2\rtimes _A[0,\infty )]$, as $D_B(r(n))-C_{r(n)}\subset
\Pi ^{-1}(E_{r(n)})\cap [\R^2\rtimes _A[0,\infty )]$ for every $n$ and
the $E_{r(n)}$ converge to $L^+$ as $n\to \infty $. It is clear that $L$ is contained in the
closure of ${\mathcal L}$. {\bf We claim that $\overline{\mathcal L}\cap L^+=L$}: if not, then there exists
a point $p\in
(\overline{\mathcal L}\cap L^+)-L$, such that $p$ is the limit of a sequence of points
in $D_B(r(n))$. Consider a circle $C(p)$ of radius $\frac{1}{2}d(p,L)$ ($d$ denotes Euclidean distance),
and let $D_B(p)$ be the compact embedded, stable minimal disk with boundary $C(p)$ given by
item 2c of Theorem~\ref{lemma2}. A straightforward adaptation of Property $(\star )$ shows that
for $n$ sufficiently large, $D_B(r(n))$ lies in the
closure of the non-compact complement of $D_B(p)\cup [\R^2\rtimes _A\{ 0\} ]$ in $\R^2\rtimes _A[0,\infty )$.
This contradicts that $p$ is the limit of a sequence of points in $D_B(r(n))$, thereby proving our claim.

Since the $D_B(r(n))$ are all $\Pi $-graphs, then by standard arguments the leaves of
 ${\mathcal L}$  are either $\Pi $-graphs over pairwise disjoint domains in $L^+\subset \R^2\rtimes
_A\{ 0\} $, or they are contained in vertical half-planes. In particular, there exists a unique leaf of $\mathcal{L}$
whose closure contains $L$.
Since the disks $D_B(r(n))$ have uniform local ambient area bounds nearby their boundaries (this follows from
the fact that $D_B(r(n))$ is least area in a certain region of $\R^2\rtimes _A\R $, see the proof of Theorem~\ref{lemma2}),
then after choosing a subsequence the surfaces with boundary $D_B(r(n))$ converge smoothly to a surface with
boundary near $L$; clearly, the interior of this limit surface is included in the unique leaf of $\mathcal{L}$
that has $L$ in its closure. Let $D_{\infty }$ be the closure in $\R^2\rtimes _A\R $ of such a leaf. The above
arguments show that $D_{\infty }$ is a smooth $\Pi $-graph with boundary $L$ (smoothness of $D_{\infty }$
holds up to its boundary). Observe that, by construction, the angle that the tangent plane to
$D_{\infty }$ at $\vec{0}$ makes with $\R^2\rtimes _A\{ 0\}$ lies in $(0,\frac{\pi }{2}-\ve ]$.

Consider the right invariant vector field $V$ on $X$ such that
$V(\vec{0})$ is unitary 
 and tangent to~$L$. {\bf We claim that
$V$ is everywhere tangent to $D_{\infty }$.} Given $\tau \in L$,
consider the angle $\t (\tau )\geq 0$ that the $\Pi $-graph $D_\infty$ makes with the
plane $ \R^2\rtimes _A\{ 0\} $ at the point $\tau $, viewed as a function $\t \colon L\to [0,\pi/2]$.
Recall that $\t (\vec{0})\in(0,\frac{\pi }{2}-\ve ]$ and observe that $\tau \in L\mapsto \t (\tau )$ is analytic,
as both $X$ and $D_{\infty }$ are analytic.  If $\t =\t (\tau )$ is constant along $L$,
then the unique continuation property of elliptic PDEs implies that for any $\tau\in L$,  the left translation
$\tau* D_{\infty}$ of $D_{\infty }$ by $\tau $ is equal to $D_\infty$, which implies our claim.
Hence for the remainder of the proof of our claim, we will assume that there is
a $\tau\in L-\{ \vec{0}\}$ such that $\t (\vec{0})\neq \t(\tau )$.
\begin{enumerate}[(A)]
\item Suppose that $\t (\vec{0})>\t (\tau )$.
For $n\in \N$, consider a point $\sigma_n$ of
$\Int(L^+) $ at Euclidean distance
$\frac1n$ from $\tau $.  Then, the left translate $\sigma_n*D_B(r(n))$ of
$D_B(r(n))$ by $\sigma _n$ has boundary $\sigma_n*C_{r(n)}$. For $n\in \N $ fixed,
there exists an integer $j(n)>n$ such that for all $j\in \N$ with $j\geq j(n)$,
the boundary $\partial (\sigma _n*D_B(r(n)))=\sigma _n*C_{r(n)}$
lies in the interior of the disk $E_{r(j)}$. By Property $(\star )$ applied to $\sigma _n*D_B(r(n))$ and to
$M=D_B(r(j))$, we deduce that $D_B(r(j))$ lies in the closure $W_n$ of the non-compact complement of
$[\sigma _n*D_B(r(n))]\cup [\R^2\rtimes _A\{ 0\} ]$ in $\R^2\rtimes _A[0,\infty )$. Equivalently,
the graphing functions $v_n\colon \sigma _n*E_{r(n)}\to \R$, $u_{j}\colon E_{r(j)}\to \R $ such that
$\sigma _n*D_B(r(n))$ (resp. $D_B(r(j)$) is the $\Pi $-graph of $v_n$ (resp. of $u_{j}$),
satisfy $v_n\leq u_{j}$ in $\sigma _n*E_{r(n)}$, for every $j\geq j(n)$.
After taking limits as $j\to \infty $ (but letting $n$ fixed),
we conclude that $\sigma _n*D_B(r(n))$ lies below $D_\infty $. Taking limits as $n\to \infty $, we
have that $\tau*D_{\infty }$ lies below $D_\infty $. In particular, the angle that $\tau*D_{\infty}$ makes
with the plane  $\R^2\rtimes _A\{ 0\} $ at $\tau$ (which equals $\t (\vec{0})$) cannot be greater than
the angle that $D_{\infty}$ makes with $\R^2\rtimes _A\{ 0\} $ at $\tau$ (which is $\t (\tau )$), a contradiction
with our hypothesis in this case (A).

\item We next perform slight modifications in the arguments in (A) to find a contradiction
in the case that $\t (\vec{0})<\t (\tau )$. Given $n\in \N$ large, let $\sigma _n\in \mbox{Int}(E_{r(n)})$ be the point
at distance $1/n$ from $\vec{0}$ so that the segment $[\vec{0},\sigma _n]$ with end points $\vec{0}$ and $\sigma _n$ is orthogonal
to $C_{r(n)}$ at $\vec{0}$. Arguing as in case (A), there exists an integer $j(n)>n$ such that for every $j\in \N$,
$j\geq j(n)$, the left translate of $C_{r(n)}$ by $\sigma _n$ lies in the interior of $E_{r(j)}$.
By Property $(\star )$ applied to $\sigma _n*D_B(r(n))$ and to
$M=\tau * D_B(r(j))$, we deduce that $\tau * D_B(r(j))$ lies in the closure $W_n$ of the non-compact complement of
$[\sigma _n*D_B(r(n))]\cup [\R^2\rtimes _A\{ 0\} ]$ in $\R^2\rtimes _A[0,\infty )$. Equivalently,
the graphing functions $v_n\colon \sigma _n* E_{r(n)}\to \R$, $u_{j}\colon \tau * E_{r(j)}\to \R $ such that
$\sigma _n* D_B(r(n))$ (resp. $\tau * D_B(r(j)$) is the $\Pi $-graph of $v_n$ (resp. of $u_{j}$),
satisfy $v_n\leq u_{j}$ in $\sigma _n* E_{r(n)}$, for every $j\geq j(n)$.
Taking limits as $j\to \infty $ with $n$ fixed, we conclude that $\sigma _n* D_B(r(n))$ lies below $\tau * D_\infty $.
Taking limits as $n\to \infty $, we have that $D_{\infty }$ lies below $\tau * D_\infty $. In particular, the angle that
$D_{\infty}$ makes with the plane  $\R^2\rtimes _A\{ 0\} $ at $\tau$ (which equals $\t (\tau )$) cannot be greater than
the angle that $\tau* D_{\infty}$ makes with $\R^2\rtimes _A\{ 0\} $ at $\tau$ (which is $\t (\vec{0})$), that is,
$\t (\tau )\leq \t (\vec{0})$, which is contrary to our hypothesis.
\end{enumerate}

From (A) and (B) we conclude the proof of our claim
that $V$ is everywhere tangent to $D_{\infty}$.
\par
\vspace{.2cm}
Recall that $V$ is horizontal and right invariant. It follows from
equation~(\ref{eq:6}) that in $(x,y,z)$-coordinates in $X=\R^2\rtimes _A\R $,
$V$ is a linear combination of $\partial _x,\partial _y$ with constant
coefficients, and thus, its integral curves are horizontal lines, all parallel
to $L$ in the Euclidean sense. Since $V$ is everywhere tangent to $D_{\infty}$,
then the integral curves of $V$ passing through points in $D_{\infty }$ are
completely contained in $D_{\infty }$. Therefore, $D_{\infty }$ is
foliated by these horizontal lines. Let
$L^{\perp }\subset \R^2\rtimes _A\{ 0\} $ be the
straight line orthogonal to $L$ passing through
the origin. Since $D_{\infty }$ is a minimal
$\Pi $-graph and it is foliated by straight lines
parallel to $L$, then the intersection of $D_{\infty }$ with the
vertical plane $\Pi ^{-1}(L^{\perp })$ is a proper analytic arc $\g $
with $\vec{0}\in \g $, and $\g$ is a $\Pi $-graph over its projection to
$L^{\perp }$. Note that the $z$-coordinate restricted to $\g $
cannot have a local minimum value $z_0$, by the mean curvature comparison principle
applied to the minimal surface $D_{\infty }$ and to the mean curvature
one surface $\R^2\rtimes _A\{ z_0\} $. Since $\g $ is analytic, if $\g $ is not
parameterized by its $z$-coordinate, then $z|_{\g }$ must have a
first local maximum. As $\g $ lies above $\R^2\rtimes _A\{ 0\}$,
then $\g $ must be asymptotic to a horizontal line at height
$z_1\geq 0$  and
thus, $D_{\infty }$ is smoothly asymptotic to $\R^2 \rtimes
_A\{ z_1\} $. This contradicts that $D_{\infty }$ is minimal
and $\R^2\rtimes _A\{ z_1\} $ has mean curvature one. This
contradiction shows that $\g $ can be parameterized by its
$z$-coordinate; in fact, the range of values of $z$ along $\g $ is $[0,\infty)$
since $D_{\infty }$ cannot be smoothly asymptotic to any horizontal plane
$\R^2\rtimes _A\{ z_2\} $
for any $z_2>0$. In what follows we will parameterize $\g $ by the height
$z\in [0,\infty )$. 

To obtain the contradiction that will prove  Assertion~\ref{ass11.6}, we apply a flux argument to
an appropriate annular quotient of $D_{\infty }$. For any $t\in
(0,\infty)$, consider the minimal strip
\[
D_{\infty}(t)=D_{\infty}\cap (\R^2 \rtimes_A [0,t]).
\]
Fix $q\in L-\{ \vec{0}\}$ and consider
the infinite cyclic subgroup $\mathcal{I}$ of isometries of $X$ generated by
the left translation by $q$. Then $X/\mathcal{I}$ is a homogeneous
3-manifold diffeomorphic to $\esf^1\times \R^2$, the $z$-coordinate
on $X$ descends to a well-defined function on $X/\mathcal{I}$ (which we
will also call the height $z$), and every right invariant horizontal vector field
$F$ on $X$ descends to a well-defined Killing field $\wh{F}$
on $X/\mathcal{I}$. Consider the quotient minimal annulus $\Omega
(t)=D_{\infty}(t)/\mathcal{I}$ in $X/\mathcal{I}$.

First consider the case that the Milnor $D$-invariant of $X$ is positive.
We next prove that the length in $X/\mathcal{I}$ of the boundary curve $c_t$ of
$\Omega (t)$ at height $t$ converges to zero exponentially quickly
as $t\to \infty$. To see this, without loss of generality we may assume that $L$
points in the $x$-direction (after a rotation in the
$(x,y)$-plane, which does not change the ambient left invariant metric but does
change the matrix $A$). Then, $q=(x(q),0,0)$ with $x(q)\in \R -\{
0\} $ and equation~(\ref{eq:13}) gives that
\[
\mbox{length}(c_t)=\int _0^{x(q)}\| \partial _x\| \, dx=
\int_0^{x(q)}\sqrt{a_{11}(-t)^2+a_{21}(-t)^2}\, dx= \| \partial
_x\|(0,0,t)\, |x(q)|,
\]
where $(a_{ij}(z))_{i,j}$ denotes the matrix $e^{zA}\in
\mathcal{M}_2(\R )$. Now we deduce that length$(c_t)$ converges to zero
exponentially quickly as $t\to \in \infty $ from item~2a of
Proposition~\ref{propos2app} in the Appendix, as the Milnor $D$-invariant is positive.
The same item~2a of Proposition~\ref{propos2app} ensures that
all horizontal right invariant vector fields in $X$ have
lengths decaying exponentially as $z\to +\infty $.  Pick a right
invariant horizontal vector field $F$ in $X$ such that $F,V$ are linearly independent.
Let $\wh{F}$ be the quotient Killing field  of $F$ in $X/\mathcal{I}$.
Consider for each $t\geq 0$ the flux
of $\wh{F}$ across $c_t$, defined as
\[
\mbox{Flux}(\wh{F},c _t)=\int _0^{x(q)}\langle \wh{F},\eta \rangle dx,
\]
where $\eta $ is the unit vector field which is tangent to $D_{\infty }/\mathcal{I}$,
orthogonal to $c_t$ with $\langle \eta ,E_3\rangle \geq 0$.
   Since $\wh{F}$ has
constant length along $c_t$ with this constant being bounded as a function of $t>0$,
and the length of $c_t$ converge to zero as $t\to \infty $,
then $\mbox{Flux}(\wh{F},c _t)$ also converges to zero as $t\to
\infty$.  As $\mbox{Flux}(\wh{F},c_0)$ is non-zero and
$\mbox{Flux}(\wh{F},c_t)$ is independent of $t$ (because  the
divergence of the tangential component of $\wh{F}$ along
$\Omega (t)$ is zero), then we obtain a contradiction.
This contradiction completes the proof of Assertion~\ref{ass11.6}
in the case $D>0$.

If $D\leq 0$, then item~2b of Proposition~\ref{propos2app}
gives that $A$ is diagonalizable with one positive eigenvalue
$\l $ and another non-positive eigenvalue $\mu $. In this case, there
exists a
horizontal right invariant vector field $F$ on $X$
with $\| F(\vec{0})\| =1$, such that the norm of
$F$ decreases exponentially quickly as $z\to +\infty $ (namely, $F$ is
determined by $F(\vec{0})$ being the unitary eigenvector of the matrix $A$
associated to $\l $, since $\| F\| (z)=e^{-\l z}$ by item~2b
of Proposition~\ref{propos2app}). Let $\wh{F}$ be the quotient
Killing field of $F$ on $X/\mathcal{I}$.
Suppose for the moment that $F$ is not collinear with $V$.
As before, we may assume that $L$ points in the $x$-direction and
$q=(x(q),0,0)$. Then, the flux of $\wh{F}$ along $c_t$ satisfies
\[
|\mbox{Flux}(\wh{F},c_t)|\leq \int _0^{x(q)}\| F\| (x_0,y_0,t)\, dx.
\]
But the line element $dx$ grows at most exponentially as $z\to
+\infty $, and in fact at most as the function $e^{-\mu z}$.
Since $2=\mbox{trace}(A)=\l +\mu $,
then $\| F\| \, dx$ decays exponentially as $z\to +\infty $ and
thus, we arrive to a contradiction as in the former case $D>0$.

The last case we must consider is $D\leq 0$ and $F$ is proportional to $V$. In this case,
we still normalize $L$ to point in the direction of the $x$-axis,
and replace $F$ in the above computations by $\partial _y$. Then
$dx$ decays like $e^{-\l z}$ (with the same notation as before)
while $\| F\| $ increases at most like $e^{-\mu z}$  as $z\to +\infty $,
and the conclusion is the same. Now Assertion~\ref{ass11.6} is proved.
\end{proof}

We now prove item~1 in the statement of Theorem~\ref{thm:Plateau}.
Let $\G $ be a simple closed convex curve contained in $\R^2\rtimes _A\{ 0\}
$, and let $M\subset X$ be a compact branched minimal surface with
$\partial M=\G $. By Lemma~\ref{lemma1}, $\Int(M)\subset \Pi ^{-1}(\Int (E))$,
where $E$ is the convex disk in $\R^2\rtimes_A \{0\}$ bounded by $\G $.
If $\Int(M)$ is not contained in $\R^2\rtimes _A(0,\infty )$, then there exists a point
$p\in \Int (M)$ with smallest non-positive $z$-coordinate $z_0$.  An
application of the mean curvature comparison principle gives a
contradiction to the fact that the mean curvature of the plane $\R^2
\rtimes _A\{ z_0\}$ is $1$ and $M$ lies on the mean convex side of
this plane at the point $p$. Therefore, conditions $(a),(b)$ in item 1 of
Theorem~\ref{thm:Plateau} hold.

Next we prove that condition $(c)$ in item 1 holds.
Arguing by contradiction, suppose
that there exists an $\ve>0$,
a sequence $M(n)$ of compact branched minimal surfaces with
$C^2$ simple closed convex boundary curves $\G(n)\subset
\R^2\rtimes_A\{0\}$, and points $p_n\in \G (n)$ such that
\begin{enumerate}[(C1)]
\item The geodesic curvature of $\G (n)$ is uniformly going to zero as $n\to \infty $.
\item $ \langle \eta_n(p_n),  \partial_z(p_n)\rangle
 \geq -1+\ve$ for all $n\in \N$,
 where $\eta _n$ is the exterior unit conormal vector to $M(n)$ along $\G (n)$
 (observe that in order to make sense of $\eta _n$ we are using that $M(n)$ is an immersion
 near $\G(n)$ by item~2 of Lemma~\ref{lemma1}).
 \end{enumerate}

Let $R=R(\ve )$ be the positive number produced by Assertion~\ref{ass11.6} applied to
the value $\ve >0$ that appears in Condition~(C2) above. By Condition~(C1), for $n$ large
enough we can assume that the geodesic curvature of $\G(n) $ is less than $1/R$.
This property allows us to find a round disk
$\wh{E}_n\subset \R^2\rtimes_A \{0\}$ of radius $R$ that is contained in the disk
$E(n)$ bounded
by $\G(n)$ and such that $\wh{E}_n\cap E(n)= \{p_n\}$. Let $\wh{\G}(n)=\partial \wh{E}_n$
 and let $D_B(n)$ be the `lowest' minimal disk
bounded by $\wh{\G}(n)$ given by item~2c of Theorem~\ref{lemma2}.
Since $D_B(n)$ lies below the branched minimal
surface $M(n)$ (this follows from Property~$(\star)$ in the proof of Assertion~\ref{ass11.6}
applied to $D_B(n)$ and $M(n)$)
and $D_B(n)\cap M(n)=\{p_n\}$,
then the angle that the tangent plane
$T_{p_n}M(n)$ makes with $\R^2\rtimes_A\{0\}$ is
greater than the angle $\varphi _n$ that $T_{p_n}D_B(n)$ makes with
$\R^2\rtimes_A\{0\}$.  Since
Condition~(C1) and Assertion~\ref{ass11.6} allow us to take $\varphi _n$ arbitrarily
close to $\pi /2$ for $n$ sufficiently large, then we conclude that
the angle that $T_{p_n}M(n)$ makes with $\R^2\rtimes_A\{0\}$
converges to $\pi /2$ as $n\to \infty$.  This  contradiction completes the proof of item~1c
of the theorem.

We next prove item~2 of the theorem. Suppose that
$\G(n)\subset \R^2 \rtimes_A \{0\}$
is a sequence of $C^2$ simple closed convex
curves  with $\vec{0}\in \G(n) $, having geodesic curvatures uniformly
approaching $0$ as $n \to \infty$ and converging on compact subsets
to a straight line $L$ that contains $\vec{0}$. Let $M(n)$ be a sequence of
compact branched minimal disks (or compact stable minimal surfaces)
with $\partial M(n)=\G(n)$. Suppose for the moment that the $M(n)$
are disks; we will discuss later the changes necessary to
prove the case that the $M(n)$ are stable.
By item~1 of Theorem~\ref{lemma2},
the disks $M(n)$ are unbranched and $\Pi $-graphs over the
compact convex disks $E(n)$ bounded by $\G (n)$ in $\R^2 \rtimes_A
\{0\}$. We claim that the $M(n)$ have uniformly bounded second
fundamental forms up to their boundaries; to see this, suppose this property
fails. Left translate the $M(n)$ so that the norm of
the second fundamental form is largest
at the origin, and rescale the $(x,y,z)$-coordinates by this maximum norm of the
second fundamental form of the $M(n)$, obtaining a new sequence
of rescaled minimal $\Pi $-graphs with uniformly bounded second fundamental
form. After extracting a subsequence,
these rescaled $\Pi $-graphs converge to a non-flat minimal surface
$M_{\infty }$ in $\R^3$ possibly with boundary
(if $\partial M_{\infty }$ is non-empty then $\partial M_{\infty }$
is a horizontal straight line and $M_{\infty }$ lies entirely
above the horizontal plane that contains $\partial M_{\infty }$).
Note that the Gaussian image of $M_{\infty }$ is contained in the closed
upper hemisphere,
which is clearly impossible if some component of $M_{\infty }$ has empty
boundary (note that this component would be complete). This implies that $M_{\infty }$ is connected and has nonempty
boundary. It follows that $M_{\infty }$ is a graphical stable minimal surface in the
closed upper half-space of $\rth$ bounded by the horizontal plane that contains
$\partial M_{\infty}$, which
can also be easily ruled out, since $M_{\infty }$ together with its image under the
180$^o$-rotation around $\partial M_{\infty }$ is a complete, non-flat minimal graph.
Therefore, the $M(n)$ have uniformly bounded second
fundamental forms up to their boundaries.

It follows that a subsequence of the $M(n)$ (denoted in the same way)
converges as $n\to \infty $ on compact subsets of $X$ to a minimal lamination
$\mathcal{L}$ of $X-L$, and $\mathcal{L}$ contains a leaf $M_{\infty }$
with boundary the straight line $L$. Since the geodesic curvatures of the curves
$\G(n)$ converge uniformly to $0$ as $n\to \infty $, then item~1 of this theorem
implies that $\langle \eta _n,\partial _z\rangle $ is arbitrarily close to
$-1$ for $n$ large enough (here $\eta _n$ is the exterior conormal vector field
to $M(n)$ along $\G(n)$). It follows that the limit surface
$M_{\infty }$ is tangent to the closed vertical
halfplane $\Pi^{-1}(L)\cap [\R^2\rtimes _A[0,\infty )]$ along $L$.
Since the $M(n)$ all lie at one side of
$\Pi^{-1}(L)\cap [\R^2\rtimes _A[0,\infty )]$,
then $M_{\infty }$ also lies at one side of
$\Pi^{-1}(L)\cap [\R^2\rtimes _A[0,\infty )]$
and thus, the boundary maximum principle implies that
 $M_{\infty }=\Pi^{-1}(L)\cap [\R^2\rtimes _A[0,\infty )]$.
We now prove that $\mathcal{L}$ contains no other leaves
different from $M_{\infty }$. Arguing by contradiction, any other leaf
component $\Sigma $  of $\mathcal{L}$ must be a complete positive $\Pi $-graph (without boundary) over
its projection to $\R^2 \rtimes_A \{0\}$, and $\Sigma $ has bounded
second fundamental form by arguments in the previous paragraph. But the existence of such a graphical leaf
$\Sigma$ in $\R^2\rtimes _A(0,\infty )$ is easily seen to be
impossible by considering its behavior on a sequence of points
$p_k=(x_k,y_k,z_k)\in \Sigma $ where $\lim_{k \to \infty}z_k$ is the infimum
$z_0\geq 0$ of the $z$-coordinate function of $\Sigma$ (recall that the minimal
surface $\Sigma $ cannot be asymptotic to the mean curvature one
surface $\R^2\rtimes _A\{ z_0\}$). This contradiction proves that
$\mathcal{L}=\{ M_{\infty }\} $, and thus, a subsequence of the $M(n)$
converges to the desired halfplane. Since every subsequence of the
$M(n)$ has a convergent subsequence which equals this limit, then
the entire sequence $M(n)$ converges to
$\Pi^{-1}(L)\cap [\R^2\rtimes _A[0,\infty )]$.
This completes the proof of item~2 of the theorem
in the case that the $M(n)$ are disks.

If the $M(n)$ are compact stable surfaces (not disks)
then the curvature estimates by Schoen~\cite{sc3} and Ros~\cite{ros9} give that
the $M(n)$ have uniformly bounded second
fundamental forms away from their boundaries.
As previously, for each $n\in \N$ let $E(n)$ be the convex compact disk
bounded by $\G (n)$ in $\R^2 \rtimes_A\{0\}$.
Note that by barrier arguments as in the proof of items 2b, 2c in
Theorem~\ref{lemma2}, for each $n\in \N$  there exists
a least-area disk $D(n)$ with
boundary $\G (n)$  in the closure of
the bounded region of  $[\R^2\rtimes _A [0,\infty)] -M(n)$ that contains $E(n)$.
Furthermore, $D(n)\subset \Pi ^{-1}(E(n))
\cap [\R^2\rtimes _A[0,\infty )]$
 is a $\Pi $-graph over $E(n)$.
Also, $M(n)$ lies ``above'' the $\Pi $-graph $D(n)$. As
the previously considered case of disks ensures that the $D(n)$ converge to
$\Pi^{-1}(L)\cap [\R^2\rtimes _A[0,\infty )]$ as $n\to \infty $,
then the $M(n)$ converge (as sets) to
$\Pi^{-1}(L)\cap [\R^2\rtimes _A[0,\infty )]$.
We now check that the last convergence is of class $C^2$,
by showing that the $M(n)$ have uniformly
bounded second fundamental form up to their
boundaries (this would finish the proof of
item~2 of Theorem~\ref{thm:Plateau}
in this case). If this is not the case,
then the rescaling-by-curvature argument above produces a
limit of a subsequence of the $M(n)$ which is a
non-flat, stable minimal surface $M_{\infty }$ in $\R^3$,
such that either has no boundary, or $M_{\infty }$ has
non-empty boundary given by a horizontal
line and $M_{\infty }$ is contained in a quarter
of space $Q\subset \R^3$ with $\partial M_{\infty }$
being the set of non-smooth points of $\partial Q$.
If $\partial M_{\infty }= \mbox{\O }$, then $M_{\infty }$
is complete, which contradicts that $M_{\infty }$ is non-flat and stable.
Therefore, $\partial M_{\infty }\neq \mbox{\O }$. In this case,
the rescaled least-area disks $D(n)$ are all below
the related rescaled $M(n)$. Since these rescaled images of $D(n)$ converge
as $n\to \infty $ to a vertical half-plane in $Q$, then $M_{\infty }$ must be equal
to this vertical limit half-plane, which contradict the non-flatness of $M_{\infty }$.
Now the theorem is proved.
\end{proof}

\begin{corollary}
\label{corol5.4}
Let $X=\R^2\rtimes _A\R $ be a metric semidirect product, where $A\in \mathcal{M}_2(\R )$ satisfies equation (\ref{Axieta}).
Then, there exists a straight line $L\subset \R^2\rtimes _A\{ 0\} $ with $\vec{0}\in L$ such that the following property holds.
\begin{enumerate}[(Q)]
\item Let $p,q\in L$ be different points with $\vec{0}\in (p,q)$ (here $(p,q)\subset L$ is
the open segment with extrema $p$, $q$), and
let $C_p,C_q\subset \R^2\times _A\{ 0 \}$ be
pairwise disjoint Euclidean circles centered at points in $L-[p,q]$,
with $p\in C_p$, $q\in C_q$. If the Euclidean
radii of $C_p,C_q$ are sufficiently large, then
there exists an embedded least-area annulus
$\Sigma \subset \R^2\rtimes _A[0,\infty )$
with boundary $\partial \Sigma =C_p\cup C_q$ (see Figure~\ref{fig2}).
\end{enumerate}
Furthermore:
\begin{enumerate}[(1)]
\item If the Milnor $D$-invariant of $X$ is $D> 0$, then property (Q) holds
for every line $L\subset \R^2\rtimes _A\{ 0\} $ with $\vec{0}\in L$.
\item If $D\leq0$, then property (Q) holds for the line $L\subset \R^2\rtimes _A\{ 0\} $
with $\vec{0}\in L$ in the direction of the eigenvector of $A$ associated to a
positive eigenvalue.
\end{enumerate}
\end{corollary}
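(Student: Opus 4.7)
The plan is to produce the required annulus as a least area Plateau solution for the boundary $C_p\cup C_q$ inside a suitable bounded mean convex region of $\R^2\rtimes_A[0,\infty)$, and to rule out degeneration into a pair of disks by comparing areas with the aid of Theorem~\ref{thm:Plateau}.

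First I would set up the barriers. Under the standing normalization $\mathrm{trace}(A)=2$, the horizontal plane $\R^2\rtimes_A\{0\}$ has mean curvature $1$ with inward (upward) pointing mean curvature vector, so $\R^2\rtimes_A[0,\infty)$ is locally mean convex along its boundary. In the case $D>0$, Proposition~\ref{Plateau2} provides a family of mean convex solid cylinders of finite vertical extent, built over appropriately chosen ellipses in $\R^2\rtimes_A\{0\}$ whose major axis can be taken to lie along any prescribed line $L$ through $\vec{0}$; one then encloses $C_p\cup C_q$ in the bottom face of such an ellipse-cylinder. In the case $D\leq 0$, by item~2b of Proposition~\ref{propos2app} the matrix $A$ is diagonalizable with a positive eigenvalue $\lambda$ and a non-positive eigenvalue $\mu$, and the horizontal right-invariant vector field $F$ associated to the eigenvector of $\lambda$ has norm decaying like $e^{-\lambda z}$ as $z\to+\infty$; the analogous mean convex cylinder construction goes through only when $L$ is taken to be the eigenline of $\lambda$, which is exactly the direction constraint in statement~(2). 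In either case, intersecting the cylinder with $\R^2\rtimes_A[0,\infty)$ yields a bounded mean convex domain $\Omega$ containing $C_p\cup C_q$ in its boundary.

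Second, I would invoke the Meeks--Yau theory of least area surfaces in mean convex $3$-manifolds (Theorem~1 of~\cite{my2}) to obtain an embedded least area annulus $\Sigma\subset\Omega$ with $\partial\Sigma=C_p\cup C_q$, provided the infimum of areas among embedded annuli bounding $C_p\cup C_q$ is strictly smaller than the sum $|D_B(C_p)|+|D_B(C_q)|$ of the areas of the ``bottom'' least area disks from item~2c of Theorem~\ref{lemma2}. To verify this strict inequality, I would construct an explicit annular competitor by combining a finite rectangular piece of the minimal vertical halfplane $\Pi^{-1}(L)\cap[\R^2\rtimes_A[0,\infty)]$ spanning from $C_p$ to $C_q$ with narrow strips cut from $D_B(C_p)$ and $D_B(C_q)$ near $p$ and $q$, smoothing to obtain an embedded piecewise smooth annulus whose area depends only on $p,q$ and the chosen height cutoff, hence is bounded independently of the radii of $C_p,C_q$.

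Third, I would use Theorem~\ref{thm:Plateau} item~2 applied to the left-translates $p^{-1}*C_p$ and $q^{-1}*C_q$ (whose geodesic curvatures tend to zero uniformly and which converge on compact subsets to lines through $\vec{0}$ perpendicular to $L$) to conclude that $p^{-1}*D_B(C_p)$ converges $C^2$ on compact subsets to the infinite vertical halfplane $\Pi^{-1}(L^{\perp})\cap[\R^2\rtimes_A[0,\infty)]$, and similarly for $q^{-1}*D_B(C_q)$. For each fixed $R>0$ this forces $|D_B(C_p)\cap B_R(p)|$ to be close to the corresponding halfplane area, and a diagonal argument yields $|D_B(C_p)|\to\infty$ and $|D_B(C_q)|\to\infty$ as the radii diverge. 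Hence for radii sufficiently large, the sum $|D_B(C_p)|+|D_B(C_q)|$ exceeds the uniform upper bound on the annular competitor constructed in Step 2, giving the required strict area inequality.

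The main obstacle I expect is carrying out the mean convex enclosing region construction in the $D\leq 0$ case, where Proposition~\ref{Plateau2} does not directly apply and one must replace its ellipse-cylinders by a barrier tailored to the positive eigendirection via the asymptotic estimates of Proposition~\ref{propos2app}; a secondary technical point is to confirm that the piecewise-smooth annular competitor built from the vertical halfplane strip and the disk flaps truly has area uniformly bounded in the radii, and that the Meeks--Yau existence/embeddedness machinery applies in this exact setup when $C_p\cup C_q$ sits on the boundary of $\Omega$ (possibly after a small mean-convex perturbation of $\Omega$).
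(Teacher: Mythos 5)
Your overall skeleton (Douglas criterion plus Morrey/Meeks--Yau existence and the Dehn-lemma embeddedness) matches the paper's, but the step where you verify the Douglas condition contains a genuine gap. You propose an annular competitor built from a rectangular piece of the vertical halfplane $\Pi^{-1}(L)$ together with ``narrow strips'' of $D_B(C_p)$ and $D_B(C_q)$, and claim its area is bounded independently of the radii of $C_p,C_q$. No such competitor exists: any surface with boundary exactly $C_p\cup C_q$ must attach along the \emph{entire} circles, and since (as you yourself note via Theorem~\ref{thm:Plateau}) the least-area disks bounding these circles converge to vertical halfplanes and have areas tending to infinity, the infimum of areas of annuli bounding $C_p\cup C_q$ also tends to infinity (already visible in the model case $X=\Hip^3$). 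So you cannot beat $|D_B(C_p)|+|D_B(C_q)|$ by a uniformly bounded amount; you must beat it by a \emph{relative} amount. The paper's verification is the standard bridge construction: by Assertion~\ref{ass11.6} the disks $D_p,D_q$ converge to the vertical halfplanes over $L_p,L_q$, and because the vertical lines $l_p\subset L_p$, $l_q\subset L_q$ are \emph{asymptotic to each other} as $z\to+\infty$ (exponential decay of horizontal lengths, item~2 of Proposition~\ref{propos2app}), one can find points of $D_p$ and $D_q$ at arbitrarily small distance; removing two unit geodesic subdisks there and gluing in the least-area annulus between their boundaries produces a competitor of area strictly less than $|D_p|+|D_q|$. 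That is the inequality the Douglas criterion needs.

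This omission also causes you to misattribute the role of the hypothesis on $L$ when $D\le 0$. Proposition~\ref{Plateau2} and its mean convex ellipse-cylinders are not used in this corollary at all (and, incidentally, the $F_3$-invariant cylinder $\Omega(r)$ intersected with $\R^2\rtimes_A[0,\infty)$ is not bounded, since the flow $\phi_s$ translates vertically). The eigendirection condition in statement~(2) is needed precisely so that the displacement $q-p$ lies in a direction contracted as $z\to+\infty$: only then are $l_p$ and $l_q$ asymptotic, only then do $D_p$ and $D_q$ come arbitrarily close, and only then does the bridge construction verify the Douglas condition. When $D>0$ all horizontal right-invariant fields decay (item~2a of Proposition~\ref{propos2app}), which is why every line $L$ works in that case. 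Your steps on existence and embeddedness of the least-area annulus (Morrey~\cite{mor1} plus the Geometric Dehn's Lemma of~\cite{my1}, with the maximum principle forcing $\Int(\Sigma)\subset\R^2\rtimes_A[0,\infty)$) are essentially the paper's, but without the correct area comparison the proof does not close.
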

\begin{figure}
\begin{center}
\includegraphics[height=5.3cm]{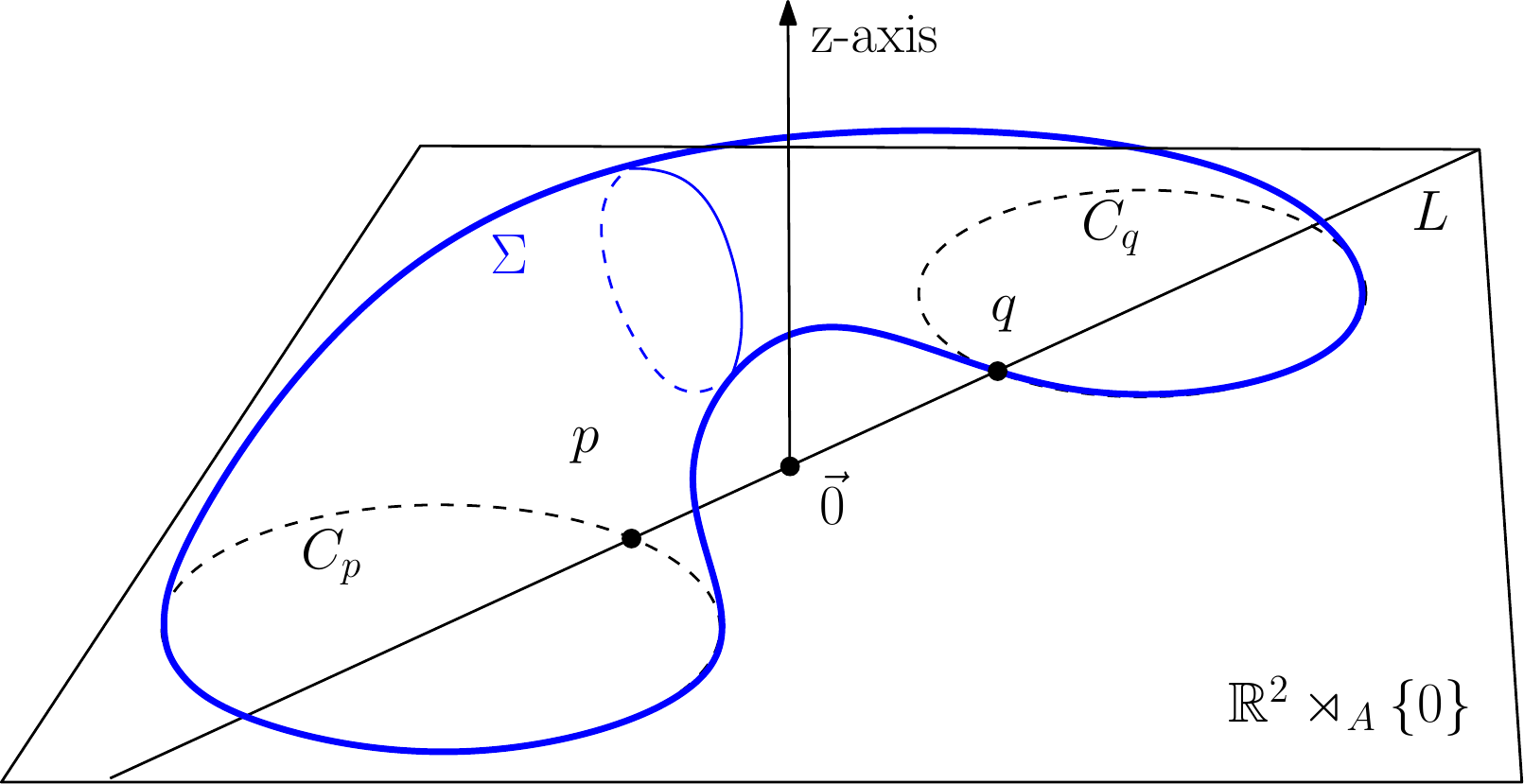}
\caption{The minimal annulus $\Sigma $ that appears in Corollary~\ref{corol5.4}, for
a pair of pairwise disjoint, large enough circles $C_p,C_q\subset \R^2\rtimes _A\{ 0\} $.}
\label{fig2}
\end{center}
\end{figure}
\begin{proof}
Suppose first that $D>0$, and let $L\subset \R^2\rtimes _A\{ 0\} $ be any line with $\vec{0}\in L$.
By item 2a in Proposition~\ref{propos2app},
the vertical lines $l_p=\{ (p,z)\ | \ z\in \R \} $ and $l_q=\{ (q,z)\ | \ z\in \R \} $
are both asymptotic to the $z$-axis as $z\to \infty $. Now consider
pairwise disjoint Euclidean circles $C_p,C_q\subset \R^2\times _A\{ 0 \}$ centered
at points in $L-[p,q]$, with $p\in C_p$, $q\in C_q$. Let $D_p,
D_q$ be compact, embedded, least-area disks with
boundaries $\partial D_p=C_p$, $\partial D_q=C_q$, which exist by Theorem~\ref{lemma2}.
By Assertion~\ref{ass11.6}, if the Euclidean radii of $C_p$, $C_q$ is sufficiently large,
then $D_p,D_q$ are arbitrarily close to the vertical half-planes
$\Pi ^{-1}(L_p)\cap [\R^2\rtimes _A[0,\infty )]$,
$\Pi ^{-1}(L_q)\cap [\R^2\rtimes _A[0,\infty )]$, where $L_p,L_q\subset \R^2\rtimes _A\{ 0\} $
are the lines orthogonal to $L$ that pass through $p,q$ respectively.
Since $l_p\subset L_p$ and $l_q\subset L_q$ are asymptotic to the $z$-axis as $z\to
\infty$, then $l_p$ and $l_q$ are asymptotic to each other, and thus the distance between the disjoint area minimizing disks $D_p$
and $D_q$ is arbitrarily small if the Euclidean radii of $C_p$, $C_q$ are sufficiently large. Therefore, after replacing
a pair of intrinsic geodesic disks $D'_p\subset D_p, D'_q\subset D_q$  of radius 1 centered at
sufficiently close points of $D_p,D_q$
by an annulus of least area with boundary $\partial D'_p\cup \partial D'_q$, we obtain a piecewise smooth annulus with area less
that the sum of the areas of the least-area disks $D'_p,D'_q$.  By the Douglas
criterion (the area of some annulus bounding $C_p\cup C_q$ is less than the infimum of the areas of any two disks
bounding $C_p\cup C_q$), there exists by Morrey~\cite{mor1} an annulus $\Sigma $ of least area in $X$
with boundary $C_p\cup C_q$.  Note that $\Int(\Sigma )\subset \R^2\rtimes[0,\infty)$ by the
maximum principle applied to $\Sigma $ and to planes $\R^2\rtimes _A\{ z\} $ with $z<0$,
then by the Geometric Dehn's Lemma for
Planar Domains given in Theorem~5 in~\cite{my1}, $\Sigma$ is a smooth embedded annulus (actually, Theorem~5
in~\cite{my1} is stated for three-manifolds with convex boundary but the convex boundary is only used to obtain 
the existence of a least-area immersed annulus, which we already have in this case).

Suppose $D\leq0$.
As the eigenvalues of $A$ are the roots of the polynomial $\l ^2-2\l +D=0$, then $\l =1\pm \sqrt{1-D}$. Hence,
exactly one of these eigenvalues $\lambda_+$ is greater than or equal to $2$, and the other one is non-positive.
After an orthogonal change of basis (that does not change the metric Lie group
structure of $X$), the matrix $A$ transforms to $A_1=OAO^{-1}$ for some
orthogonal matrix $O$,
where $A_1$ has entries $a_{11}=\lambda_+$ and $a_{21}=0$, and having an
 associated eigenvector $(1,0)$.
Consider the line $L=\{(t,0,0)\}\subset \R^2\rtimes _{A_1}\{ 0\} $.
In this case, equation~\eqref{eq:6*} shows that $E_1=e^{\lambda_+z}\partial_x$, and
so, $\| \partial _x\|\ $ is exponentially decaying as $z\to \infty $.
Hence, for any pair of different points $p,q\in L$,
the vertical lines $l_p=\{ (p,z)\ | \ z\in \R \} $ and $l_q=\{ (q,z)\ | \ z\in \R \} $
are both asymptotic to the $z$-axis as $z\to \infty $. Now consider
pairwise disjoint Euclidean circles $C_p,C_q\subset \R^2\times _{A_1}\{ 0 \}$ centered
at points in $L-[p,q]$, with $p\in C_p$, $q\in C_q$. Let $D_p,
D_q$ be compact, embedded, least-area disks with boundaries $\partial D_p=C_p$, $\partial D_q=C_q$,
which exist by Theorem~\ref{lemma2}.
Arguing as in the previous paragraph, if the Euclidean radii of $C_p$, $C_q$ are sufficiently large,
there exists an embedded least-area annulus $\Sigma \subset \R^2\rtimes _A[0,\infty )$
with boundary $\partial \Sigma =C_p\cup C_q$, and the proof is complete.
\end{proof}

\section{Radius estimates for cylindrically bounded stable minimal surfaces.}
\label{sec:estim}
In this section we obtain radius estimates for compact stable minimal surfaces in
semidirect products, using the results from Section~\ref{sec:4}.
Given $r>0$ and a vertical geodesic $\G$ in a metric semidirect product $X=\R^2\rtimes _A\R$,
we will denote by  $\mathcal{W}(\G ,r)\subset X$ the closed solid metric cylinder
of radius $r$ centered along $\G$.

\begin{proposition}
\label{prop5.1}
Let $X=\R^2\rtimes_A \R$ be a metric semidirect
product, where $A$ is either as in equation (\ref{Axieta}) with Milnor $D$-invariant less than
1 or where {\em trace}$(A)=0$ \,(this is the case where $X$ is unimodular).
For every vertical geodesic $\G \subset X$ and $r>0$, there exists a $j\in \N$
such that every compact immersed minimal surface $M$ in $X$ with
$\partial M\subset \mathcal{W}(\G ,r)$ satisfies $M\subset \mathcal{W}(\G ,jr)$.
\end{proposition}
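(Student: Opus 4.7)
After a left translation, which is an isometry of $X=\R^2\rtimes_A\R$, I may assume $\G=\{(0,0,z)\mid z\in\R\}$. My plan is to use minimal-surface barriers, applied via the maximum principle, to trap $M$ inside a metric neighborhood $\cW(\G,jr)$ of $\G$. The barriers come from three families.

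The first family consists of the vertical minimal planes $\Pi^{-1}(L)$, $L\subset \R^2\rtimes_A\{0\}$ an affine line, given by Remark~\ref{rem3.2}. Foliating $X$ by parallel translates of such planes and applying the maximum principle yields the inclusion of $\Pi(M)$ in the Euclidean convex hull of $\Pi(\partial M)$. The second family consists of horizontal barriers. In the unimodular case (trace$(A)=0$), each plane $\R^2\rtimes_A\{z_0\}$ is minimal, so the usual foliation argument gives $\min z|_{\partial M}\le z|_M\le \max z|_{\partial M}$. In the non-unimodular case with $D<1$, those planes are mean-curvature-one, and the mean-curvature comparison principle produces only the lower bound $z|_M\ge \min z|_{\partial M}$. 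The third family, needed in the non-unimodular case with $D<1$, consists of the least-area embedded minimal annuli from Corollary~\ref{corol5.4}: for each sufficiently large pair of Euclidean circles centered on the distinguished line $L$ of that corollary, one obtains an annulus in $\R^2\rtimes_A[0,\infty)$ which, by Theorem~\ref{thm:Plateau}, is $C^2$-close to a vertical half-plane over $L$. Left-translating such an annulus by $(0,0,z_0)\in\G$ (an isometry of $X$) produces an upper cap at height approximately $z_0$ whose boundary circles can be arranged to lie outside $\cW(\G,r)$.

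Once the three families of barriers are placed so that their boundaries are disjoint from $\cW(\G,r)$, the maximum principle traps $M$ inside a compact region $\Omega\subset X$. Since $X\equiv\R^3$ carries a smooth Riemannian metric and $\Omega$ is bounded in Euclidean terms, $\Omega$ has bounded metric diameter depending only on $\G$ and $r$, and an appropriate $j=j(\G,r)\in\N$ exists with $\Omega\subset \cW(\G,jr)$.

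The main obstacle I anticipate is the non-unimodular case $D<1$: the upper height barrier cannot be produced by a minimal foliation, and must instead be built from the annular minimal surfaces of Corollary~\ref{corol5.4}. One must choose the radii of the boundary circles together with the heights $z_0$ of the left translates so that (i) the annular boundaries all remain outside $\cW(\G,r)$, hence the maximum principle applies, and (ii) the enclosed region $\Omega$ has metric distance to $\G$ bounded uniformly in $M$. The hypothesis $D<1$ enters essentially through Corollary~\ref{corol5.4}, which does not apply when $D\ge 1$; in particular $X=\H^3$ falls outside the scope of Proposition~\ref{prop5.1} and would require a separate treatment.
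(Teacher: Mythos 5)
Your proposal has a genuine gap, and it stems from a misreading of what must be controlled. The solid metric cylinder $\cW(\G,jr)$ is \emph{vertically unbounded}: since $\partial M$ is only assumed to lie in $\cW(\G ,r)$, it may sit at arbitrarily large heights, so $M$ cannot be trapped in a compact region $\Omega$ whose diameter depends only on $r$ and $\G$, and your final step (``$\Omega$ is bounded in Euclidean terms, hence has bounded metric diameter, hence lies in some $\cW(\G,jr)$'') collapses. The upper ``cap'' barriers built from the annuli of Corollary~\ref{corol5.4} are therefore both unobtainable uniformly in $M$ and unnecessary; what has to be bounded is the \emph{lateral} metric distance from $M$ to $\G$, uniformly in height. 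Your lateral barrier is also too weak for this: the foliation by coordinate vertical planes only yields $\Pi(M)\subset\mathrm{conv}(\Pi(\partial M))$, but $\Pi(\cW(\G ,r))$ is in general not bounded --- e.g.\ when $0<D<1$ both eigenvalues of $A$ are positive, all horizontal right invariant fields decay as $z\to+\infty$ (Proposition~\ref{propos2app}), and $\cW(\G ,r)$ contains points whose Euclidean coordinates $(x,y)$ are arbitrarily large. So the convex hull of $\Pi(\partial M)$ gives no bound in terms of $r$.

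The missing idea, which is the heart of the paper's proof, is to replace Euclidean coordinate slabs by \emph{metric} slabs $U_i(R)$ (regular neighborhoods of radius $R$) around the vertical planes $P_1,P_2$ that are Lie subgroups of $X$; the hypothesis $D<1$ (or $\mathrm{trace}(A)=0$) is exactly what guarantees, via Theorem~3.6 of~\cite{mpe11}, that $A$ is real-diagonalizable so such subgroup planes exist --- not, as you suggest, that Corollary~\ref{corol5.4} applies (that corollary holds for every non-unimodular $X$ normalized as in \eqref{Axieta}, including $\H^3$). One then shows the equidistant boundary surfaces $P_i^{\pm}(R)$ are left cosets of two-dimensional subgroups with mean curvature vector pointing toward $P_i$, so each $U_i(R)$ is mean convex and the comparison principle gives $M\subset U_1(r)\cap U_2(r)$; a separate argument with length-decreasing projections shows $U_1(r)\cap U_2(r)\subset\cW(\G,2r)$. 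Finally, the cases $X$ isomorphic to $\EE$ (where one exploits periodicity under left translation by $(0,0,2\pi)$) and to ${\rm Nil}_3$ (where only one subgroup plane exists and one must sweep with minimal annuli built from the Douglas criterion inside the slab $U_1(r)$) need the separate treatments given in the paper; your proposal does not engage with either.
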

\begin{proof}
Without loss of generality, we will henceforth assume that  $\G$ is the
$z$-axis. Recall that if $X$ is unimodular, then it is isomorphic to $\R^3$,
$\EE$, ${\rm Nil}_3$ or ${\rm Sol}_3$.

First suppose that $X$ is not isomorphic to $\EE $ or  Nil$_3$.
By Theorem~3.6 in~\cite{mpe11} (see also Examples 3.2--3.5 therein),
there are two distinct vertical planes $P_1,P_2$ (in $(x,y,z)$-coordinates, in fact,
$P_1$ can be taken as the $(x,z)$-plane and $P_2$ as the $(y,z)$-plane)
that are Lie subgroups of $X$.
For $i=1,2$, let $U_i(R)$  be the closed regular
neighborhood of $P_i$ of radius $R>0$. {\bf We claim that the boundary surfaces
$\partial U_i(R)=P_i^+(R)\cup P_i^-(R)$ of $U_i(R)$
both have non-negative
mean curvature  in $X$ with respect to the inward pointing normal
to $U_i(R)$.} Since each of $P_i^+(R),P_i^-(R)$
are at constant distance from $P_i$, which is a connected, codimension-one subgroup in
$X$, then Lemma~3.9 in~\cite{mpe11} implies that $P_i^{\pm }(R)$
is a right coset of $P_i$ and is also a left coset of some 2-dimensional subgroup
$\Sigma _i^{\pm }(R)$ of $X$.
This last property implies that $P_i^{\pm }(R)$ has constant mean curvature,
as every 2-dimensional subgroup in a metric Lie group has this property. Hence it remains to
show that the mean curvature vector of $P_i^{\pm }(R)$ points towards $U_i(R)$.
Note that $\Sigma _i^{\pm }(R)$ must
be disjoint from $P_i^{\pm }(R)$ (otherwise $\Sigma _i^{\pm }(R)=P_i^{\pm }(R)$,
which implies $\vec{0}\in P_i^{\pm }(R)\cap P_i$ hence $R=0$, a contradiction).
In this situation, the classification of codimension-one subgroups
in Theorem~3.6 in~\cite{mpe11} implies that $\Sigma _i^{\pm }(R)$ is one of
the elements in the 1-parameter family $\mathcal{A}_i$
of 2-dimensional subgroups of $X$ that share the
1-dimensional subgroup $P_i\cap [\R^2\rtimes _A\{ 0\} ]$ (also called an
{\it algebraic open book decomposition} of $X$).
In the case that $X$ is unimodular (hence isomorphic to $\R^3$ or $\sol$), then
item~6 of Theorem~3.6
in~\cite{mpe11} implies that all 2-dimensional subgroups of $X$
are minimal, hence $P_i^{\pm }(R)$ are minimal surfaces  and the claim is proved in this case.
Next we will prove the desired mean convexity of $U_i(R)$ in the case that $X$ is
non-unimodular and for $i=1$ (for $i=2$
the argument is similar and we leave it for the reader).
Item~5 of Theorem~3.6 in~\cite{mpe11} ensures that
up to possibly rescaling the metric, $X$ is isometric and isomorphic to $\R^2\rtimes _{A(b)}
\R $ for a diagonal matrix of the form $A(b)=\left(
\begin{array}{cc}
1 & 0  \\
0 & b
\end{array}
\right) $, for some $b\in \R$, $b\neq -1$; furthermore, we can assume that $P_1=\{ y=0\} $ and thus,
the algebraic open book decomposition of $\R^2\rtimes _{A(b)}\R $ that contains $P_1$
as one of its leaves is $\mathcal{A}_1=\{ H_1(\l )\ | \ \l \in \R \cup \{ \infty \} \} $,
where
\[
H_1(\l )=\left\{ \begin{array}{ll}
\{ (x,\frac{\l }{b}(e^{bz}-1),z)\ | \ x,z\in \R \} & \mbox{ if $b\neq 0, \l \in \R $,}
\\
\{ (x,\l z,z)\ | \ x,z\in \R \} & \mbox{ if $b=0, \l \in \R $,}
\\
\R^2\rtimes _{A(b)} \{ 0\} & \mbox{ if $\l =\infty $;}
\end{array}
\right.
\]
(hence $P_1=H_1(0)$). Observe that the 2-dimensional subgroups in $\mathcal{A}_1$
are products with the $x$-factor of proper graphs of the $z$-variable in the $(y,z)$-plane;
this applies in particular
to $P_1$ and to $\Sigma _1^{\pm }(R)$. Therefore,
$\partial _x$ is everywhere tangent to $P_1$ and to $\Sigma _1^{\pm }(R)$. As $P_1^{\pm }(R)$ is a
right coset of $P_1$ and $F_1=\partial _x$ is a right invariant vector field,
then $\partial _x$ is also everywhere tangent to $P_1^{\pm }(R)$. In other words,
$P_1^{\pm }(R)$ is the product with the $x$-factor of a curve in the $(y,z)$-plane.
In fact, this curve must be a proper graph of the $z$-variable (to see this, observe that
every horizontal plane $\R^2\rtimes _{A(b)}\{ z\} $ intersects $P_1^{\pm }(R)$ in a
line parallel to the $x$-axis which is the set of points of $\R^2\rtimes _{A(b)}\{ z\}$ at
distance $R$ from $P_1$). Now the
desired mean convexity of $U_1(R)$ with respect to the inward normal vector can be
understood by considering the related problem for $z$-graphs in the $(y,z)$-plane (i.e.,
after taking quotients in the $x$-factor): to do this, observe that $P_1^+(R)$ lies entirely at
one side of $P_1$, say its right side, see Figure~\ref{fig3}.
\begin{figure}
\begin{center}
\includegraphics[height=9cm]{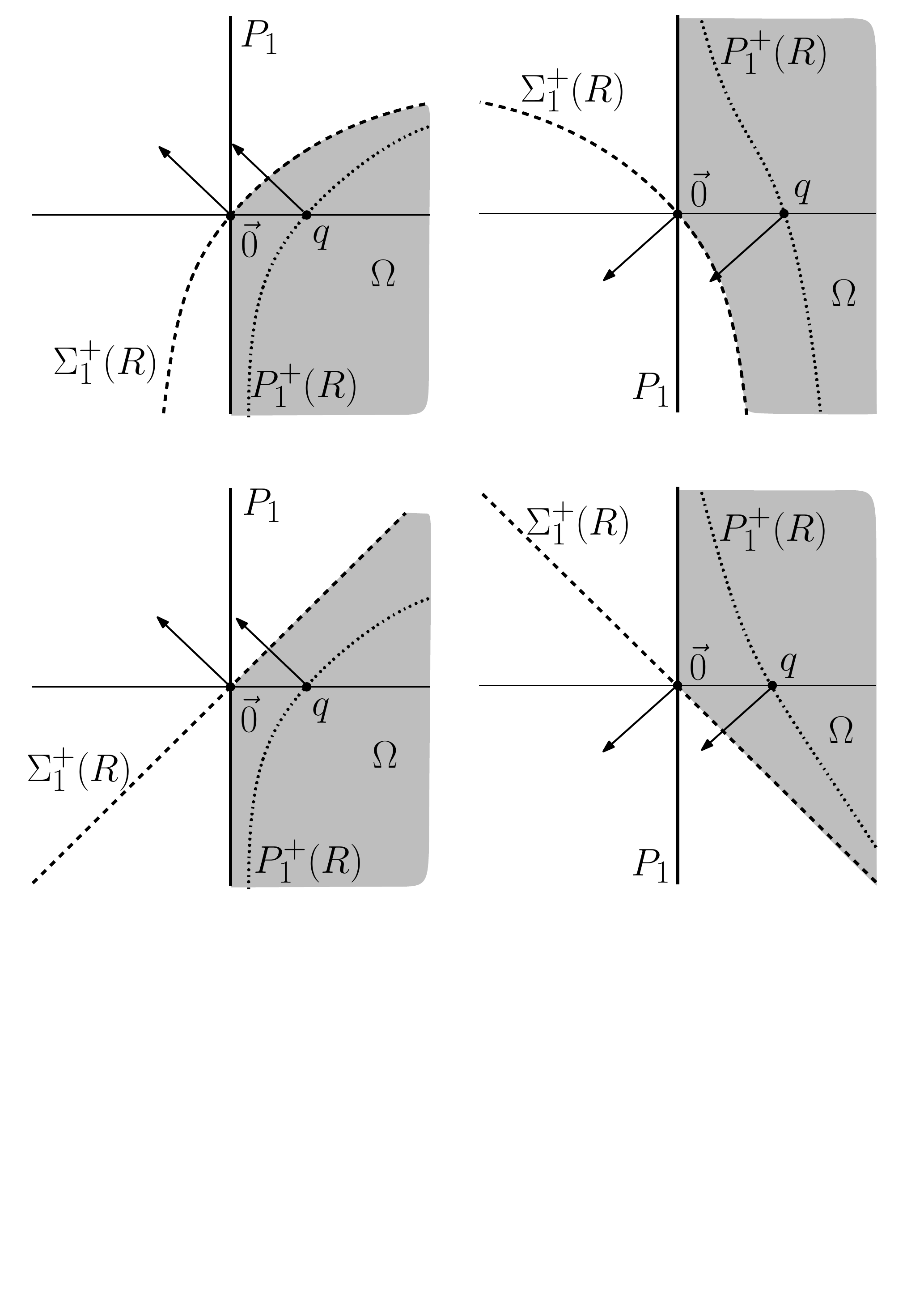}
\caption{The mean curvature vector of $P_1^+(R)={q * \Sigma _1^+(R)}$ (dotted) points
towards the 2-dimensional subgroup $\Sigma _1^+(R)$ (dashed), and hence towards the
vertical plane $P_1$. Above:
the case $b\neq 0$. Below: the case $b=0$. All graphics are representations in
the $(y,z)$-plane.}
\label{fig3}
\end{center}
\end{figure}
We can write $P_1^+(R)=q{*\Sigma _1^+(R)}$, the left coset of $\Sigma _1^+(R)$
obtained after left multiplication by an element $q\in  P_1^+(R)$.
Observe that if $\Sigma _1^+(R)=P_1$ then $\Sigma _1^+(R)$ is minimal
(see Remark~\ref{rem3.2}), and hence $P_1^{\pm }(R)$ is minimal as well,
which gives the desired mean convexity in this case.
Thus, we can assume that $\Sigma _1^+(R)\neq P_1$.
As $P_1^+(R)$ lies at the right side of $P_1$ and is disjoint from $\Sigma _1^+(R)$, then
$P_1^+(R)$ lies in the component $\Omega $ of $[\R^2\rtimes _{A(b)}\R ]-
[P_1\cup \Sigma _1^+(R)]$ that contains $[\R^2\rtimes _{A(b)}\{ 0\} ]\cap \{ y>0\} $
(see Figure~\ref{fig3}).
As the mean curvature vector of $P_1^+(R)$ at $q$ equals the mean curvature vector
of $\Sigma _1^+(R)$ at $\vec{0}$, then a continuity argument in the variable $R$ gives that
the mean curvature vector of $P_1^+(R)$ at $q=q(R)$ points towards $P_1$,
which finishes the proof of the claim.

By the last claim, the maximum principle (for the case $X$ is unimodular) and
the mean curvature comparison principle (when $X$ is non-unimodular)
applied to the foliation $\{ P_i^-(R)\ | \ R>0\} \cup
\{ P_i\} \cup \{ P_i^+(R)\ | \ R>0\} $ gives that every compact minimal surface $M$ with boundary
in $\mathcal{W}(\G ,r)$ lies in the domain $U_i(r)$; hence, $M\subset   U_1(r)\cap U_2(r)$.
If we prove that $U_1(r)\cap U_2(r)\subset \mathcal{W}(\G,2r)$, then we would deduce that
$M\subset \mathcal{W}(\G,2r)$, i.e. the proposition holds with $j=2$ in this case of $X$
admitting two distinct vertical planes which are subgroups of $X$. To check that
$U_1(r)\cap U_2(r)\subset \mathcal{W}(\G,2r)$, let $p=(x,y,z)\in X$;
we will denote by $p_1=(x,0,z)\in P_1$, $p_2=(0,y,z)\in P_2$ and $p_3=(0,0,z)\in \G $. Assume
that the following properties concerning the extrinsic distance $d$ in $X$ hold (we will prove them later):
\begin{enumerate}[(A)]
\item $d(p,p_i)=d(p,P_i)$, for $i=1,2$.
\item $d(p,p_3)=d(p,\G )$.
\end{enumerate}
Under these assumptions, we have:
\[
d(p,\G )=d(p,p_3)\leq d(p,p_1)+d(p_1,p_3)=d(p,P_1)+d(p_1,p_3)\stackrel{(\star)}{=}
d(p,P_1)+d(p,p_2)=d(p,P_1)+d(p,P_2),
\]
where in $(\star)$ we have left multiplied by $(0,y,0)$ (which is an ambient isometry of $X$, thus preserves distances)
in the second summand. Now the inclusion $U_1(r)\cap U_2(r)\subset \mathcal{W}(\G,2r)$ follows directly.
We next prove (A) and (B). Given $q\in X$ and $A\subset X$, let ${\mathcal C}(p,q)$ (resp. ${\mathcal C}(p,A)$)
be the set of piecewise smooth curves $\a \colon [0,1]\to X$ such that $\a (0)=p$ and $\a (1)=q$ (resp. $\a (1)\in A$).
Consider the maps
\[
\begin{array}{ll}
\Theta_1\colon {\mathcal C}(p,P_1)\to {\mathcal C}(p,p_1), & [\Theta_1(\a )](t)=(x,y(t),z),\\
\Theta_2 \colon {\mathcal C}(p,P_2)\to {\mathcal C}(p,p_2), & [\Theta_2 (\a )](t)=(x(t),y,z),\\
\Theta_3 \colon {\mathcal C}(p,\G )\to {\mathcal C}(p,p_3), & [\Theta_3 (\a )](t)=(x(t),y(t),z),
\end{array}
\]
if $\a (t)=(x(t),y(t),z(t))$, $t\in [0,1]$. Since $ e^{zA(b)}=\left(
\begin{array}{cc}
e^z & 0 \\
0 & e^{bz}
\end{array}\right) $, then equation~(\ref{eq:13}) implies that $\Theta _i$ decreases lengths of curves, for each
$i=1,2,3$. Thus, for $i=1,2$ we have
\[
d(p,P_i)= \inf_{\a \in {\mathcal C}(p,P_i)} \mbox{Length}(\a )\geq
\inf_{\a \in {\mathcal C}(p,P_i)} \mbox{Length}(\Theta _i(\a ))
\geq \inf_{\be \in {\mathcal C}(p,p_i)} \mbox{Length}(\be )=d(p,p_i).
\]
Since $p_i\in P_i$, then the last inequality is in fact an equality and (A) is proved. To prove (B) one
uses the same argument, changing $P_i$ by $\G $, $\Theta _i$ by $\Theta _3$ and $p_i$ by $p_3$.
This finishes the proof of the proposition when $X$ is not isomorphic to $\EE $ or  Nil$_3$.

Assume now that $X$ is isomorphic to $\EE $. Thus, after scaling the metric of $X$,
it is isomorphic and isometric to $\R^2\rtimes_{A(c)}\R $ with $A(c)=\left(
\begin{array}{cc}
0 & -c \\
1/c & 0
\end{array}\right) $ for some $c>0$ (see Section~2.7 of~\cite{mpe11}).
For $t\in \R$, define the vertical planes
$P_1(t)=\{x=t\}$, $P_2(t)=\{y=t\}$. For any $t>0$ and $i=1,2$,
let $U_i(t)$ be the slab in $X$ with boundary $P_i(-t)\cup P_i(t)$.
Equation~(\ref{eq:5}) gives that the left translation in $X$ by an element of the form
$(0,0,m \pi)$ with $m\in 2\Z $
writes as $ ({\bf 0},m\pi)*({\bf p}_2,z_2)=(e^{m\pi A(c)}\
{\bf p}_2,m\pi +z_2)=({\bf p}_2,m\pi +z_2)=({\bf p}_2,z_2)*({\bf 0},m\pi)$,
for all ${\bf p}_2\in \R^2$, $z_2\in \R$.
In particular, for all $t>0$ the slab $U_i(t)$ is invariant under the isometry which is given by
left (or right) translation by $(0,0,m \pi)$; note that $\cW (\G ,r)$ is also invariant under left
translation by $(0,0,m \pi)$, for all $r>0$.
Since for $r>0$ fixed, the family of sets
\[
\{ U_1(nr)\cap U_2(nr)\cap \{|z|\leq \pi\} \ | \ n\in \N\}
\]
 forms a compact exhaustion for the horizontal slab
 $\{|z|\leq \pi\}$  and $\cW (\G ,r)\cap \{|z|\leq \pi\}$ is compact, then there exists
a $k\in \N$ such that $\cW (\G ,r)\cap \{|z|\leq \pi\}\subset
U_1(kr)\cap U_2(kr)\cap \{|z|\leq \pi\}$.
Furthermore, this integer $k$ can be chosen independently from $r$ (because
the identity map from $\{|z|\leq \pi\}=\R^2\rtimes _A[-\pi ,\pi ]$
into $\R^2\times [-\pi,\pi]$ with the product metric is a quasi-isometry, for every
$A\in \mathcal{M}_2(\R )$).
The left-invariance property of $U_i(kr)$ and $\cW (\G ,r)$ by left translation by  $(0,0,2 \pi)$
implies that $\cW (\G ,r)$ is contained in $U_1(kr)\cap U_2(kr)$.
This implies that if $M\subset X$ is a compact minimal surface with $\partial M
\subset \cW (\G, r)$, then $\partial M\subset U_1(kr)\cap U_2(kr)$, and by the
maximum principle applied to the family of minimal surfaces $\{ P_i(t)\ | \ t\in \R \} $, $i=1,2$,
we deduce that $M\subset U_1(kr)\cap U_2(kr)$.  On the other hand, since the sets
$ \cW (\G ,r)\cap \{|z|\leq \pi\}$ also form
a compact exhaustion for the slab
$\{|z|\leq \pi\}$, then similar reasoning  shows that there exists a $j\in \N$ independent of $r$ such that
 $U_1(kr)\cap U_2(kr) \subset \cW (\G ,jr)$,  from where we conclude that
$M\subset \mathcal{W}(\G ,jr)$. This finishes the proof of the proposition in the case
$X$ is isomorphic to $\EE$.


 Suppose now that $X$ is isomorphic to Nil$_3$.  After a scaling of the metric,
 we may assume that  $X$ is $\R^2\rtimes_A\R $ with
 $A=\left(
\begin{array}{cc}
0 & 1 \\
0 & 0
\end{array}\right) $. 
In this case there exists a unique vertical plane which is a subgroup of $X$,
namely $P_1=\{ y=0\} $. Since Nil$_3$ is unimodular, Theorem~3.6 in~\cite{mpe11}
implies that the foliation of surfaces at constant
distance from $P_1$ consists of minimal surfaces in $X$.
Given $r>0$, let $P_1^{\pm }(r)=\{ y=\pm r\} $ be the boundary planes of
the closed regular neighborhood $U_1(r)$
of $P_1$ of radius $r$ (one can check that the distance from $\vec{0}$ to  $(0,\pm r,0)$ is $r$),
and let $S^{\pm }(r,R)\subset P_1^{\pm }(r)$ be the round circle of Euclidean radius $R>0$
centered at the point $(0,\pm r,0)$. {\bf We claim that for $R$ much larger than $r$,
there exists an embedded minimal annulus $A(r,R)\subset X$ with boundary
$\partial A(r,R)=S^+(r,R)\cup S^-(r,R)$}. To see this, first note that
\begin{enumerate}[(I)]
\item $U_1(r)$ is quasi-isometric to the Riemannian product
$\R^2\times [-r,r]$ under the mapping arising from normal coordinates on $P_1$.
This is because, in the fixed compact  regular neighborhood of radius 1 of the segment
$\{(0,t,0)\mid t\in [-r,r]\}$ in the slab  $U_1(r)$, the restricted mapping is a quasi-isometry with its
image and the differential of the normal coordinate map is invariant under left translations by elements in
$P_1$.
\item  The Euclidean area of the cylinder $C(r,R)=\{ (x,y,z)\ | \ (x,r,z)\in S^+(r,R), |y|\leq r\} $
is $4\pi rR$.
\end{enumerate}
From (I), (II) we deduce that the area in $X$ of $C(r,R)$ is less than $4\pi crR$, for some  $c>0$
that only depends on $r$.
As the union of the two disks $D^\pm(r,R)\subset P_1^{\pm }(r)$
bounded by $S^\pm (r,R)$ has area $2\pi R^2$ and each  of these disks is area-minimizing
in $X$ (in fact, $D^{\pm }(r,R)$ is the unique solution of the Plateau problem for
boundary $S^{\pm }(r,R)$ in $X$), then the {Douglas Criterion and the}
Geometric Dehn's Lemma for Planar Domains
in Theorem~5 in~\cite{my1} (as adapted in the more general boundary setting of~\cite{my2})  guarantee that for $R\gg r$, there exists
an embedded least-area annulus $A(r,R)$ in
$X$ with boundary $\partial A(r,R)=S^+(r,R)\cup S^-(r,R)$, and our claim is proved.

Consider the family of minimal annuli $\mathcal{F}=\{ p*A(r,R)\ | \ p\in P_1,\ [p*A(r,R)]\cap \cW(\G ,r)=\mbox{\O }\} $,
where $p*A(r,R)$ denotes the left translation of $A(r,R)$ by the element $p$. Observe that $\mathcal{F}$ satisfies the
following properties.
\begin{enumerate}[(${\mathcal F}$-I)]
\item ${\mathcal F}$ is nonempty. Furthermore, ${\mathcal F}$ is invariant under left translation by elements of $\G$
and under the rotation $R_{\G}$ by $\pi/2$ around $\G$ (this follows from the invariance of $\cW(\G ,r)$
and of $P_1$ under
these ambient isometries of $X$).
\item There exists $k>r$ such that for all $t\in [k,\infty )$, we have
\[
[(t,0,0)*(A(r,R))]\cap \cW (\G ,r)=\mbox{\O } \mbox{ and }[(-t,0,0)*R_{\G}(A(r,R))]\cap \cW (\G ,r)=\mbox{\O }.
\]
\end{enumerate}
Let  ${\mathcal F}'$ be the family of left translates of $A(r,R)$ and of $R_\G(A(r,R))$ appearing in
Property~(${\mathcal F}$-II) together with their left translates
by elements in $\G$. It follows that there exists a $j\in \N$
such that
\begin{equation}
\label{U1}
U_1(r)-\left( \cup _{F\in {\mathcal F}'}F\right) \subset \cW(\G,jr);
\end{equation}
the existence of $j$ follows from similar arguments as those
in the proof of Property~(I) above.

Finally, consider a compact minimal surface $M\subset X$ with
boundary $\partial M\subset \mathcal{W}(\G ,r)$. Since $\mathcal{W}(\G ,r)\subset U_1(r)$ and the boundary planes of
$U_1(r)$ are minimal, then the maximum principle implies that $M\subset U_1(r)$. To finish the proof of
{Proposition \ref{prop5.1}}, we will check that $M\subset \cW(\G,jr)$. Otherwise, as $M\subset U_1(r)$ then (\ref{U1}) implies
that $M$ intersects some annulus $F\in {\mathcal F}'$. But then by compactness of $M$ and $F$,
there exists a largest $t>0$
such that $M\cap [(t,0,0)*F] \neq \mbox{\O}$, which gives a contradiction to
the maximum principle since the boundary of $(t,0,0)*F$ is disjoint from $\partial M$.
Now the proof {of Proposition \ref{prop5.1}} is complete.
\end{proof}

\begin{theorem}
\label{ass:box5}
Let $X=\R^2\rtimes_A \R$ be a metric semidirect
product, and let $\G \subset X$ be a vertical geodesic.
Then, given $r,C>0$ there exists $R=R(r,C)>0$
such that for every compact immersed minimal surface
$M\subset \mathcal{W}(\G ,r)$ with the norm of its second fundamental form less than $C$,
the radius of $M$ is less than $R$.
\end{theorem}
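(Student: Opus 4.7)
The plan is to argue by contradiction. Suppose no such $R=R(r,C)$ exists. Then there is a sequence $\{M_n\}$ of compact immersed minimal surfaces $M_n\subset \mathcal{W}(\Gamma,r)$ with $|A_{M_n}|<C$ and $\mathrm{radius}(M_n)\to\infty$. Pick $p_n\in M_n$ realizing the radius, so that the intrinsic distance $d_{M_n}(p_n,\partial M_n)\to\infty$. Since left-translations of $X$ are isometries that send vertical geodesics to vertical geodesics, the translated surface $p_n^{-1}\cdot M_n$ (still denoted $M_n$) contains $\vec{0}$, still satisfies $|A_{M_n}|<C$, and lies inside the cylinder $\mathcal{W}(\Gamma_n,r)$ around the vertical geodesic $\Gamma_n=p_n^{-1}\cdot\Gamma$, whose extrinsic distance to $\vec{0}$ is at most $r$. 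Passing to a subsequence, $\Gamma_n$ converges to a vertical geodesic $\Gamma_\infty$ with $d(\vec{0},\Gamma_\infty)\leq r$.

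Using the uniform second fundamental form bound together with $d_{M_n}(\vec{0},\partial M_n)\to\infty$, a further subsequence converges smoothly on compact subsets of $X$ to a minimal lamination $\mathcal{L}$ of $X$; the leaf $M_\infty$ of $\mathcal{L}$ through $\vec{0}$ is a complete immersed minimal surface with $|A_{M_\infty}|\leq C$ and $M_\infty\subset \mathcal{W}(\Gamma_\infty,r)$ (the last inclusion follows from $M_n\subset \mathcal{W}(\Gamma_n,r)$ and $\Gamma_n\to\Gamma_\infty$). Thus the theorem reduces to showing that \emph{no} complete immersed minimal surface with bounded second fundamental form can be contained in a solid metric cylinder of a metric semidirect product $X$.

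Proving this non-existence is the main obstacle. After one more left-translation, assume $\Gamma_\infty$ is the vertical axis. In the unimodular case, the horizontal planes $\R^2\rtimes_A\{z\}$ are minimal and foliate $X$, so by the maximum principle $z|_{M_\infty}$ cannot attain any interior extremum; combining this with the family of minimal vertical planes $P_L$ of Remark~\ref{rem3.2} employed as horizontal barriers confines $M_\infty$ to a compact region, contradicting its completeness. In the non-unimodular case (after rescaling so that $\mathrm{trace}(A)=2$), each horizontal plane has mean curvature $1$ with downward-pointing mean curvature vector, and the mean curvature comparison principle rules out local minima of $z|_{M_\infty}$. For the horizontal confinement, Assertion~\ref{ass11.6} shows that the least-area disks $D_B(R)$ from Theorem~\ref{lemma2}(2c) are arbitrarily close to vertical half-planes for $R$ large; suitable left-translates of such $D_B(R)$ then furnish near-vertical barriers. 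Finally, a flux argument for a horizontal right-invariant Killing field on a cyclic quotient of $M_\infty$ by a horizontal left-translation, analogous to the argument in the proof of Assertion~\ref{ass11.6} and exploiting the asymptotic decay of horizontal right-invariant vector fields recorded in Proposition~\ref{propos2app} of the Appendix, will deliver the required contradiction.
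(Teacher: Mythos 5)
Your overall architecture (contradiction, renormalization by isometries, extraction of a complete bounded-curvature limit, then barriers) matches the paper's, and the first half is essentially fine: the paper simplifies your normalization by translating only along $\G$ itself, which fixes $\cW(\G,r)$ exactly and so avoids having to argue that the translated cylinders $\cW(\G_n,r)$ subconverge (a point you gloss over, since the compactness of the family of vertical geodesics within distance $r$ of $\vec{0}$ is itself something to check).

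The genuine gap is in the second half, where you must rule out the complete limit surface. First, your unimodular argument does not close: the absence of interior extrema of $z|_{M_\infty}$ together with vertical minimal planes cannot ``confine $M_\infty$ to a compact region,'' because $\cW(\G,r)$ is unbounded in the $z$-direction and nothing you have said prevents $z|_{M_\infty}$ from being unbounded above and below without local extrema. Second, and more fundamentally, every sweeping/first-contact argument you invoke requires the extremum of the sweep parameter to be \emph{attained} on the limit set, which is closed in neither sense you need: $h_\infty(M_\infty)$ need not be a closed subset of $X$, and even its closure is non-compact. The paper's key device, which your proposal is missing, is to replace $h_\infty(M_\infty)$ by
\[
\cM=\ov{\{a* h_{\infty }(M_{\infty}) \mid a \in \G\}},
\]
the closure of the union of \emph{all} vertical left-translates. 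This set is closed, invariant under the vertical flow, meets each horizontal slab in a compact set, and still carries a compact embedded minimal disk $D(q)\subset\cM$ through each of its points; only then does the first-point-of-contact argument (sweeping the minimal planes of Proposition~\ref{prop5.1} in the unimodular case, or the left-translates $t*\Sigma$ of the least-area annuli of Corollary~\ref{corol5.4} in the non-unimodular case) produce a touching point at which the maximum principle forces the barrier into $\cM$, a contradiction. Third, your proposed endgame in the non-unimodular case --- a flux computation on ``a cyclic quotient of $M_\infty$ by a horizontal left-translation'' --- presupposes that $M_\infty$ is invariant under such a translation; in the proof of Assertion~\ref{ass11.6} that invariance was established for $D_\infty$ by a separate, delicate argument, and nothing analogous is available for your $M_\infty$, so this step is not a proof but a hope. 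The paper does not use flux at this stage at all: the annuli of Corollary~\ref{corol5.4} swept across $\cM$ do the work.
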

\begin{proof}
After a fixed left translation of $\G$, we will assume that $\G$ is the $z$-axis in
 $\R^2\rtimes_A \R$.

To prove the theorem we proceed by contradiction.
Suppose that there exist $r,C>0$ and a sequence of compact,  immersed
minimal surfaces $h_n\colon M_n \la \mathcal{W}(\G ,r)$ with the norm of
their second fundamental forms less than $C$ and such that
there exist points $p_n\in M_n$ for which the intrinsic
distances from $p_n$ to the boundaries of
the $M_n$ satisfy
$d_{M_n}(p_n,\partial M_n)>n$ for all $n\in \N $.
Consider the compact domain $Y=\mathcal{W}(\G ,r)\cap[\R^2\rtimes_A\{0\}]$.
After left translating the immersions $h_n$ appropriately
by elements in the 1-parameter
subgroup $\G =\{ (0,0,s)\in \R^2\rtimes _A\R \ | \ s\in \R \} $
and passing to a subsequence,
we may assume that $h_n(p_n)\in Y$ for al $n$
and this sequence of points converges to
a point $q_{\infty}\in Y$.

Since the minimal immersions $h_n$ have uniform
curvature estimates, then there exists
a complete, connected immersed minimal surface
$h_{\infty}\colon M_{\infty} \la \mathcal{W}(\G ,r)$
of bounded second fundamental form that is a limit of the restriction of
(a subsequence, denoted in the same way, of)
the $h_n$ to certain smooth compact domains $\Omega _n\subset M_n$ with
$p_n\in \Omega _n$ and $d_n(p_n,\partial \Omega _n)>n$ for all $n\in \N$,
and such that $h_{\infty }(p_{\infty })=
q_{\infty }$ for some point $p_{\infty }\in M_{\infty }$.
Now consider the closure $\cM$ of the union of all left
translations of $h_{\infty }(M_{\infty })$ by elements in $\G $, i.e.,
\[
\cM=\ov{\{a* h_{\infty }(M_{\infty}) \mid a \in \G\}},
\]
which is a connected subset of $\mathcal{W}(\G ,r)$.
As $h_{\infty }(M_{\infty})$ has bounded
second fundamental form, then, by the same compactness arguments, given any point $q\in \cM $,
there exists a compact embedded minimal disk $D(q) \subset \cM$
with $q \in \Int(D(q))$.

We next consider the special case where $X$ is non-unimodular,
and so we assume $X=\R^2\rtimes_A \R$ with $A$ satisfying \eqref{Axieta};
see Remark~\ref{clasemi}.
Let $L$ be the line given by Corollary~\ref{corol5.4}.
For $p=(x,y,0)\in L$ with $x^2+y^2$ sufficiently large, the set $Y$
lies in the interior of the  strip $S\subset \R^2\rtimes_A \{0\}$ bounded by the
pair of Euclidean lines $L_p,L_{-p}\subset \R^2\rtimes_A \{0\}$
that are orthogonal to $L$ at the respective points $p,-p$.  By
Corollary~\ref{corol5.4}, there exist circles $C_p,C_{q=-p}$
with $p\in C_p$, $q\in C_q$ such that $C_p\cup C_q$ does not intersect the interior of the
strip $S$ and $C_p\cup C_q$ is the boundary of a
least-area embedded annulus $\Sigma\subset \R^2\rtimes_A [0,\infty)$.
Let  $L^\perp$ in  $\R^2\rtimes_A \{0\}$ be the line perpendicular to
$L$ at $\vec{0}$. For $t\in L^\perp$, let $t*\Sigma$ denote the left translation
of $\Sigma$ by $t$.  Note that for some $t_0\in \R$, $(t_0*\Sigma )\cap \cM\neq
\mbox{\rm \O}$
and that for $|t|$ large,  $(t*\Sigma )\cap \cM= \mbox{\rm \O}$.  It follows that there exists a
$t_1\in L^\perp$ with largest norm such that
$t_1*\Sigma $ intersects $\cM$ at some point $p_{t_1}$ and near $p_{t_1}$ the
set $\cM$ lies on one side of $t_1*\Sigma$.  This implies that there exists an embedded minimal disk
$D(p_{t_1})\subset \cM$ containing $p_{t_1}$, such that $D(p_{t_1})$
lies on one side of $t_1*\Sigma$. Now the maximum principle for minimal surfaces gives that
$t_1*\Sigma \subset \cM$, which is false since $\partial (t_1*\Sigma) \cap \cM=\mbox{\O}$.
This contradiction proves the proposition in this special case where $X$ is non-unimodular .

Finally, we consider the remaining  case where $X$ is unimodular (hence ${\rm tr} (A)=0$). Consider the  foliation of
$X$ by minimal planes produced in the proof of
Proposition~\ref{prop5.1}, i.e. (with the notation in that proposition),
the planes $P_1^-(R)$ at constant distance $R>0$
from $P_1$ in the case that $X$ is isomorphic to $\sol$, Nil$_3$ or $\rth$,
and the periodic planes $P_1(t)=\{ x=t\} $,
in the case of $X$ is isomorphic to $\EE$.  Since in all of these cases
there exist one of these minimal planes $P$ such that $P\cap \cM\neq\mbox{\O}$ and $\cM$ lies on one side of $P$,
the argument in the previous paragraph with $P$ in place of $\Sigma $
easily generalizes to give a contradiction. This contradiction
completes the proof of Theorem~\ref{ass:box5}.
\end{proof}

As a direct consequence of Theorem~\ref{ass:box5} and the classical curvature estimates
for stable minimal surfaces~\cite{ros9,sc3}, we obtain:

\begin{corollary}
\label{corrad}
Let $X=\R^2\rtimes_A \R$ be a metric semidirect
product,
and let $\cW(\G ,r)\subset X$ denote a solid metric cylinder in $X$ of radius $r>0$
around a vertical geodesic $\Gamma\subset X$. Then, there are no complete stable
minimal surfaces contained in $\cW(\G ,r)$.
\end{corollary}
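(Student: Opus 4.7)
The plan is to argue by contradiction using Theorem~\ref{ass:box5} together with the Schoen--Ros curvature estimates cited in the statement. Suppose that a complete stable minimal surface $\Sigma\subset \cW(\G,r)$ exists; I will produce an exhaustion of $\Sigma$ by compact stable minimal subdomains whose radii diverge, contradicting the uniform radius bound supplied by Theorem~\ref{ass:box5}.

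First I would invoke the curvature estimates of Schoen~\cite{sc3} and Ros~\cite{ros9}: for any stable minimal surface in the homogeneously regular 3-manifold $X$, the norm of the second fundamental form at a point $p$ is controlled in terms of the intrinsic distance from $p$ to the boundary, with a constant that depends only on the ambient geometry (which is uniform thanks to left invariance). Since $\Sigma$ is complete and has empty boundary, letting the distance to $\partial \Sigma$ tend to infinity yields a uniform constant $C=C(X)>0$ with $|A_\Sigma|\leq C$ everywhere on $\Sigma$.

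Next, fix a base point $p_0\in \Sigma$. Since the induced metric on $\Sigma$ is complete, Hopf--Rinow ensures that the closed intrinsic balls $\overline{B_\Sigma(p_0,n)}$ are compact for every $n\in \N$. A standard smoothing argument (or Sard's theorem applied to the distance function to $p_0$) produces, for each $n$, a compact smooth subdomain $M_n\subset \Sigma$ with smooth boundary such that $B_\Sigma(p_0,n)\subset M_n$. Each $M_n$ is a compact immersed minimal surface contained in $\cW(\G,r)$, has second fundamental form inheriting the bound $|A_{M_n}|\leq C$ from $\Sigma$, and has boundary contained in $\Sigma\subset \cW(\G,r)$. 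Since any curve in $M_n$ is also a curve in $\Sigma$,
\[
\mathrm{radius}(M_n)\;\geq\; d_{M_n}(p_0,\partial M_n)\;\geq\; d_\Sigma(p_0,\partial M_n)\;\geq\; n.
\]

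Finally, Theorem~\ref{ass:box5} applied with the constants $r$ and $C$ produces some $R=R(r,C)>0$ such that every compact immersed minimal surface in $\cW(\G,r)$ with $|A|\leq C$ has radius at most $R$. Choosing $n>R$ contradicts the lower bound above, so $\Sigma$ cannot exist. The main ingredients are the two inputs cited in the statement, and the argument is essentially routine; the only point requiring a modicum of care is the uniform global curvature bound for the complete stable $\Sigma$, but this follows directly from the Schoen--Ros estimate applied in the homogeneously regular ambient $X$ by sending the distance to the empty boundary to infinity.
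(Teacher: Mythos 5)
Your proposal is correct and is precisely the argument the paper intends: the corollary is stated there as a direct consequence of Theorem~\ref{ass:box5} combined with the Schoen--Ros curvature estimates, and you have simply filled in the routine details (uniform bound $|A_\Sigma|\leq C$ for the complete stable surface, exhaustion by compact subdomains of radius at least $n$, contradiction with $R(r,C)$). No gaps; the approach matches the paper's.
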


In order to prove Theorem~\ref{th:intro} stated in the Introduction
we will need an auxiliary construction for the case that $X$ is
non-unimodular with positive Milnor $D$-invariant.
To do this, in the remainder of this section we fix
$\alfa\in[0,1)$ and $\beta\in[0,\infty)$, we consider the matrix $A=A(\alfa,\beta)$ given
by~(\ref{Axieta}), and the non-unimodular metric Lie group
$X=X(\alfa,\beta)=\R^2\rtimes_A\R$ with its usual left invariant metric
$\langle ,\rangle $ determined by $A$ (see Definition~\ref{def2.1}).
Under our hypotheses on
$\alfa,\beta$, we have that the Milnor $D$-invariant of $X$ is positive.
Conversely, every non-unimodular metric Lie
group with positive Milnor $D$-invariant can be expressed as $X(\alfa,\beta)$ for
some $\alfa\in [0,1)$ and $\beta \geq 0$, see Section~\ref{sec:background}.

The 1-parameter subgroup $\{(0,0,s)\in \R^2\rtimes_A \R\mid s\in
\R\}$ of $X$ generates under left multiplication a right invariant
vector field $F_3$ of $X$ (see equation~(\ref{eq:6}) where the
notation for the matrix $A$ is different from the one used here). Next we will
study the mean convexity of solid cylinders in $X$ obtained after flowing the domains enclosed
by a family of homothetic ellipses in $\R^2\rtimes _A\{ 0\} $  through the
1-parameter group
of isometries $\{ \phi _s\ | \ s\in \R \} $ associated to $F_3$,
namely the left translations by elements in the $z$-axis
of $\R^2\rtimes _A\R $.
The technical property stated in Proposition~\ref{Plateau2} will be used in Theorem~\ref{corrad2} below, in order
to obtain the desired radius estimate for stable minimal surfaces in $X$ that generalizes Corollary~\ref{corrad}.

Consider an ellipse $C_{\mu }=\{ (x,y,0)\in \R^2\rtimes _A\{ 0\} \ | \ x^2+ \frac{y^2}{\mu ^2}=1\} $,
where $\mu >0$ is to be determined, and the family of homothetic ellipses
\begin{equation}
\label{eq:ellipse} rC_{\mu }=\{(rx,ry,0)\mid (x,y,0)\in C_{\mu }\} , \qquad
r>0.
\end{equation}
Let $rE_\mu\subset \R^2\rtimes_A\{0\}$ denote the compact disk with boundary $rC_{\mu }$, and
let
\begin{equation}
\label{eq:F3cyl}
\Omega (r)=\bigcup_{s\in \R}\phi _s(rE_{\mu })
\end{equation}
be the $F_3$-invariant closed solid cylinder obtained after flowing $rE_{\mu }$
by the isometries that generate $F_3$.
\begin{proposition}
\label{Plateau2}
Let $X=\R^2\rtimes_A \R$ be a metric semidirect
product where $A$ is as in equation (\ref{Axieta}) with $\a\in[0,1)$
and $\be \in[0,\infty)$. Then, there exist $\mu >0$ and $r_0>0$ such that the
$F_3$-invariant solid cylinder $\Omega (r)$ over $rE_{\mu }$ defined in~(\ref{eq:F3cyl})
is strictly mean convex for every $r\geq r_0$.
\end{proposition}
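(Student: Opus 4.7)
The plan is to exploit the $F_3$-invariance of $\partial\Omega(r)$ to reduce the mean curvature computation to a single horizontal slice, and then identify the leading behaviour of $H$ as $r\to\infty$. Since the one-parameter family $\{\phi_s\}$ consists of isometries (left translations by the $z$-axis subgroup) and $\partial\Omega(r)$ is $\phi_s$-invariant by construction, the mean curvature of $\partial\Omega(r)$ is constant along every $F_3$-orbit, so it suffices to show that for suitable $\mu>0$ it is uniformly strictly negative with respect to the outward unit normal at each point of the base ellipse $rC_\mu\subset\R^2\rtimes_A\{0\}$ when $r$ is large. The slice $\{z=0\}$ is favourable because there the canonical frame $\{E_1,E_2,E_3\}$ coincides with $\{\partial_x,\partial_y,\partial_z\}$ and the metric is Euclidean in those coordinates, although the Levi-Civita connection still has the nonzero Christoffel symbols given in \eqref{eq:12}.

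Parametrize the tube by $X(s,\theta)=\phi_s(r\cos\theta,r\mu\sin\theta,0)$; at $s=0$ one reads off $X_\theta=r\vec u$ and $X_s=F_3(p)=r\vec v+E_3$, where $\vec u=(-\sin\theta,\mu\cos\theta)$ and $\vec v=A(\cos\theta,\mu\sin\theta)^T$ are horizontal vectors, and, setting $\vec u^\perp=(\mu\cos\theta,\sin\theta)$, the outward unit normal has the form $N=\lambda(\vec u^\perp-r(\vec v\cdot\vec u^\perp)E_3)$ with $\lambda=(|\vec u|^2+r^2(\vec v\cdot\vec u^\perp)^2)^{-1/2}\sim 1/[r(\vec v\cdot\vec u^\perp)]$ as $r\to\infty$. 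Using the explicit entries of $A=A(\alpha,\beta)$ from~\eqref{Axieta} a direct calculation gives
\[
\vec v\cdot\vec u^\perp=\mu(1+\alpha\cos 2\theta)+\beta\bigl[(1+\alpha)-(1-\alpha)\mu^2\bigr]\cos\theta\sin\theta,
\]
so the choice $\mu=\sqrt{(1+\alpha)/(1-\alpha)}$ (well defined since $\alpha\in[0,1)$) eliminates the $\beta$-dependent cross term and produces $\vec v\cdot\vec u^\perp=\mu(1+\alpha\cos 2\theta)\geq\mu(1-\alpha)>0$ uniformly in $\theta$.

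With this $\mu$ fixed, I expand the second fundamental form coefficients $e=\langle\nabla_{X_s}X_s,N\rangle$, $f=\langle\nabla_{X_s}X_\theta,N\rangle$ and $g=\langle\nabla_{X_\theta}X_\theta,N\rangle$ at $s=0$ using the Koszul formulas~\eqref{eq:12}. A useful simplification here is that the components $w^i$ of $F_3$ in the left-invariant frame are $\phi_s$-invariant functions (since $\phi_s$ preserves both $F_3$, as the generator of its own flow, and each $E_i$, being a left translation), hence $F_3(w^i)\equiv 0$. The leading contributions as $r\to\infty$ then come out as
\[
e\sim -r^2\,\vec v^T S\vec v,\qquad f\sim -r^2\,\vec u^T S\vec v,\qquad g\sim -r^2\,\vec u^T S\vec u,
\]
with $S=\tfrac12(A+A^T)$, paired with $E\sim r^2|\vec v|^2$, $F\sim r^2(\vec v\cdot\vec u)$, $G=r^2|\vec u|^2$, and $EG-F^2\sim r^4(\vec v\cdot\vec u^\perp)^2$. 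Decomposing $\vec v$ in the orthogonal basis $\{\vec u,\vec u^\perp\}$, which have equal length here, produces the clean identity
\[
-|\vec u|^2\,\vec v^T S\vec v+2(\vec v\cdot\vec u)\,\vec u^T S\vec v-|\vec v|^2\,\vec u^T S\vec u=-(\vec v\cdot\vec u^\perp)^2\,\mathrm{trace}(S),
\]
and substituting into $2H=(eG-2fF+gE)/(EG-F^2)$ together with $\mathrm{trace}(S)=\mathrm{trace}(A)=2$ yields $H(r,\theta)\to -1$ uniformly in $\theta$. Hence there exists $r_0>0$ such that $H(r,\theta)\leq -1/2$ for every $r\geq r_0$ and every $\theta$, proving strict mean convexity of $\Omega(r)$.

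The main obstacle is the algebraic identity in the last display: everything else is bookkeeping with the Koszul formulas and an asymptotic expansion, but that identity is what pins the limit to $-1$ and makes the normalization $\mathrm{trace}(A)=2$ enter decisively. The limit $-1$ is in any case geometrically natural, since as $r\to\infty$ the outward normal $N$ tilts towards $-E_3$ and the tangent plane to the tube flattens into the horizontal leaf $\R^2\rtimes_A\{0\}$, whose mean curvature with respect to the downward normal is $-\mathrm{trace}(A)/2=-1$. The choice $\mu^2=(1+\alpha)/(1-\alpha)$ is the only delicate modelling step, and it is needed precisely to keep $\vec v\cdot\vec u^\perp$ uniformly bounded away from zero so that the asymptotic expansion remains valid uniformly in $\theta$.
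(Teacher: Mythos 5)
Your proposal is correct, and at the structural level it mirrors the paper's proof: both parametrize $\partial\Omega(r)$ by flowing the ellipse through $\{\phi_s\}$, use $F_3$-invariance to reduce the mean curvature computation to the slice $z=0$, and study the sign of $eG-2fF+gE$ there. Your quantity $\vec{v}\cdot\vec{u}^{\perp}$ equals one half of the function $\varrho_{\alpha,\beta,\mu}(t)=2\mu+2\alpha\mu\cos(2t)+\beta\bigl[1+\alpha-(1-\alpha)\mu^2\bigr]\sin(2t)$ that the paper isolates as the cube root of the $r^6$-coefficient in its exact expansion of $\Delta(t)\,(eG-2fF+gE)(t,0)$, and your leading asymptotics $e\sim-r^2\vec{v}^{T}S\vec{v}$, $f\sim-r^2\vec{u}^{T}S\vec{v}$, $g\sim-r^2\vec{u}^{T}S\vec{u}$ are consistent with the paper's formulas for $\nabla_{\Phi_t}\Phi_t$, $\nabla_{\Phi_t}\Phi_s$ and $\langle N,\nabla_{\Phi_s}\Phi_s\rangle$, so both arguments ultimately hinge on keeping $\vec{v}\cdot\vec{u}^{\perp}$ uniformly positive in $t$. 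Where you genuinely diverge is the endgame. The paper keeps $\mu$ free, bounds $\varrho$ from below by $2\mu$ minus the norm of the coefficient vector of $(\cos 2t,\sin 2t)$, and then establishes positivity (its Property (R)) by maximizing $\chi(\lambda)=4(1-\alpha^2)\lambda-\beta^2[1+\alpha-(1-\alpha)\lambda]^2$, arriving at $\mu=\sqrt{\lambda_0}$ with $\lambda_0=\tfrac{1+\alpha}{1-\alpha}(1+2\beta^{-2})$. You instead fix $\mu^2=\tfrac{1+\alpha}{1-\alpha}$, which annihilates the $\beta$-dependent term identically and yields $\varrho=2\mu(1+\alpha\cos 2t)\ge 2\mu(1-\alpha)>0$ with no optimization; this choice also satisfies Property (R), so it is a legitimate and simpler instance of the same mechanism. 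Moreover, your polarization identity $|\vec{u}|^2\vec{v}^{T}S\vec{v}-2(\vec{u}\cdot\vec{v})\vec{u}^{T}S\vec{v}+|\vec{v}|^2\vec{u}^{T}S\vec{u}=(\vec{v}\cdot\vec{u}^{\perp})^2\,\mathrm{trace}(S)$ pins the limit $H\to-1$ and explains it geometrically via the mean curvature $-\mathrm{trace}(A)/2$ of the horizontal leaves with respect to the downward normal, whereas the paper only extracts the sign of the leading coefficient; this is a nice conceptual gain. The one thing a complete write-up would still have to supply is the verification of the stated asymptotics of $e,f,g$ (the covariant-derivative and Killing-field computations that occupy most of the paper's proof); these do come out exactly as you claim.
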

\begin{proof}
Fix a positive $\mu $ to be determined later. Given $r>0$,
parameterize $rC_{\mu }$ by $\gamma=\gamma(t)=(x(t),y(t),0)$, where
\begin{equation}
\label{eq:g}
x(t)=r\cos t , \quad
y(t)=r\mu \sin t,\qquad t\in [0,2\pi ].
\end{equation}
A parametrization $\Phi $ of the $F_3$-invariant
cylinder given by the boundary $\Sigma =\Sigma (r)$ of $\Omega (r)$
is obtained by flowing $\g $ through the 1-parameter group $\{ \phi _s
\ | \ s\in \R \} $, i.e.,
\[
\Phi (t,s)=\phi _s(\g (t)), \quad (t,s)\in [0,2\pi ]\times \R ,
\]
 where $\phi_s({\bf p},z)=(e^{sA}{\bf p},s+z)$ for all $s,z\in \R$ and ${\bf
p}\in \R^2$ (${\bf p}$ is considered to be a column vector).

The mean curvature $H=H(t,s)$ of $\Sigma $ is given by the
well-known formula
\begin{equation}
\label{eq:Hcyl} 2(EG-F^2)H=eG-2fF+gE,
\end{equation}
where $E,F,G$ and $e,f,g$ are respectively the coefficients of the
first and second fundamental form of $\Sigma $ (these coefficients are
functions of $(t,s)$):
\begin{equation}
\label{eq:I,II}
\begin{array}{ccc}
E=\| \Phi _t\| ^2,& F=\langle \Phi _t,\Phi _s\rangle ,& G=\| \Phi _s\| ^2,
\\
e=\langle N,\nabla _{\Phi _t}\Phi _t\rangle ,& f=\langle N,\nabla_{\Phi _t}\Phi _s\rangle ,
& g=\langle N,\nabla _{\Phi _s}\Phi _s\rangle ,
\end{array}
\end{equation}
where $\Phi _t=\frac{\partial \Phi }{\partial t}$, $\Phi _s=\frac{\partial \Phi }{\partial s}$ and
$N=\frac{\Phi _t\times \Phi _s}{\| \Phi _t\times \Phi _s\| }$ is the unit normal vector field to $\Sigma $.
Observe that $\Phi _t(t,0)$ defines the counterclockwise orientation on $rC_{\mu }$
 and that $\Phi _s(t,0)$ points upward. Therefore, $N(t,0)$ points outward $\Omega (r)$
  along~$\g $.
Since $\Sigma $ is $F_3$-invariant, the strict mean convexity of $\Omega (r)$ will follow from
the existence of some $\mu >0$ (depending solely on $\a,\be $) such that the function
$t\in [0,2\pi ]\mapsto (eG-2fF+gE)(t,0)$ is strictly negative for $r>0$ large enough.

Note that
\begin{equation}
\label{Phit}
\Phi _t(t,0)=\g '(t)=\left(
\begin{array}{c}
x'(t)
\\
y'(t)
\\
0
\end{array}
\right) =
\left[
\begin{array}{c}
x'(t)
\\
y'(t)
\\
0
\end{array}
\right]
=\left[
\begin{array}{c}
-\frac{1}{\mu }y(t)
\\
\mu x(t)
\\
0
\end{array}
\right] ,
\end{equation}
where the parentheses (resp. brackets) refer to coordinates with respect
to the basis $\{ \partial _x,\partial _y,\partial _z\} $ (resp. to
the usual orthonormal basis $\{ E_1,E_2,E_3\} $ of the Lie algebra
of $X$ given by~(\ref{eq:6*})); in general, the change of coordinates between the two
bases at a point $(x,y,z)\in \R^2\rtimes _A\R $ is
\begin{equation}
\label{changebasis}
\left[
\begin{array}{c}
a
\\
b
\\
c
\end{array}
\right] =
\left(
\begin{array}{c}
e^{zA}\left( \begin{array}{c}
a
\\
b
\end{array}\right)
\\
c
\end{array}
\right) , \quad a,b,c\in \R.
\end{equation}
Also,
$\Phi _s(t,s)=(F_3)_{\Phi (t,s)}$ and so, the globally defined right
invariant vector field $F_3$ extends $\Phi _s$. Using (\ref{eq:6}) (recall
that the entries of the matrix $A$ are given by (\ref{Axieta})), we
have
\begin{equation}
\label{eq:F3bis}
F_3(x,y,z)=\left(
\begin{array}{c}
\de
\\
\ve
\\
1
\end{array}
\right)
\stackrel{(\ref{changebasis})}{=}
\left[
\begin{array}{c}
\de a_{11}(-z)+\ve a_{12}(-z)
\\
\de a_{21}(-z)+\ve a_{22}(-z)
\\
1
\end{array}
\right],
\end{equation}
where
\begin{equation}
\label{eq:deve}
\de (x,y)=(1+\a )x-(1-\a )\be y,\quad
\ve (x,y)=(1+\a )\be x+(1-\a )y,
\end{equation}
and $a_{ij}(z)$ are the entries of the matrix $e^{zA}$, see
(\ref{eq:exp(zA)}). In particular,
\begin{equation}
\label{Phis}
\Phi _s(t,0)=(F_3)_{\g (t)}=
\left[
\begin{array}{c}
\de (t)
\\
\ve (t)
\\
1
\end{array}
\right] ,
\end{equation}
where $\de (t)=\de (\g (t))$ and $\ve (t)=\ve (\g (t))$.

From (\ref{Phit}), (\ref{Phis}) we can compute the coefficients of the first fundamental form at
points of the form $\Phi (t,0)$:
\begin{equation}
\label{1stff}
\left\{
\begin{array}{rcl}
E(t,0)&=&x'(t)^2+y'(t)^2,
\\
F(t,0)&=&\de (t)x'(t)+\ve (t)y'(t),
\\
G(t,0)&=&1+\de (t)^2+\ve (t)^2.
\end{array}
\right.
\end{equation}
The unit normal vector field at points of the form $\Phi (t,0)$ is given by
\begin{equation}
\label{Ncyl} N(t,0)=\frac{1}{\Delta (t)}(\Phi _t\times \Phi _s)(t,0)=
\frac{1}{\Delta (t)} \left[
\begin{array}{c}
y'(t)
\\
-x'(t)
\\
\ve (t)x'(t)-\de (t)y'(t)
\end{array}
\right],
\end{equation}
where $\Delta (t)=\| \Phi _t\times \Phi _s\| (t,0)$.

We next compute the coefficients of the second fundamental form of $\Sigma$. Using (\ref{Phit})
and denoting by $\frac{DW}{dt}$ the covariant
derivative of a vector field $W$ along $\g $, we have
\[
\label{eq:nablaXtXt}
 \left( \nabla _{\Phi _t}\Phi _t\right) (t,0)\stackrel{(\ref{Phit})}{=}
 \frac{D}{dt}\left( x'(t)(E_1)_{\g (t)}+y'(t)(E_2)_{\g (t)}\right)
\]
\[
=\left[
\begin{array}{c}
x''(t)
\\
y''(t)
\\
0
\end{array}
\right]
+x'(t)\nabla _{\g'(t)}E_1+y'(t)\nabla _{\g'(t)}E_2.
\]
\begin{equation}
\label{Phitt}
\stackrel{(\ref{eq:12})}{=}
\left[
\begin{array}{c}
x''(t)
\\
y''(t)
\\
(1+\a )x'(t)^2+2\a \be x'(t)y'(t)+(1-\a )y'(t)^2
\end{array}
\right].
\end{equation}

Analogously,
\[
 \left( \nabla _{\Phi _t}\Phi _s\right) (t,0)= \frac{D(F_3\circ \g )}{dt}
 \stackrel{(\ref{Phis})}{=}
 \frac{D}{dt}\left( \de (t)(E_1)_{\g (t)}+\ve (t)(E_2)_{\g (t)}
 +(E_3)_{\g (t)}\right)
 \]
 \[
 =\left[
\begin{array}{c}
\de '(t)
\\
\ve '(t)
\\
0
\end{array}
\right]
+\de (t)\nabla _{\g'(t)}E_1+\ve (t)\nabla _{\g'(t)}E_2+
\nabla _{\g'(t)}E_3
\]
\begin{equation}
\label{eq:nablaXtXs}
\stackrel{(\ref{eq:12})}{=}
\left[
\begin{array}{c}
\de '(t)-(1+\a )x'(t)-\a \be y'(t)
\\
\ve '(t)-\a \be x'(t)-(1-\a )y'(t)\\
\de (t)[(1+\a )x'(t)+\a \be y'(t)]+\ve (t)[\a \be x'(t)+(1-\a )y'(t)]
\end{array}
\right] .
\end{equation}

To compute $g=\langle N,\nabla _{\Phi _s}\Phi _s\rangle = \langle N,\nabla
_{\Phi _s}F_3\rangle $ we use that $F_3$ is a Killing vector field that extends $\Phi_s$:
\[
g(t,s)=-\langle \Phi _s,\nabla _{N}F_3\rangle =-\frac{1}{2}N\left( \|
F_3\|^2\right)
\]
\[
\stackrel{(\ref{eq:F3bis})}{=} -\left( \de a_{11}(-z)+\ve a_{12}(-z)\right)
N\left( \de a_{11}(-z)+\ve a_{12}(-z)\right)
\]
\[
-\left( \de
a_{21}(-z)+\ve a_{22}(-z)\right)
N\left( \de a_{21}(-z)+\ve a_{22}(-z)\right) .
\]
Hence,
\begin{equation}
\label{g(t,0)}
g(t,0)= -\de (t)\left\{  N(\de )+\de (t)N\left( a_{11}(-z)\right) +\ve
(t)N\left( a_{12}(-z)\right) \right\}
\end{equation}
\[
-\ve (t)\left\{  N(\ve )+\de (t)N\left( a_{21}(-z)\right) +\ve (t)N \left(
a_{22}(-z)\right) \right\},
\]
where we have simplified the notation $N(t,0)$ by $N$.
By using (\ref{eq:deve}) and (\ref{Ncyl}) one has (at $(t,0)$):
\begin{equation}
\label{Nderv}
\left\{
\begin{array}{rcl}
N(\de )&=&\frac{1}{\Delta (t)}\left[ (1+\a )y'(t)+(1-\a )\be x'(t)\right] ,
\\
N(\ve )&=&{\textstyle \frac{1}{\Delta (t)}}\left[
(1+\a )\be y'(t)-(1-\a )x'(t)\right] ,
\\
N(a_{ij}(-z))&=&-{\textstyle \frac{1}{\Delta (t)}}\left[ \ve (t)x'(t)-\de
(t)y'(t)\right] a_{ij}'(0),\quad \mbox{for }\ i=1,2.
\end{array}
\right.
\end{equation}
Since $(a_{ij}(z))_{i,j}=e^{zA}$, then
$(a'_{ij}(0))_{i,j}=A$. Now, (\ref{g(t,0)}) and (\ref{Nderv})  give
\begin{equation}
\label{eq:Deltag}
\begin{array}{rcl}
\Delta (t)g(t,0)\hspace{-.2cm}&=&\hspace{-.2cm} -\left\{ (1+\a )y'+(1-\a )\be x'
-\left[ (1+\a )\de -(1-\a )\be \ve \right] (\ve x'-\de y')\right\} \de
\\
& &\hspace{-.2cm} -\left\{ (1+\a )\be y'-(1-\a )x'-\left[ (1+\a )\be \de +(1-\a )\ve \right]
(\ve x'-\de y')\right\} \ve .
\end{array}
\end{equation}

A direct substitution from (\ref{eq:g}), (\ref{eq:I,II}), (\ref{eq:deve}), (\ref{Phitt}),
(\ref{eq:nablaXtXs}) and (\ref{eq:Deltag}) gives that
\[
\begin{array}{rcl}

\Delta (t)\, (eG-2fF+gE)(t,0)\hspace{-.2cm} &=&\hspace{-.2cm}-\mu r^2
+r^4h_{\a ,\be ,\mu }(t)
\\
\rule{0cm}{.5cm} & &\hspace{-.2cm} -\frac{r^6}{4}\left\{
2\mu +2\a \mu \cos(2t)+\be [1+\a -(1-\a )\mu ^2]\sin(2t)\right\}^3,
\end{array}
\]
where $h_{\a ,\be ,\mu }(t)$ is a smooth, $\pi $-periodic function of $t$
depending on the parameters $\a ,\be ,\mu $.
From the last displayed expression we deduce that the mean curvature of $\Sigma $
with respect to $N$ is strictly negative for all $r$ large enough provided that the expression
\[
\varrho _{\a ,\be ,\mu }(t)=2\mu +2\a \mu \cos(2t)+\be [1+\a -(1-\a )\mu ^2]\sin(2t)
\]
is positive as a function of $t\in [0,2\pi ]$, for any given
values $\a \in [0,1)$, $\be \in [0,\infty )$
and for some choice of $\mu =\mu (\a ,\be )>0$. Clearly,
\[
\varrho _{\a ,\be ,\mu }(t)=2\mu +\langle u,v(t)\rangle \geq 2\mu -\|u\| ,
\]
where $u=u(\a ,\be ,\mu )=\left( 2\a \mu ,\be [1+\a -(1-\a )\mu ^2]\right)$,
$v(t)=\left( \cos(2t),\sin(2t)\right)\in \R^2$ and both the last inner product and norm
refer to the usual flat metric in $\R^2$.
 Therefore, the proposition will be proved if we show that the following elementary
property holds:
\begin{enumerate}[(R)]
\item Given $(\a ,\be )\in [0,1)\times [0,\infty )$, there exists $\mu >0$ such that
 \[
 4\mu ^2>\| u\| ^2=4\a ^2\mu ^2+\be ^2[1+\a -(1-\a )\mu ^2]^2.
 \]
\end{enumerate}
If $\be =0$, then Property (R) clearly holds as $\a ^2<1$. If $\be >0$, then
the proof of Property (R) follows from an elementary analysis of the
function $\chi (\l )=4(1-\a ^2)\l -\be ^2[1+\a -(1-\a )\l ]^2$, which has a (unique)
maximum at $\l _0=\frac{1+\a }{1-\a }\left( 1+2\be ^{-2}\right) >0$, with
value $\chi (\l _0)=4(1+\a )^2\left( 1+\be ^{-2}\right) >0$. Now the desired
$\mu >0$ can be chosen as $\mu =\sqrt{\l _0}$. This completes the proof
of the proposition.
\end{proof}


As a consequence of the mean convexity of $\Omega(r)$, we have:

\begin{proposition}
\label{ass:box2}
Let $X=\R^2\rtimes_A \R$ be a non-unimodular metric semidirect
product, where $A$ is as in equation (\ref{Axieta}) with $\a \in[0,1)$
and $\be \in[0,\infty)$. Let $\mu ,r_0>0$ and $\Omega (r)$ be the numbers
and related $F_3$-invariant, mean convex solid cylinder given in
Proposition~\ref{Plateau2}. Suppose that
$r>r_0$.
Then every compact immersed minimal surface $M\subset X$ whose boundary lies in
   $\Omega (r)$ satisfies that $M\subset \Omega (r)$.
\end{proposition}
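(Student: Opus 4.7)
The strategy is a standard sweeping argument, using the one-parameter family of mean convex barriers $\{\Omega(r')\}_{r'\ge r_0}$ supplied by Proposition~\ref{Plateau2}.

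First I would verify two basic properties of this family. (a) For $r\le r'<r''$, one has $\Omega(r')\subset\mathrm{int}(\Omega(r''))$: the ellipses $r'E_\mu$ and $r''E_\mu$ are disjoint in $\R^2\rtimes_A\{0\}$, each $\phi_s$ is a diffeomorphism mapping $r'E_\mu$ into the interior of $\phi_s(r''E_\mu)$, and $\phi_s(r'C_\mu)\cap\phi_{s'}(r''C_\mu)=\emptyset$ for $s\neq s'$ since these sets sit at heights $s$ and $s'$ respectively, so $\partial\Omega(r')\cap\partial\Omega(r'')=\emptyset$. (b) $\bigcup_{r'\ge r}\Omega(r')=X$: any $p=(\mathbf{p}_0,z_0)\in X$ satisfies $p=\phi_{z_0}(e^{-z_0A}\mathbf{p}_0,0)$, and the horizontal point $(e^{-z_0A}\mathbf{p}_0,0)$ lies in $r'E_\mu$ for all $r'$ sufficiently large.

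Now suppose for contradiction that $M$ is a compact immersed minimal surface in $X$ with $\partial M\subset\Omega(r)$ but $M\not\subset\Omega(r)$, and define
\[
r^{*}=\inf\{r'\ge r : M\subset\Omega(r')\}.
\]
By compactness of $M$ and property (b), this infimum is finite; property (a) and the continuity of the nesting give $M\subset\Omega(r^{*})$ together with the existence of a point $p^{*}\in M\cap\partial\Omega(r^{*})$ where $M$ first touches the expanding barrier. Since $M\not\subset\Omega(r)$ we have $r^{*}>r$, and then property (a) forces $\partial M\subset\Omega(r)\subset\mathrm{int}(\Omega(r^{*}))$; consequently $p^{*}$ is an interior point of $M$.

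At the interior tangency $p^{*}$, the minimal surface $M$ lies locally on the $\Omega(r^{*})$-side of $\partial\Omega(r^{*})$, and by Proposition~\ref{Plateau2} the surface $\partial\Omega(r^{*})$ has strictly negative mean curvature with respect to its outward unit normal, i.e.\ its mean curvature vector points into $\Omega(r^{*})$. Expressing both surfaces as graphs over the common tangent plane $T_{p^{*}}\partial\Omega(r^{*})$ in the outward normal direction, the interior maximum principle yields the desired contradiction: a minimal graph cannot touch from below (in the outward direction) a strictly mean convex graph at an interior tangent point. The only sensitive point in the argument is that the sign convention in the barrier inequality be correct (namely mean convex, not mean concave), and this is precisely what is guaranteed by Proposition~\ref{Plateau2}; once that is in hand, the rest of the proof is a routine sweepout.
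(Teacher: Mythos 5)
Your argument is correct and is exactly the ``standard mean curvature comparison argument'' the paper invokes: it rests on the same two facts (strict mean convexity of each $\partial\Omega(r')$ from Proposition~\ref{Plateau2}, and the nested family $\{\Omega(r')\}_{r'\ge r_0}$ whose boundaries foliate $X-\Omega(r_0)$), which you simply flesh out with the first-touching-point sweepout and the interior maximum principle, with the sign of the comparison handled correctly. The only blemish is notational: in step (a) you mean that the boundary ellipses $r'C_\mu$ and $r''C_\mu$ are disjoint (the disks $r'E_\mu\subset r''E_\mu$ are nested, not disjoint).
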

\begin{proof}
It is a consequence of a standard mean curvature comparison argument
based on the following two facts:
 \begin{enumerate}[$\bullet $]
 \item For every $r\geq r_0$, $\Omega(r)$ has
 mean convex boundary by Proposition~\ref{Plateau2}, and
 $\Omega(r_0)\subset\Omega(r)$.
\item The collection of boundaries $\{ \partial \Omega (r)\ | \ r\geq r_0\} $ forms a codimension-one foliation
of $X-\Omega (r_0)$.
\end{enumerate}
\end{proof}
Finally, from Proposition~\ref{prop5.1}, Theorem~\ref{ass:box5} and Proposition~\ref{ass:box2}, we can conclude the desired
radius estimate for compact stable minimal surfaces in metric semidirect products stated in Theorem~\ref{th:intro}:

\begin{theorem}\label{corrad2}
Let $X=\R^2\rtimes_A \R$ be a metric semidirect
product. Given $r>0$ and any vertical geodesic $\G\subset X$, there exists a
positive number $\Lambda(r)>0$ such that the following property holds: for any compact stable
minimal surface $M$ in $X$ such that all points of its boundary $\parc M$ are at distance at
most $r$ from $\Gamma$, the radius of $M$ is at most $\Lambda(r)$.
\end{theorem}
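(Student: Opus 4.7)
The strategy is to first confine $M$ inside a metric solid cylinder $\mathcal{W}(\Gamma, R')$ whose radius $R' = R'(r)$ depends only on $r$, and then derive the radius bound by a limit-and-contradiction argument invoking Corollary~\ref{corrad}. After left translating by an ambient isometry, I may assume $\Gamma$ is the $z$-axis.

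The confinement step splits into two cases. If $X$ is unimodular, or non-unimodular with Milnor $D$-invariant strictly less than $1$, Proposition~\ref{prop5.1} directly produces $j \in \N$ with $M \subset \mathcal{W}(\Gamma, jr)$, and I set $R' = jr$. Otherwise $X$ is non-unimodular with $A = A(\alpha, \beta)$, $\alpha \in [0,1)$ and $\beta \geq 0$, so Proposition~\ref{Plateau2} furnishes $\mu > 0$ and $r_0 > 0$ making the $F_3$-invariant solid cylinders $\Omega(r')$ over the homothetic ellipses $r'E_\mu$ strictly mean convex for all $r' \geq r_0$. Both $\mathcal{W}(\Gamma, r)$ and $\Omega(r')$ are invariant under the $1$-parameter group $\{\phi_s\}$ of left translations along $\Gamma$. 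Choosing $r'_* = r'_*(r) \geq r_0$ large enough that the (compact) horizontal cross-section $\mathcal{W}(\Gamma, r) \cap [\R^2 \rtimes_A \{0\}]$ fits inside $r'_* E_\mu$, the inclusion $\mathcal{W}(\Gamma, r) \subset \Omega(r'_*)$ follows from the joint $\phi_s$-invariance. Proposition~\ref{ass:box2} then gives $M \subset \Omega(r'_*)$, which in turn lies inside some metric cylinder $\mathcal{W}(\Gamma, R')$ because $r'_* E_\mu$ is bounded in the intrinsically flat plane $\R^2 \rtimes_A \{0\}$.

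To conclude, suppose for contradiction there is a sequence $M_n$ of compact stable minimal surfaces with $\partial M_n \subset \mathcal{W}(\Gamma, r)$ and $\mathrm{radius}(M_n) \to \infty$. By the confinement step, each $M_n \subset \mathcal{W}(\Gamma, R')$. Let $p_n \in M_n$ realize the radius; the Schoen--Ros curvature estimates for stable minimal surfaces give a uniform bound on the norm of the second fundamental form of $M_n$ on the intrinsic balls $B_{M_n}(p_n, \mathrm{radius}(M_n)/2)$. Left-translating by $\phi_{-z(p_n)}$ (an isometry preserving $\Gamma$ and hence $\mathcal{W}(\Gamma, R')$) places $p_n$ in the compact set $\mathcal{W}(\Gamma, R') \cap [\R^2 \rtimes_A \{0\}]$, and after extracting a subsequence $p_n \to q_\infty$, a standard limit extraction (as in the proof of Theorem~\ref{ass:box5}) yields a nontrivial complete immersed stable minimal surface $M_\infty$ through $q_\infty$ contained in $\mathcal{W}(\Gamma, R')$, contradicting Corollary~\ref{corrad}.

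The main obstacle lies in the second case of the confinement step, namely verifying the containment $\mathcal{W}(\Gamma, r) \subset \Omega(r'_*)$: this requires both the compactness of the horizontal cross-section (which reduces, via the $\phi_1$-quotient of $X$, to the properness of the embedding $\R^2 \rtimes_A \{0\} \hookrightarrow X/\phi_1$) and a careful bookkeeping of the joint $\phi_s$-invariance of the two regions. Once the confinement $M \subset \mathcal{W}(\Gamma, R')$ is established, the final limit extraction is routine for stable minimal surfaces and is almost verbatim the argument already carried out in the proof of Theorem~\ref{ass:box5}.
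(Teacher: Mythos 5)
Your proof is correct and follows essentially the same route as the paper: the identical confinement step (Proposition~\ref{prop5.1} in the unimodular/$D<1$ cases, Propositions~\ref{Plateau2} and~\ref{ass:box2} otherwise, with the elementary cross-section argument justifying $\cW(\G,r)\subset \Omega(r'_*)\subset\cW(\G,R')$ that the paper only asserts), followed by the radius bound coming from Theorem~\ref{ass:box5} together with the Schoen--Ros curvature estimates. Your endgame merely repackages the paper's direct appeal to Theorem~\ref{ass:box5} as a blow-up/compactness argument against Corollary~\ref{corrad}, which is derived from that same theorem, so the content is the same.
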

\begin{proof}
We first consider the case  where $A$ is as in equation (\ref{Axieta}) with $\alfa\in[0,1)$
and $\beta\in[0,\infty)$.
Let $M$ be a compact stable minimal surface in $X$ whose boundary lies in the closed
solid metric cylinder $\cW(\G ,r)$ of radius $r>0$ in $X$ around a vertical geodesic $\Gamma$.
Note that there
exists some $r'=r'(r)>0$ such that $\cW(\G ,r)\subset \Omega(r')$, where $\Omega(r')$ is the mean
convex solid cylinder given in Proposition~\ref{Plateau2}. Then, by Proposition~\ref{ass:box2}
we deduce that $M\subset \Omega(r')$. As there exists some $r''=r''(r)>0$ such
that $\Omega(r')\subset \cW(\Gamma,r'')$,
we obtain from Theorem~\ref{ass:box5} and the Schoen-Ros curvature estimates for stable minimal
surfaces~\cite{sc3,ros9} the desired radius estimate.

If $X$ is non-unimodular with $A$ given by \eqref{Axieta} and
non-positive Milnor $D$-invariant, or else trace$(A)=0$, then we
apply Proposition~\ref{prop5.1} to conclude that every compact immersed stable
minimal surface in $X$ is contained in some metric cylinder
$\cW(\G,r')$, where $r'$ depends only on $X$ and $r$. Then, as in the previous paragraph,
Theorem~\ref{ass:box5} implies that $M$ has a radius estimate that only depends on $X$ and $r$.
This last observation completes the proof.
\end{proof}

\section{Appendix.}

\begin{proposition}
\label{propos2app}
Let $X$ be a non-unimodular semidirect product
$\R^2\rtimes _A\R $ endowed
with its canonical metric, where
$A\in \mathcal{M}_2(\R )$ is given by~(\ref{Axieta}) for
some constants $\alfa,\beta\geq 0$. Let $D=\det(A)=(1-\alfa^2)(1+\beta^2)$ be the Milnor
$D$-invariant associated to the Lie group $\R^2\rtimes _A\R $.
Then, the following properties hold:
\ben[1.]
\item Given $z\in \R $, the exponential of the matrix $zA$ is equal to
\begin{equation}
\label{eq:exp(zA)2}
e^{zA}=e^z\left[ {\bf C}_D(z)\, I_2+{\bf S}_D(z)(A-I_2)\right] ,
\end{equation}
where $I_2\in \mathcal{M}_2(\R )$ is the identity matrix and
\begin{equation}
\label{eq:S,C}
\mathbf{C}_D(t)=\left\{ \begin{array}{cl}
\cosh (\sqrt{1-D}\, t) & \mbox{ if } D<1,\\
1 & \mbox{ if } D=1,\\
\cos (\sqrt{D-1}\, t) & \mbox{ if } D>1,
\end{array}\right.
\
\mathbf{S}_D(t)=\left\{ \begin{array}{cl}
\frac{1}{\sqrt{1-D}}\sinh (\sqrt{1-D}\, t) & \mbox{ if } D<1,\\
t & \mbox{ if } D=1,\\
\frac{1}{\sqrt{D-1}}\sin (\sqrt{D-1}\, t) & \mbox{ if } D>1.
\end{array}\right.
\end{equation}
\item The norms of $\partial _x$, $\partial _y$
and their inner product with respect to
the canonical metric are
\[
\begin{array}{rcl}
\| \partial _x\| ^2&=&e^{-2z}\left\{ \beta^2(1+\alfa)^2{\bf S}_D(z)^2+\left[
{\bf C}_D(z)-\alfa\, {\bf S}_D(z)\right] ^2 \right\}
\\
\rule{0cm}{.5cm}
 \| \partial _y\| ^2&=&e^{-2z}\left\{ \beta^2(1-\alfa)^2{\bf
S}_D(z)^2+\left[ {\bf C}_D(z)+\alfa\, {\bf S}_D(z)\right] ^2 \right\}
\\
\rule{0cm}{.5cm}
\langle \partial _x,\partial _y\rangle &=&-2\alfa\beta e^{-2z}\,
{\bf S}_D(z)\left[ {\bf S}_D(z)+{\bf C}_D(z)\right] .
\end{array}
\]
In particular:
\ben[2a.]
\item if $D>0$ then $\| \partial _x\| ^2,\| \partial _y\|
^2,\langle \partial _x,\partial _y\rangle$ decay exponentially as
$z\to +\infty $, and so, the norm of every horizontal right
invariant  vector field in $X$ decays exponentially as $z\to +\infty
$ as well.
\item If $D<1$, then $A$ is diagonalizable with distinct eigenvalues $\l
_{\pm }=1\pm \sqrt{1-D}$. Let $v_+,v_-\in \R^2$ be unitary
eigenvectors of $A$ associated to $\l _+,\l _-$, and let $V_+,V_-$
be the horizontal right invariant vector fields in $X$ determined by
$V_{\pm }(\vec{0})=v_{\pm }$. Then, $\| V_{\pm }\|
(x,y,z)=e^{-\l _{\pm }z}$ for all $(x,y,z)\in X$.
 \end{enumerate}
\end{enumerate}
\end{proposition}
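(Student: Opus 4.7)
The strategy is to prove item~1 by an algebraic computation invoking Cayley--Hamilton, and then deduce item~2 together with both of its corollaries by elementary matrix manipulations combined with the metric formula~(\ref{eq:13}).

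For item~1, since $A$ has trace $2$ and determinant $D$, Cayley--Hamilton yields $A^2 - 2A + D\, I_2 = 0$, equivalently $(A - I_2)^2 = (1-D)\, I_2$. Because $I_2$ commutes with $A - I_2$, I would factor $e^{zA} = e^z\, e^{z(A-I_2)}$ and expand the second factor as a power series. The even powers of $A - I_2$ collapse to $(1-D)^k\, I_2$ and the odd powers to $(1-D)^k(A - I_2)$, so the resulting series splits into a term multiplying $I_2$ and a term multiplying $A - I_2$. These are precisely $\mathbf{C}_D(z)$ and $\mathbf{S}_D(z)$ as defined in~(\ref{eq:S,C}), where the case analysis on the sign of $1-D$ produces the hyperbolic, polynomial, or trigonometric expressions.

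For item~2, I would plug item~1 into the entries of $e^{-zA}$ (using that $\mathbf{C}_D$ is even and $\mathbf{S}_D$ is odd), together with the explicit form of $A - I_2$ coming from~(\ref{Axieta}), to read off the four functions $a_{ij}(-z)$. Substituting into~(\ref{eq:13}) then gives the stated expressions for $\|\partial _x\|^2$, $\|\partial _y\|^2$, and $\langle \partial _x, \partial _y\rangle$; the computation for the cross term simplifies after factoring out $\beta\, \mathbf{S}_D(z)$ and collecting coefficients, where the cancellation in the $\mathbf{C}_D(z)$-coefficient uses that $(1-\alpha) - (1+\alpha) = -2\alpha$.

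The corollaries follow from the identity $\|p\, \partial _x + q\, \partial _y\|^2_{(x,y,z)} = \|e^{-zA}\mathbf{v}\|^2$ with $\mathbf{v} = (p,q)^T$, which is immediate from the fact that the components of $\partial _x$ and $\partial _y$ in the orthonormal frame $\{E_1, E_2\}$ are the columns of $e^{-zA}$. For item~2b, if $\mathbf{v} = v_\pm$ is a unit eigenvector of $A$ with eigenvalue $\lambda _\pm$, then $e^{-zA}v_\pm = e^{-z\lambda _\pm}v_\pm$, and taking norms yields $\|V_\pm\|(x,y,z) = e^{-z\lambda _\pm}$. For item~2a, a spectral analysis of $A$ shows that when $D > 0$ the eigenvalues of $A$ all have positive real parts: real eigenvalues $1 \pm \sqrt{1-D}$, both positive since $\sqrt{1-D} < 1$, if $0 < D < 1$; a double eigenvalue $1$ if $D = 1$; or the conjugate pair $1 \pm i\sqrt{D-1}$ if $D > 1$. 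Hence $\|e^{-zA}\|_{\mathrm{op}}$ decays exponentially as $z \to +\infty$ in every case, which proves the claim for every horizontal right invariant vector field. No step presents a genuine obstacle; the proof is essentially a linear-algebra exercise once Cayley--Hamilton is invoked, with the simplification of the cross-term inner product being the only place that requires careful bookkeeping.
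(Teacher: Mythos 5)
Your proposal is correct and follows essentially the same route as the paper: Cayley--Hamilton ($A^2-2A+D\,I_2=0$) underlies item~1 (the paper verifies the formula by checking it solves $f'=Af$, $f(0)=I_2$, while you expand the power series of $e^{z(A-I_2)}$ using $(A-I_2)^2=(1-D)I_2$ --- an equivalent computation), and item~2 with its corollaries is in both cases a direct substitution into~(\ref{eq:13}). The only cosmetic difference is in item~2b, where the paper rotates coordinates so that $v_+$ becomes $\partial_x$ and reads the norm off~(\ref{eq:13}), whereas you use the identity $\|p\,\partial_x+q\,\partial_y\|^2=\|e^{-zA}(p,q)^T\|^2$ together with $e^{-zA}v_{\pm}=e^{-\lambda_{\pm}z}v_{\pm}$; both are valid and equally elementary.
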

\begin{proof}
Since $\mbox{\rm trace}(A)=2$ and $\det (A)=D$, then the characteristic equation for $A$ gives
$A^2-2\, A+D\, I_2=0$.
From here it is straightforward to show that if
we define $f\colon \R \to \mathcal{M}_2(\R )$
by $f(z)=e^z\left[ {\bf C}_D(z)\, I_2+{\bf S}_D(z)(A-I_2)\right] $,
then $f'(z)=Af(z)$ and $f(0)=I_2$
(for this, use that ${\bf S}_D'={\bf C}_D$ and that ${\bf C}_D'=(1-D){\bf S}_D$),
which gives item~1 of the proposition.

The three displayed equalities in item~2 of the proposition are also direct
computations that only use~(\ref{eq:exp(zA)2}) and the
expression~(\ref{eq:13}) of the canonical metric in terms of
$x,y,z$. If $D>0$, then (\ref{eq:S,C}) and the three
displayed equalities in item~2 imply that $\| \partial _x\| ^2,\|
\partial _y\| ^2,\langle \partial _x,\partial _y\rangle$ decay
exponentially as $z\to +\infty $. If $D<1$, then the characteristic
equation of $A$ has two distinct real roots $\l _{\pm }=1\pm
\sqrt{1-D}$, which implies that $A$ is diagonalizable. After a fixed
rotation in the $(x,y)$-plane around the origin (this change of
coordinates does not affect either the Lie group structure in
$\R^2\rtimes _A\R $ or  its canonical metric), we can assume that
$V_+=\partial _x$, i.e.,
\[
A=\left(
\begin{array}{cr}
\l _+ & b \\
0 & \l _- \end{array}\right) \mbox{ and thus, }
e^{zA}=\left(
\begin{array}{cr}
e^{\l _+ z} & a_{12}(z) \\
0 & e^{\l _- z}\end{array}\right)
\]
for certain $b\in \R $ and $a_{12}(z)$ function of $z$.
Thus, (\ref{eq:13}) directly gives that $\| V_+\| (x,y,z)$
$=\| \partial _x\| (x,y,z)=e^{-\l _+z}$. The proof of
 $\| V_-\| (x,y,z)=e^{-\l _-z}$ is analogous.
\end{proof}

\vspace{.3cm} \noindent {\sc Acknowledgments:}
First author's financial support: This material is based upon work for
the NSF under Award No. DMS-1309236. Any opinions, findings, and
conclusions or recommendations expressed in this publication are those
of the authors and do not necessarily reflect the views of the NSF.
Second author's financial support:  Research partially supported by MICINN-FEDER,
Grant No.  MTM2013-43970-P, and Programa de Apoyo a la Investigacion,
Fundacion Seneca-Agencia de Ciencia y Tecnologia
Region de Murcia, reference 19461/PI/14.
Third  author's financial support: Research partially supported by a
{MINECO/FEDER
grant no. MTM2014-52368-P}.

\bibliographystyle{plain}
\bibliography{bill}
\end{document}